\newtheorem{theorem}{Theorem}[section]
\newtheorem{lemma}[theorem]{Lemma}
\newtheorem{corollary}[theorem]{Corollary}
\newtheorem{prop}[theorem]{Proposition}
\theoremstyle{definition}
\newtheorem{definition}[theorem]{Definition}
\newtheorem{remark}[theorem]{Remark}
\newtheorem{remarks}[theorem]{Remarks}
\newtheorem{thm}{Theorem}
\numberwithin{equation}{section}
\newcommand{\Z}{\mathbb{Z}}
\newcommand{\Q}{\mathbb{Q}}
\newcommand{\G}{\mathbb{G}}
\newcommand{\PP}{\mathbb{P}}
\newcommand{\sO}{\mathscr{O}}
\newcommand{\QQ}{\mathbb{Q}}
\newcommand{\ZZ}{\mathbb{Z}}
\newcommand{\calX}{\mathcal{X}}
\newcommand{\sA}{\mathcal{A}}
\newcommand{\sB}{\mathcal{B}}
\newcommand{\sX}{\mathcal{X}}
\newcommand{\sY}{\mathcal{Y}}
\newcommand{\sE}{\mathcal{E}}
\newcommand{\sF}{\mathcal{F}}
\newcommand{\sI}{\mathcal{I}}
\newcommand{\clf}{\text{clfl}}
\newcommand{\fl}{\text{fl}}
\newcommand{\bS}{\mathbb{S}}
\newcommand{\0}{\emptyset}
\newcommand{\op}{\text{op}}
\newcommand{\Ab}{\mathbf{Ab}}
\newenvironment{romenum}
{

\begin{enumerate}}{\end{enumerate}}
\DeclareMathOperator{\SheafHom}{\mathbf{Hom}}
\newcommand{\F}{\mathbf{F}}
\DeclareMathOperator{\Hom}{Hom}
\DeclareMathOperator{\End}{End}
\DeclareMathOperator{\Ell}{Ell}
\DeclareMathOperator{\sEll}{\mathcal{E}\ell\ell}
\DeclareMathOperator{\TT}{T}
\DeclareMathOperator{\Laz}{Laz}
\DeclareMathOperator{\Comm}{Comm}
\DeclareMathOperator{\Td}{Td}
\DeclareMathOperator{\Sm}{Sm}
\DeclareMathOperator{\CH}{CH}
\DeclareMathOperator{\Spec}{Spec}
\DeclareMathOperator{\SH}{SH}
\DeclareMathOperator{\MGL}{MGL}
\DeclareMathOperator{\MU}{MU}
\DeclareMathOperator{\codim}{codim}
\DeclareMathOperator{\OCT}{\mathbf{OCT}}
\DeclareMathOperator{\MCT}{\mathbf{MOCT}}
\DeclareMathOperator{\oct}{oct}
\newcommand{\<}{\langle}
\renewcommand{\>}{\rangle}
\let\@wraptoccontribs\wraptoccontribs
\title[Algebraic elliptic cohomology theory and flops \uppercase\expandafter{\romannumeral1}]%
{Algebraic elliptic cohomology theory and flops \uppercase\expandafter{\romannumeral1}}
\date{\today}
\date{\today}
\author[M.~Levine]{Marc~Levine}
\address{Universit\"at Duisburg-Essen,
Fakult\"at Mathematik, Campus Essen, 45117 Essen, Germany}
\email{marc.levine@uni-due.de}
\author[Y.~Yang]{Yaping~Yang}
\address{Department of Mathematics,
Northeastern University,
Boston, MA 02115, USA}
\email{yang.yap@husky.neu.edu}
\author[G.~Zhao]{Gufang~Zhao}
\address{Department of Mathematics,
Northeastern University, Boston, MA 02115, USA}
\email{zhao.g@husky.neu.edu}
\address{D\'epartement  de Math\'ematiques, Universit\'e Paris-Sud,
B\^at.~425, 91405, Orsay, France}
\email{joel.riou@math.u-psud.fr}
\subjclass[2010]{Primary
55N22, 
55N34; 
Secondary
14E15. 
}
\begin{document}
\keywords{classical flops, algebraic cobordism, elliptic cohomology, Landweber exactness}

\begin{abstract}
We define the algebraic elliptic cohomology theory coming from Krichever's elliptic genus as an oriented cohomology theory on smooth varieties over an arbitrary perfect field. We show that in the algebraic cobordism ring with rational coefficients, the ideal generated by differences of classical flops coincides with the kernel of Krichever's elliptic genus. This generalizes a theorem of B.~Totaro  in the complex analytic setting.
\end{abstract}

\thanks{The last two authors are grateful to Universit\"at Duisburg-Essen for hospitality and excellent working conditions. The first author thanks the Humboldt Foundation for its support.}

\maketitle

\tableofcontents

\section*{Introduction}
The notion of complex oriented cohomology theory in the category of smooth manifolds was introduced by Quillen  \cite{Q} in his study of the universal  complex oriented cohomology theory, complex cobordism. The existence of a formal group law associated to a complex oriented theory allowed Quillen to define an isomorphism of the complex cobordism ring with the Lazard ring $\Laz$, the underlying ring of the universal formal group law $F_{\Laz}(u,v)\in \Laz[\![u,v]\!]$.

Levine and Morel \cite[Definition 1.1.2]{LM} have defined the notion of an oriented cohomology theory on the category of smooth quasi-projective schemes $\Sm_k$ over a perfect field $k$ as an algebraic analog of complex oriented cohomology, without however requiring an excision property or a Mayer-Vietoris sequence. When the underlying field $k$ has characteristic zero, the universal oriented cohomology theory, $X\mapsto \Omega^*(X)$,  exists and is called {\em algebraic cobordism}.  Just as for complex cobordism,  the coefficient ring $\Omega^*(k)$ of  algebraic cobordism is isomorphic to the Lazard ring.  The existence of a universal theory over an arbitrary perfect field is not at present known. We partially fill this gap using motivic homotopy theory to contruct the universal theory with $\Q$-coefficients, relying on the algebraic cobordism spectrum $\MGL$ and the  theorem of Hopkins-Morel, recently established in detail by Hoyois \cite{Hoyois}.

We apply this machinery to give algebraic versions of {\em elliptic cohomology}. Given a family of elliptic curves over some ring $R$, there is a formal group law over the ring $R$, coming from the additive structure of the elliptic curves and a choice of parameter along the zero section over $\Spec R$. The corresponding cohomology theory $X\mapsto R^*(X)$, an algebraic elliptic cohomology theory,  is the main subject of this paper.

Such group laws are often constructed using a so-called {\em elliptic genus} rather than an explicit construction of the family of elliptic curves. When $k = \mathbb{C}$,  a particular elliptic genus has a crucial property, called the rigidity property, proved by Krichever in \cite{K} and H\"{o}hn in \cite{H}.

The elliptic genus also arises in the study of Chern numbers of singular varieties. Inspired by H\"{o}hn's work, Totaro showed in \cite[Theorem 4.1]{T} that the kernel of the elliptic genus coincides with the ideal in the complex cobordism ring $\MU^*(\mathbb{C})$ generated by the differences of classical flops. As a corollary, the characteristic numbers which can be defined for singular varieties in a fashion which is compatible with small resolutions, are exactly the specializations of the elliptic genus. 
Wang proved in \cite[Theorem~D]{W} that the ideal in the complex cobordism ring $\MU^*(\mathbb C)$ generated by differences of
flops equals to the ideal generated by the differences of $K$-equivalent varieties.

It is worth mentioning that in Totaro's work, the proofs take place in the setting of weakly-complex manifolds and use topological constructions which  do not lend themselves to the situation over a field of positive characteristic.

In this paper, we study the algebraic version of the elliptic cohomology  over an arbitrary perfect field, and consider as well as the question of the existence of a corresponding motivic oriented cohomology theory representing elliptic cohomology.

The underlying ring of the elliptic formal group law corresponding to  Krichever's elliptic genus is a certain subring of $\ZZ[a_1,a_2,a_3,\frac{1}{2}a_4]$; there is an explicit descriptions of the four generators $a_i$ (see \cite{BB}) as elements in a formal power series ring $ \QQ(\!(e^{2\pi iz})\!)[\![e^{2i\pi\tau},k]\!]$. In order to make this formal group law Landweber exact, we need to enlarge the coefficient ring. For an integer $n\ge1$, let $\Ell[1/n]=\ZZ[1/n][a_1,a_2,a_3,a_4][\Delta^{-1}]$, where $\Delta$ is the discriminant (see \eqref{discr}). 

For a field $k$, the {\em exponential characteristic} of $k$ is defined to be 1 if $k$ has characteristic zero, and is $\text{char} k$ if  $\text{char} k>0$.

\begin{thm}[Theorem~\ref{thm: ellip_exact}]\label{thm: ellip_exact_intro} Let $k$ be a perfect field of exponential characteristic $p$.
The oriented cohomology theory on $\Sm_k$ sending $X\mapsto \MGL^*(X)\otimes_{\Laz}\Ell[1/2p]$ is  represented by a motivic oriented cohomology theory  on $\Sm_k$.
\end{thm}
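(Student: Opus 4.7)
The plan is to invoke the motivic Landweber exactness theorem of Naumann-Spitzweck-Østvær (the motivic extension of Theorem~\ref{thm: Landweber} cited in the introduction). That theorem reduces the existence of a representing motivic oriented cohomology theory for $X \mapsto \MGL^*(X)\otimes_\Laz \Ell[1/2]$ to the purely algebraic statement that $\Ell[1/2]$, with its $\Laz[1/2]$-algebra structure given by the Krichever elliptic formal group law, is Landweber exact. So the whole problem becomes: verify that for every odd prime $p$ (the prime $2$ being inverted), the sequence $v_0=p,\,v_1,\,v_2,\ldots$ extracted from the $p$-series of the elliptic formal group law acts regularly on $\Ell[1/2]$.

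First I would dispose of the easy cases. Since $\Ell[1/2]=\ZZ[1/2][a_1,a_2,a_3,a_4][\Delta^{-1}]$ is a localization of a polynomial ring, it is torsion-free over $\ZZ[1/2]$, so multiplication by $p$ is injective on $\Ell[1/2]$ for every odd $p$. Next, I would exploit the geometric meaning of $\Spec\Ell[1/2][\Delta^{-1}]$ as a parameter space of smooth elliptic curves: the expression for $\Delta$ in the introduction is, up to a $\ZZ[1/2]$-unit, the discriminant of an elliptic curve in Weierstrass form whose coefficients are polynomials in the $a_i$, and after a change of formal parameter along the zero section the Krichever formal group law is the formal group of this family. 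Under this identification, $v_1 \bmod p$ is (up to a unit) the Hasse invariant of the universal elliptic curve over $\Ell[1/2]/p$; since the Hasse invariant is generically non-vanishing on a smooth family, multiplication by $v_1$ is injective on $\Ell[1/2]/p$. Modulo $(p,v_1)$ the base localizes to the supersingular locus, where the elliptic curve has formal group law of exact height $2$; thus $v_2$ becomes a unit, so $\Ell[1/2]/(p,v_1,v_2)=0$, which makes all Landweber conditions for $n\ge 3$ vacuous.

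The main obstacle, in my view, is the geometric identification step: one must give an explicit change of variables exhibiting the Krichever Hirzebruch series $Q(t)$ as producing the formal group law of a Weierstrass elliptic curve whose discriminant is (up to a $\ZZ[1/2]$-unit) the $\Delta$ given in the introduction. This requires a direct manipulation of the Baker-Akhiezer function $\Phi(t,z)$ via the classical addition formula for $\sigma$, showing that the resulting group law is the push-forward of the group law of the underlying elliptic curve along a chosen uniformizer. Once this dictionary is in place the height stratification used above is entirely classical, going back to Landweber-Ravenel-Stong, and the theorem follows by applying the motivic Landweber exactness theorem to $\Ell[1/2]$.
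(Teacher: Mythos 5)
Your overall strategy is the same as the paper's: reduce to Landweber exactness of $(\Ell[1/2],F_{Kr})$ via the Naumann--Spitzweck--{\O}stv{\ae}r theorem, identify the formal group law with that of an explicit Weierstrass family, and then run the classical height stratification ($v_0=l$ regular because the ring is an integral domain, $v_1$ injective mod $l$, $v_2$ a unit mod $(l,v_1)$, and the conditions for $n\ge 3$ vacuous).

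Two points need attention. First, your assertion that ``the Hasse invariant is generically non-vanishing on a smooth family'' is exactly the claim that requires a proof rather than an appeal to a general principle: a smooth family of elliptic curves in characteristic $l$ can consist entirely of supersingular curves, and Remark~\ref{rmk: no_integral_ell} shows this actually happens for the present family at $l=2$ (which is precisely why $2$ must be inverted). For odd $l$ the paper closes this gap by observing that the family $E_l\to\Spec R_l$ contains the Legendre family, hence is dominant onto the moduli space after passing to the separable closure and so has a non-supersingular geometric fiber; since $R_l[\Delta^{-1}]$ is a domain, $v_1$ is then nonzero and hence injective. Some argument of this kind is needed to complete your $v_1$ step. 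Second, the identification of Krichever's formal group law with the formal group of a Weierstrass curve over $R=\ZZ[a_1,a_2,a_3,\tfrac12 a_4]$ with the stated discriminant --- which you correctly single out as the main remaining obstacle --- is not carried out from scratch in the paper either: it is quoted from Buchstaber--Bunkova \cite[Lemma~44]{BB}, where the explicit change of coordinates is given. With those two points supplied, your argument coincides with the paper's proof.
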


Now we can state our main theorem in this paper as follows. Let $\MGL^*_\QQ(k):=\MGL^*(k)\otimes_\ZZ \QQ$ be the algebraic cobordism ring with $\QQ$ coefficients, and $\Ell^*_\QQ(k)$ be the elliptic cohomology ring with $\QQ$ coefficients.
\begin{thm}[Proposition~\ref{prop: flop in ell}, Proposition~\ref{prop: deg<5}, and Corollary~\ref{cor: injModFlop}]\label{thm:FlopMain}
The kernel of the algebraic elliptic genus $\phi_\QQ: \MGL^*_\QQ(k)\to \Ell^*_\QQ(k)$ is generated by the differences of classical flops, and its image is the polynomial ring $\QQ[a_1,a_2,a_3,a_4]$.
\end{thm}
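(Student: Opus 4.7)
The plan is to establish the three substatements in sequence. First I would verify that differences of classical flops lie in $\ker\phi_\QQ$; second, compute the image of $\phi_\QQ$ on enough generators to identify it with $\QQ[a_1,a_2,a_3,a_4]$; and third, upgrade these two statements to an isomorphism $\MGL^*_\QQ(k)/I_{\mathrm{flop}}\cong\QQ[a_1,a_2,a_3,a_4]$ via a graded $\QQ$-dimension count. Throughout, I use the Hopkins-Morel isomorphism $\MGL^*_\QQ(k)\cong\Laz_\QQ$ (valid once the exponential characteristic is invertible), so that the ring-theoretic work can be done uniformly at the level of the Lazard ring.

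For the containment $I_{\mathrm{flop}}\subseteq\ker\phi_\QQ$ (Proposition~\ref{prop: flop in ell}), I would realize an $n$-dimensional classical flop by the universal local model: the projective completion $\PP(\sO_{\PP^r}(-1)^{\oplus(r+1)}\oplus\sO_{\PP^r})$, with the two dual $\PP^r$ subvarieties (the zero section and the infinity section) contracted to give $X$ and $X^+$. The class $[X]-[X^+]$ in $\Laz_\QQ$ can be written explicitly via the projective bundle formula, and then Krichever's genus is applied through its Hirzebruch series $Q(t)=(t/2\pi i)e^{kt}\Phi(t,z)$. The quasi-periodicity and the addition formula of the Baker-Akhiezer function $\Phi(t,z)$ cause the two contributions to cancel, giving the algebraic transcription of the rigidity argument of Krichever and H\"ohn.

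For the image calculation, I would compute $\phi_\QQ([\PP^n])$ for $n=1,2,3,4$ by applying the characteristic power series $Q(t)$ to the tangent bundle $T_{\PP^n}$, extracting the appropriate coefficient of $Q(t)^{n+1}$. These four values lie in degrees $-1,-2,-3,-4$; showing that they are algebraically independent over $\QQ$ identifies the image with $\QQ[a_1,a_2,a_3,a_4]$. Since $\Laz_\QQ$ is polynomial on generators in every negative degree and the target has generators only in degrees $-1,\dots,-4$, this also pins down what the kernel must look like in degree terms.

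Combining the first two steps gives a surjection $\MGL^*_\QQ(k)/I_{\mathrm{flop}}\twoheadrightarrow\QQ[a_1,a_2,a_3,a_4]$, and injectivity (Corollary~\ref{cor: injModFlop}) amounts to the matching upper bound $\dim_\QQ(\Laz_\QQ/I_{\mathrm{flop}})_{-n}\leq\dim_\QQ\QQ[a_1,a_2,a_3,a_4]_{-n}$. For $n\leq 4$ this is a finite linear-algebra check, carried out in Proposition~\ref{prop: deg<5} using a small explicit list of generators and flops. The main obstacle is the case $n\geq 5$: one must exhibit, in every degree, enough classical flops to reduce a new polynomial generator of $\Laz_\QQ$ in degree $-n$ to a polynomial expression in the low-degree generators modulo $I_{\mathrm{flop}}$. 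My approach would be an induction on $n$, producing the needed flops from the local model above combined with products, projective bundles, and blow-ups of smooth centers, in order to mimic Totaro's reduction in complex cobordism at the purely algebraic level of $\Laz_\QQ$. Once these reductions are carried out, the surjection must be an isomorphism, and both statements of the theorem follow simultaneously.
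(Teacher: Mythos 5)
Your overall architecture --- (i) differences of classical flops lie in $\ker\phi_\QQ$, (ii) the image in degrees $\geq -4$ is a polynomial ring on four algebraically independent classes coming from (combinations of) projective spaces, (iii) polynomial generators of $\Laz_\QQ$ in every degree $\leq -5$ already lie in the flop ideal, whence the surjection $\MGL^*_\QQ(k)/\sI_\clf\to\QQ[a_1,a_2,a_3,a_4]$ is an isomorphism --- coincides with the paper's, and steps (ii) and (iii) are in substance what the paper does (it uses H\"ohn's rational combinations $W_1,\dots,W_4$ of products of projective spaces rather than the $[\PP^n]$ themselves, and carries out Totaro's $s^n$-computation for explicit classical flops over $Z=\PP^{n-3}$ in the appendix). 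The genuine problem is step (i).

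You propose to prove $\sI_\fl\subseteq\ker\phi_\QQ$ by realizing a classical flop through a single ``universal local model'' and checking cancellation there. This cannot work as stated. A (classical) flop in the paper's sense is determined by an arbitrary smooth $(n-3)$-fold $Z$ together with two arbitrary rank-$2$ bundles $A,B$ on $Z$, the exceptional loci being the $\PP^1$-bundles $\PP(A)$ and $\PP(B)$ with normal bundles $B\otimes\sO(-1)$ and $A\otimes\sO(-1)$; the class $[X_1]-[X_2]\in\MGL^*_\QQ(k)$ genuinely depends on $(Z,A,B)$ --- by Proposition~\ref{prop:Diff_Omega} it is the pushforward from $\MGL^*(Z)$ of an explicit symmetric expression in the Chern roots of $A$ and $B$ --- so no single model variety controls the whole ideal. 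What is needed, and what your proposal omits, is a mechanism converting the birational data into a universal formula for $[X_1]-[X_2]$: the paper uses the double point relation of Levine--Pandharipande to obtain the blow-up formula by deformation to the normal cone, writes $[X_1]-[X_2]$ as a difference of iterated projective bundles over $Z$, and evaluates each by Quillen's pushforward formula. (Incidentally, your model $\PP(\sO_{\PP^r}(-1)^{\oplus(r+1)}\oplus\sO_{\PP^r})$ with two contracted dual $\PP^r$'s is the local picture of a Mukai-type $(r,r)$ flop, not of the classical flop used here, whose singular locus has codimension $3$.) Finally, the appeal to ``quasi-periodicity and the addition formula of $\Phi$'' must be replaced by an actual identity: once the difference is written out in terms of $\lambda^{-1}$ of the Chern roots, the vanishing is precisely the $n=4$ case of the classical identity $\sum_{r}\prod_{j}\sigma(x_r-y_j)\big/\prod_{j\neq r}\sigma(x_r-x_j)=0$ valid when $\sum_r x_r=\sum_r y_r$. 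Without the explicit formula for the difference there is nothing to which this identity can be applied, so step (i) as proposed has a real hole.
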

The basic idea underlying our approach is to use the {\em double-point cobordism} of Levine-Pand\-hari\-pande \cite{LP} to give a simple explicit description of the difference of two flops in the algebraic cobordism ring $\MGL^*(k)$ (see Proposition~\ref{prop:Diff_Omega} for a more precise statement), replacing Totaro's topological  constructions. The vanishing of the difference of two flops in elliptic cohomology is then a consequence of a classical identity satisfied by the $\sigma$ function (see Proposition~\ref{prop: flop in ell}).

The paper is organized as follows:  In \S\ref{sec:Cohomology} we recall some foundational material concerning oriented cohomology and motivic oriented cohomology. We show that $\MGL^*_\QQ$ is the universal oriented cohomology theory on $\Sm_k$ in \S\ref{sec:Univ}.   In \S\ref{sec: Landweber} we give the construction of elliptic cohomology as an oriented cohomology theory on $\Sm_k$ for $k$ an arbitrary perfect field, and we prove our main result on the existence of a motivic oriented theory representing elliptic cohomology. In \S\ref{sec:CobordFlop}  we introduce the double-point cobordism, use this theory to give an explicit description of the difference of two flops in $\MGL^*$-theory and show that the difference of two flops vanishes in elliptic cohomology. In \S\ref{sec:FlopIdeal}  we use H\"ohn's  and Totaro's algebraic computations to prove our main result (Theorem~\ref{thm:FlopMain}) on rational elliptic cohomology and its relation to $\MGL^*_\Q$. We conclude in \S\ref{sec:Birat} with an application of algebraic cobordism to the Chern numbers of  smooth projective symplectic varieties.

We include an appendix by Jo\"el Riou in which it is shown that the infinite suspension spectrum of a smooth $k$-scheme is a strongly dualizable object in $\SH(k)$ after inverting the exponential characteristic. This was known for a field of characteristic zero, or for an arbitrary field after passing to $\Q$-coefficients, but the result in this more precise form is apparantly not in the literature.

\section{Oriented cohomology}\label{sec:Cohomology}

\subsection{Oriented cohomology theories and algebraic cobordism}\label{subsec: prelim}
In this subsection we collect preliminary notions and results we will use. The main goal is to fix the notations and conventions.

Recall that a \textit{formal group law} over a commutative ring $R$ with unit is an element $F(u,v)\in R[\![u,v]\!]$ satisfying the following conditions
\begin{enumerate}
  \item $F(u,0)=u$, $F(0,v)=v$;
    \item $F(u,v)=F(v,u)$;
  \item $F(F(u,v),w)=F(u,F(v,w))$.
\end{enumerate}
Lazard \cite{L}  pointed out the existence of a universal formal group law $(\Laz, F_{\Laz})$.
  We give $\Laz$ the grading so that, with $u$ and $v$ of degree $+1$, the power series $F(u,v)$ is homogeneous of degree 1, and we will always consider formal group laws with coefficients in a graded ring $R$, so that the associated classifying homomorphism $\phi_{(F,R)}:\Laz\to R$  preserves the grading.

Let $k$ be a field, and $\Sm_k$ be the category of smooth, quasi-projective schemes over $k$.
Let $\Comm$ denote the category of commutative, graded rings with unit.
\begin{definition}[ \hbox{\cite[Definition 1.1.2]{LM}}]\label{def: cohom_th}
An oriented cohomology theory on $\Sm_k$ is given by:
\begin{description}
  \item[D1]  A functor $A^*:\Sm_k^\op\to \Comm$, $X\mapsto A^*(X)$, $(f:Y\to X)\mapsto f^*:A^*(X)\to A^*(Y)$.
  \item[D2]   For any projective morphism $f: Y\to X$ in $\Sm_k$ of relative codimension $d$, a homomorphism of graded $A^*(X)$-modules $f_*: A^*(Y) \to A^{*+d}(X)$.
\end{description}
These data satisfy the axioms described in  \cite[\hbox{\it loc. cit.}]{LM}.

A morphism $f:A^*\to B^*$ of oriented cohomology theories is a natural transformation of the functors in (D1), which is also natural with respect to the maps $f_*$ in (D2). 
\end{definition}
For $p:X\to \Spec k$ a smooth projective $k$-scheme of dimension $d$, we have the unit $1_X\in A^0(X)$ and its push-forward $[X]:=p_{X*}(1_X)\in A^{-d}(k)$.

For $L\to X$ a line bundle over some $X\in \Sm_k$, the first Chern class of $L$ in the theory $A$, $c_1^A(L)$, is defined by $c_1^A(L):=s^*s_*(1_X)$, where $s:X\to L$ is the zero section.  For an oriented cohomology theory $A^*$, there is a unique power series $F_A(u,v)\in A^*(k)[\![u,v]\!]$ satisfying
\[
F_A(c_1(L),c_1(M))=c_1(L\otimes M)
\]
for each pair line bundles $L$ and $M$ on a scheme $X\in \Sm_k$; moreover,   $F$ defines a formal group law over the ring $A^*(k)$  \cite[Lemma 1.1.3]{LM}. We denote the ring $A^*(X)\otimes\QQ$ simply by $A^*_\QQ(X)$.

In case $k$ has characteristic zero, the universal oriented cohomology theory $\Omega^*$ has been constructed by Levine and Morel.
\begin{theorem}[\hbox{\cite[Theorem 4.3.7, Theorem 7.1.3]{LM}}]\label{thm:OmegaUniv}
Assume the base field $k$ has characteristic zero.
\begin{enumerate}[i)]
  \item There is a universal oriented cohomology theory $\Omega^*$  on $\Sm_k$.
  \item The canonical ring homomorphism $\Laz \to \Omega^*(k)$ induced by the formal group law $F_\Omega$ of the algebraic cobordism $\Omega^*$ is an isomorphism.
\end{enumerate}
\end{theorem}
For $k$ of characteristic zero and $A$ an oriented cohomology theory on $\Sm_k$, we let
\begin{equation}\label{eqn:classify}
\Theta_A:\Omega^*\to A^*
\end{equation}
be the classifying map given by Theorem~\ref{thm:OmegaUniv}(i).

When the base field $k$ has positive characteristic,  there is at present no construction of a universal oriented theory known. However, the construction of $\Omega^*(X)$ in \cite{LM} leads to a notion of a ``universal oriented Borel-Moore $\Laz$-functor on $\Sm_k$ of geometric type'' (see \cite[Definition 2.2.1]{LM}), and $\Omega^*$ is the universal such theory \cite[Theorem 2.4.13]{LM}.

We recall Quillen's formula:
\begin{theorem}[Quillen]
Let $X$ be a smooth quasi-projective variety, over a field $k$,  $V$ an $n$-dimensional vector bundle on $X$, and $\pi: \PP_X(V) \to X$ be the corresponding projective bundle. Let $A$ be an oriented cohomology theory on $\Sm_k$ with formal group law $F_A$ and take $f(t) \in A^*(X)[\![t]\!]$. Then
\begin{equation}\label{Quill-formula}
\pi_*(f(c_1(\sO(1))))=\sum_i\frac{f(-_{A}\lambda_i)}{\prod_{j\neq i}(\lambda_j-_{A}\lambda_i)},
\end{equation}
where $\lambda_i$ are the Chern roots of $V$,  $x+_Ay:=F_A(x, y)$ and $-_Ax$ is the inverse for $F_A$.
\end{theorem}
A proof of this theorem in the context of complex cobordism can be found in \cite[page 50]{V}. The proof goes through word for word in our setting, so we will not repeat it here.

\subsection{Motivic oriented cohomology theories}

In positive characteristic, the universal oriented cohomology theory is not available. In \S\ref{sec:Univ} we will use  motivic homotopy theory to construct the universal theory with $\Q$-coefficients; we recall some of the basic notions here; for details on the constructions and basic properties we use, we refer the reader to  \cite{RO, MV, PPR1, VoevICM}.

Let $\SH(k)$ be the motivic stable homotopy category of $\PP^1$-spectra. One has the {\em infinite $\PP^1$-suspension functor} $\Sigma^\infty_{\PP^1}(-)_+:\Sm_k\to \SH(k)$ and suspension functors 
\[
\Sigma^n_{S^1}, \Sigma_{\G_m}^n, \Sigma_{\PP^1}^n:\SH(k)\to \SH(k)
\]
for all $n\in \ZZ$. $\SH(k)$ is a triangulated tensor category with translation $\Sigma_{S^1}$,  tensor product $(\sE, \sF)\mapsto \sE\wedge \sF$ and unit the motivic sphere spectrum $\bS_k:=\Sigma^\infty_{\PP^1}\Spec k_+$.  An object $\sE$ of $\SH(k)$ and integers $n,m$ define a functor $\sE^{n,m}:\Sm_k^\op\to \Ab$ by
\[
\sE^{n,m}(X):=\Hom_{\SH(k)}(\Sigma^\infty_{\PP^1} X_+, \Sigma_{S^1}^{n-m}\Sigma_{\G_m}^m \sE).
\]

We use the notions of a {\em commutative $\PP^1$ ring spectrum}, $(\sE, \mu:\sE\wedge\sE\to \sE, \epsilon:\bS_k\to \sE)$ and that of a (Chern) orientation $\vartheta\in \sE^{2,1}(\PP^\infty/0)$ from  \cite[\S1]{PPR2}.
 
These notions may be extended slightly to the setting of   {\em weak} commutative $\PP^1$ ring spectra and  oriented weak commutative ring spectra. Here, a weak commutative $\PP^1$ ring spectrum is a triple $(\sE, \mu:\sE\wedge\sE\to \sE, \epsilon:\bS_k\to \sE)$ as above that defines a commutative monoid object in $\SH(k)/\text{ph}$, where $\text{ph}$ is the two-sided ideal of {\em phantom maps} in $\SH(k)$, that is, maps $f:\sA\to \sB$ in $\SH(k)$ such that $f\circ i=0$ for all maps $i:K\to \sA$, where $K$ is a compact object in $\SH(k)$; see \cite[\S8, 1st para.]{NSO}. An orientation is the same as for commutative ring spectra, in other words, $\vartheta\in \sE^{2,1}(\PP^\infty/0)$ is an element whose restriction to 
$\sE^{2,1}(\PP^1/0)$ is the image of the unit $\epsilon\in \sE^{0,0}(k)$ via the suspension isomorphism $\sE^{0,0}(k)\cong
\sE^{2,1}(\PP^1/0)$. This leads to the following definition.

 \begin{definition} A {\em motivic oriented cohomology theory  on $\Sm_k$} is a weak commutative $\PP^1$ ring spectrum $(\sE,\mu, \epsilon)$ in $\SH(k)$  together with an orientation $\vartheta\in \sE^{2,1}(\PP^\infty/0)$.  
A morphism of motivic oriented cohomology theories 
\[
\phi:(\sE,\mu, \epsilon,\vartheta)\to(\sE',\mu', \epsilon',\vartheta') 
\]
 is a morphism  $\phi:\sE\to \sE'$ in $\SH(k)$ that defines a map of monoid objects in $\SH(k)/\text{ph}$ and satisfies 
 $\phi_*(\vartheta)=\vartheta'$ in $\sE^{\prime 2,1}(\PP^\infty/0)$.
\end{definition}

Relying on \cite[Theorem 3.5]{PaninSmirnov}, it is described in \cite[\S2]{PPR2} how an orientation $\vartheta$ for a  commutative $\PP^1$ ring spectrum $\sE\in \SH(k)$  gives rise to functorial pushforward maps $f_*:\sE^{a,b}(Y)\to \sE^{a-2d, b-d}(X)$ for each projective morphism $f:Y\to X$ in $\Sm_k$, where $d$ is the relative dimension of $f$. In addition, the data of the contravariant functor $X\mapsto \sE^*(X):=\sE^{2*,*}(X)$ from $\Sm_k$ to $\Comm$, together with the maps $f_*$,   define an oriented cohomology theory on $\Sm_k$. Moreover, this construction is functorial in the motivic oriented theory: if  $\phi:(\sE,\mu_\sE,1_\sE,\vartheta_\sE)\to (\sF,\mu_\sF,1_\sF,\vartheta_\sF)$ is a morphism of motivic oriented cohomology theories, then the induced natural transformation $X\mapsto \phi_X:\sE^*(X)\to \sF^*(X)$, $X\in \Sm_k$, defines a morphism of oriented cohomology theories.

 The arguments used in \cite[\hbox{\it loc. cit.}]{PaninSmirnov} use only the structure of $\sE$ as a cohomology theory on pairs $(X, U)$, $U\subset X$ open, $X\in \Sm_k$, so the constructions and arguments work just as well for oriented weak commutative $\PP^1$ ring spectra. 
 
Thus, letting $\OCT(k)$ denote the category of oriented cohomology theories on $\Sm_k$ and $\MCT(k)$ the category of motivic oriented cohomology theories on $\Sm_k$, we have the functor
 \[
 \oct:\MCT(k)\to\OCT(k).
 \]
 sending $(\sE, 1, \mu,\vartheta)$ to $\sE^*$ and $\phi:(\sE,\mu_\sE,1_\sE,\vartheta_\sE)\to (\sF,\mu_\sF,1_\sF,\vartheta_\sF)$ to $\phi:\sE^*\to\sF^*$.
 
 We will simplify the notation by dropping auxiliary data, writing for instance $(\sE,\vartheta)$ for a motivic oriented cohomology theory, $A^*$ for an oriented cohomology theory and $\sE^*$ for the oriented cohomology theory $\oct((\sE,\vartheta))$; we may even drop the orientation $\vartheta$ from the notation if the context makes the meaning clear. For a motivic oriented cohomology theory $(\sE,\vartheta)$, we let $\phi_\sE:\Laz\to \sE^*(k)$ be the homomorphism classifying the formal group law of the oriented cohomology theory $\sE^*$. We say that a  motivic oriented cohomology theory $(\sE,\vartheta)$ {\em represents} an oriented cohomology theory $A^*$ if there is an isomorphism of oriented cohomology theories $\alpha:A^*\to \sE^*$. 

\begin{remarks}\label{rem:MGLOrientedCoh}\ \\
1. The notion of a  motivic oriented cohomology theory on $\Sm_k$ is referred to as an ``oriented ring cohomology theory'' in  \cite{PPR2}. We find this too similar to the term ``oriented cohomology theory'', hence our relabelling.\\
2.  The algebraic cobordism $\PP^1$-spectrum $\MGL \in \SH(k)$ with its canonical orientation $\vartheta_{\MGL}$ is the universal motivic oriented cohomology theory  on $\Sm_k$. See \cite{PPR1, VoevICM} for the construction of $(\MGL,\vartheta_{\MGL})$ and \cite{PPR2}  for the proof of universality\footnote{In \cite{PPR2} the more restrictive notion of motivic oriented cohomology theory is used, in that the objects are assumed to the commutative ring spectra rather than weak commutative ring spectra.  As pointed out in \cite[\S1, comments after Theorem 1.4]{L15}, the proofs and results of \cite{PPR2}  carry over for
 oriented weak commutative $\PP^1$ ring spectra without change, as all the arguments use only the resulting cohomology theories on pairs of smooth schemes of finite type}.\\
3. Let $k$ be a field of characteristic zero. 
Since $\MGL^*$ is an oriented cohomology theory,  we have the canonical comparison morphism  \eqref{eqn:classify} of oriented cohomology theories on $\Sm_k$,  $\Theta_{\MGL}: \Omega^*\to \MGL^*$, which is an isomorphism by  \cite[Theorem 3.1]{Lev_compare}. Thus, for a field of characteristic zero,  $\MGL^*$ is the universal oriented cohomology theory on $\Sm_k$.
\end{remarks}

\section{Universality}\label{sec:Univ} We have noted that $\MGL^*$ (or $\Omega^*$)  is the universal oriented cohomology theory on $\Sm_k$, if $k$ has characteristic zero. In this section, we prove a number of weaker universality statements for an arbitrary perfect field $k$.

\subsection{Specialization of the formal group law}

From the oriented cohomology theory $\MGL^*$, and a given formal group law $F(u,v)\in R[\![u,v]\!]$ such that the exponential characteristic $p$ of $k$ is invertible in $R$, we may construct an oriented cohomology theory $R^*$ on $\Sm_k$ with $R^*(k)=R$ and formal group law $F$ as follows.

For  $X\in \Sm_k$, pullback by the structure map makes $ \MGL^*(X)$ an $\MGL^*(k)$-algebra; the pull-back maps $f^*:\MGL^*(X)\to \MGL^*(Y)$ are  $\MGL^*(k)$-algebra homomorphisms and the projective push forward maps $f_*$ are $\MGL^*(k)$-module homomorphisms. Via  classifying map $\phi_{\MGL}:\Laz\to \MGL^*(k)$, we may thus form   the tensor product ring $R^*(X):=\MGL^*(X)\otimes_{\Laz}R$.  
 
\begin{lemma} Let $k$ be a perfect field, $F\in R[\![u,v]\!]$ a formal group law over a commutative (graded) ring $R$. Assume that the exponential characteristic of $k$ is invertible in $R$. Then there is an oriented cohomology theory $R^*$ on $\Sm_k$ with $R^*(X)=\MGL^*(X)\otimes_{Laz}R$. Moreover, $R^*(k)=R$ and $R^*$ has formal group law $F$. Finally, if the characteristic of $k$ is zero, then $R^*$ is the universal oriented cohomology theory on $\Sm_k$ with formal group law $F\in R[\![u,v]\!]$.
\end{lemma}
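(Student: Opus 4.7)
My plan is to follow the outline sketched in the paragraph preceding the statement: set $R^*(X):=\MGL^*(X)\otimes_{\Laz}R$, where $\Laz$ acts on $\MGL^*(X)$ via $p_X^*\circ\phi_{\MGL}$ and on $R$ via $\phi_F$, and define pullbacks and pushforwards for $R^*$ by tensoring those of $\MGL^*$ with $R$ over $\Laz$. The three things to verify are then (a) the axioms of Definition~\ref{def: cohom_th}, (b) that $R^*(k)=R$ with formal group law $F$, and (c) the universality in characteristic zero.

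For (a), the key observation I would exploit is that the pullbacks and pushforwards in $\MGL^*$ are $\Laz$-linear (pullback because it is a ring map of $\MGL^*(k)$-modules, pushforward by the projection formula). Every axiom --- functoriality, base change along transverse squares, the projective bundle formula and extended homotopy --- is therefore an equality of $\Laz$-linear maps which passes through the functor $-\otimes_{\Laz}R$. For the projective bundle formula one additionally tensors the polynomial identity $\xi^n+\sum_{i=1}^n p^*(c_i(E))\xi^{n-i}=0$ in $\MGL^*(\PP(E))$ to its image in $R^*(\PP(E))$, producing the corresponding $R^*(X)$-basis.

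For (b), the identification $R^*(k)=R$ is the one place where the hypothesis that the exponential characteristic $p$ is invertible in $R$ genuinely enters: by Hoyois, $\phi_{\MGL}[1/p]:\Laz[1/p]\to\MGL^*(k)[1/p]$ is an isomorphism, and since $p$ is invertible in $R$ we obtain
\[
R^*(k)=\MGL^*(k)\otimes_{\Laz}R\cong \MGL^*(k)[1/p]\otimes_{\Laz[1/p]}R\cong R.
\]
For the formal group law, $c_1^{R^*}(L)=s^*s_*(1_X)$ is by construction the image of $c_1^{\MGL}(L)$, and the identity $c_1^{\MGL}(L\otimes M)=F_{\MGL}(c_1^{\MGL}(L),c_1^{\MGL}(M))$ then maps to $c_1^{R^*}(L\otimes M)=F(c_1^{R^*}(L),c_1^{R^*}(M))$, since $F_{\MGL}=\phi_{\MGL}(F_{\Laz})$ while the $\Laz$-structure on $R^*(X)$ factors through $\phi_F$.

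For (c), in characteristic zero one has $\Omega^*\cong\MGL^*$. Given any oriented cohomology theory $A^*$ on $\Sm_k$ equipped with a ring map $R\to A^*(k)$ whose composite with $\phi_F$ equals the classifying map $\Laz\to A^*(k)$, the Levine--Morel classifying morphism \eqref{eqn:classify} yields $\Theta_A:\MGL^*\to A^*$, which is $\Laz$-linear; since the $\Laz$-action on $A^*$ factors through $R$, $\Theta_A$ descends uniquely to $R^*\to A^*$. I expect the main obstacle to be only bookkeeping: reformulating each axiom of $\MGL^*$ as an equality of $\Laz$-linear maps so that it survives tensoring, with the projective bundle formula and extended homotopy needing the most attention. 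No input deeper than Hoyois's theorem and Levine--Morel universality is required.
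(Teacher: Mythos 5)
Your proposal is correct and follows essentially the same route as the paper: the paper performs exactly this construction (tensoring $\MGL^*$ with $R$ over $\Laz$, observing that $-\otimes_{\Laz}R$ is additive and preserves the identities expressing the axioms, and invoking the Hoyois isomorphism $\Laz[1/p]\cong\MGL^*(k)[1/p]$ for $R^*(k)=R$), and its written proof addresses only the universality claim via $\Omega^*\cong\MGL^*$ in characteristic zero, just as you do. Your additional remarks on $\Laz$-linearity of pushforward via the projection formula and on the descent of $\Theta_A$ through $\phi_F$ merely spell out what the paper leaves implicit.
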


\begin{proof} The fact that $X\mapsto R^*(X)$ extends to an  oriented cohomology theory $R^*$ on $\Sm_k$ with $R^*(X)=\MGL^*(X)\otimes_{Laz}R$ follows from the fact that $\MGL^*$ is an an  oriented cohomology theory   on $\Sm_k$, together with  standard properties of the tensor product.

By the main result of \cite{Hoyois}, the map   $\Laz_*\to \MGL_{2*,*}(k)$ is an isomorphism after inverting the exponential characteristic of $k$, and thus $R^*(k):= \MGL^*(k)\otimes_{Laz}R\cong R$ and hence the theory $R^*$ has formal group law $F\in R[\![u,v]\!]$. 

The last statement follows from the fact (remark~\ref{rem:MGLOrientedCoh}) that $\MGL^*$ is the universal oriented cohomology theory on $\Sm_k$ in case $k$  has characteristic zero.
\end{proof}

\subsection{Landweber exactness}
The well-known condition of {\em Landweber exactness} is a sufficient condition for the theory $X\mapsto  R^*(X)$ on $\Sm_k$ to arise from a motivic oriented cohomology theory.

Let  $(R,F)$ be a formal  group law.  For any prime $l>0$, we  write   $[l]\cdot_F x=\sum_{i\geq 1}a_ix^i$, and let $v_n := a_{l^n}$. In particular we have $v_0= a_1= l$.

\begin{definition}
The formal group law $(R,F)$ is said to be Landweber exact if for all primes $l$ and for all integers $n$, the multiplication map 
\[
v_n : R/(v_0,\cdots,v_{n-1})\to R/(v_0,\cdots,v_{n-1})
\]
is injective.
\end{definition}

\begin{theorem}\label{thm: Landweber}
Let $k$ be a perfect field, $(R,F)$ a formal group law. If $(R,F)$ is Landweber exact and the exponential characteristic of $k$ is invertible in $R$, then the oriented cohomology theory $X\mapsto R^*(X)$ on $\Sm_k$ is represented by  a motivic oriented cohomology theory.

More precisely, there is a motivic oriented cohomology theory $(\sE,\vartheta)$ with classifying map $\Theta_\sE:\MGL\to \sE$ and a ring homomorphism $R\to \sE^{2*,*}(k)$  such that the map of oriented cohomology theories $\oct(\Theta_\sE):\MGL^*\to \sE^*$ induces an isomorphism of oriented cohomology theories $\bar\oct(\rho_\sE):R^*:=\MGL^*\otimes_{Laz}R\to \sE^*$
\end{theorem}
The classical precursor of this result  is due to Landweber \cite{Landweber}; in our setting this result follows from  \cite{NSO}. The proof relies in addition on the main result of Appendix~\ref{App:B}, kindly supplied by Jo\"el Riou.

\begin{proof}[Proof of Theorem~\ref{thm: Landweber}] From \cite[Theorem 8.6, Proposition 8.8]{NSO} and the fact that the Tate subcategory of $\SH(\Z)$ is a Brown category \cite[Lemma 8.2]{NSO}, there is a weak commutative ring $\PP^1$ spectrum $\sE\in \SH(k)$, a morphism of  weak ring spectra $\phi:\MGL\to \sE$ and a  map  of graded rings $\rho:R_*\to \sE_{2*,*}(k)$ making the diagram
\[
\xymatrix{
\Laz_*\ar[r]^\rho\ar[d]&R_*\ar[d]\\
\MGL_{2*,*}(k)\ar[r]_\phi&\sE_{2*,*}(k)
}
\]
commute, and so that the induced map $\MGL_{**}(\sF)\otimes_{\Laz} R_*\to \sE_{**}(\sF)$ is an isomorphism for all $\sF$ in $\SH(k)$. We define $\vartheta_\sE$ to be $\phi_*(\vartheta_{\MGL})$, giving us the morphism $\phi:(\MGL,\vartheta_{\MGL})\to (\sE, \vartheta_\sE)$ of motivic oriented cohomology theories.

We claim that  the map $\MGL^{**}(X)\otimes_{\Laz} R^*\to \sE^{**}(X)$ induced by $\phi$ and $\rho$ is an isomorphism for all $X\in \Sm_k$, in particular, $\sE$ represents the oriented cohomology theory $X\mapsto R^*(X)$ on $\Sm_k$. 

For this, let $p$ denote the exponential characteristic of $k$. We have the natural  isomorphism  for $\sA\in \SH(k)$, $X\in \Sm_k$:
$\sA^{a,b}(X)[1/p]\cong \sA_{-a,-b}((\Sigma_T^\infty X_+)^D)[1/p]$,
where $(\Sigma_T^\infty X_+)^D$ is the strong dual of $\Sigma_T^\infty X_+$, which exists (in $\SH(k)[1/p]$) by corollary~\ref{cor:Duality}. As $p$ is invertible in $R^*$, this gives us the commutative diagram
\[
\xymatrix{
\MGL^{*,*}(X)\otimes_{\Laz} R^*\ar[d]_{(\phi,\rho)}\ar[r]^-\sim&\MGL_{-*,-*}((\Sigma_T^\infty X_+)^D)\otimes_{\Laz} R_{-*}\ar[d]^{(\phi,\rho)}\\
\sE^{*,*}(X)\ar[r]_-\sim&\sE_{-*,-*}((\Sigma_T^\infty X_+)^D) 
}
\]
As $(\phi,\rho):\MGL_{-*,-*}((\Sigma_T^\infty X_+)^D)\otimes_{\Laz} R_{-*}\to \sE_{-*,-*}((\Sigma_T^\infty X_+)^D)$ is an isomorphism, so is $(\phi,\rho):\MGL^{*,*}(X)\otimes_{\Laz} R^* \to \sE^{*,*}(X)$.
\end{proof}

We denote the motivic oriented cohomology theory associated to a Landweber exact formal group law $(R, F)$ by $(\MGL\otimes_{\Laz} R,\vartheta_R)$ and the canonical morphism given by the universality of $\MGL$ by $\Theta_{F,R}:(\MGL,\vartheta_{\MGL})\to (\MGL\otimes_{\Laz} R,\vartheta_R)$. We sometimes drop the orientation from the notation if the context makes the meaning clear.

\subsection{Exponential and logarithm}\label{subsec: rational_coeff}
Let $(R,F)$ be the formal group law.
A \textit{logarithm} of the formal group law $F$ is a series $g(u)=u+\sum_{i\ge2}g_iu^i\in R[\![u]\!]$ satisfying the equation \[g(F(u,v))=g(u)+g(v).\] Novikov \cite{N} showed that every formal group law with coefficients in a $\Q$-algebra has a logarithm. The functional inverse $\lambda(u)\in R[\![u]\!]$ of the logarithm $g(u)$ is called the \textit{exponential} of the formal group law.
With our grading conventions, if we give $u$ degree one, then the power series $g(u)$ and $\lambda(u)$ are both homogeneous of degree one. Thus, if we write $\lambda(u)=u+\sum_{i\ge 1}\tau_iu^{i+1}$, then $\tau_i\in R$ has degree $-i$. It is noted by Hirzebruch that ring homomorphisms $\Laz\to R_\QQ$ are in one to one correspondence with power series $\lambda$ as above. 

\subsection{Twisting a cohomology theory} We recall Quillen's twisting construction for oriented cohomology theories and its analog for motivic oriented cohomology theories. For details we refer the reader to \cite[\S4.18 and \S4.19]{LM}.

Let $A^*$ be an oriented cohomology theory on $\Sm_k$ and $\tau=(\tau_i)\in \prod_{i=0}^\infty A^{-i}(k)$, with $\tau_0=1$. Let $\lambda_\tau(u)=\sum_{i=0}^{\infty}\tau_i u^{i+1}$. The  {\em Todd genus} is the power series $\Td_\tau(t) :=t/\lambda_\tau(t)$.  

For a vector bundle $E$ on some $Y\in\Sm_k$, the Todd class of a vector bundle $E$ on $Y$ is given as usual by the splitting principal:
$\Td_\tau(E)=\prod_{i=1}^r\Td_\tau(\xi_i)\in A^*(Y)$,
where $\xi_1,\ldots, \xi_r$ are the Chern roots of $E$ in $A^*(-)$.  The assignment $E\mapsto \Td_\tau(E)$ is multiplicative in exact sequences, hence descends to a well-defined homomorphism $\Td_\tau:K_0(Y)\to (1+A^{*\ge 1}(Y))^\times$.

The twisted oriented cohomology theory $A^*_\tau$  is defined by setting  $A^*_\tau(X)=A^*(X)$, for $X\in \Sm_k$, and for $f:Y\to X$, setting  $f_\tau^*=f^*:A^*(X)\to A^*(Y)$.
For a projective morphism $f:Y\to X$ and $\alpha\in A^*(Y)$, we set
\[
f_*^\tau(\alpha):=f_*(\Td_\tau( [T_Y]-[f^*T_X])\cdot \alpha).
\]
One computes that $c_1^{A_\tau}(L)=\lambda_\tau(c_1^A(L))$ and that the formal group law for $A^*_\tau$ is given by
\[
F_A^\tau(u, v)=\lambda_\tau(F_A(\lambda_\tau^{-1}(u), \lambda_\tau^{-1}(v))).
\]
Thus,  if $F_A$ is the additive group law, $F_A(u,v)=u+v$, then $\lambda_\tau(u)$ is the exponential map for the twisted group law $(F_A^\tau, A^*(k))$.

The twisting construction is also available for motivic oriented cohomology theories. Let $(\sE, \mu, 1, \vartheta)$ be a motivic oriented cohomology theory, and let $(\tau_i\in \sE^{-2i, -i}(k))_{i\ge0}$ be a sequence of elements with $\tau_0=1$.

Form the orientation $\vartheta_\tau\in \sE^{2,1}(\PP^\infty/0)$ by setting $\vartheta_\tau:=\lambda_\tau(\vartheta)=\sum_{i\ge0}\tau_i\vartheta^{i+1}$. The projective bundle formula  implies that $\vartheta^m$ goes to zero in $\sE^{2,1}(\PP^N/0)$ for $m>N$, from which it follows that
$\lambda_\tau(\vartheta)$ is a well-defined element in $\sE^{2,1}(\PP^\infty/0)=\underleftarrow{\lim}\sE^{2,1}(\PP^N/0)$ and is an orientation. Let ${}_\tau\sE^*$ be the oriented cohomology theory corresponding to $(\sE, \mu, 1, \vartheta_\tau)$.

It follows  from the definitions and \cite[Corollary 1.1.10 ]{PaninRR}  that the identity map on the graded groups $\sE^*_\tau(X)=\sE^*(X)$ defines an isomorphism of oriented cohomology theories 
\[
\oct((\sE, \mu, 1, \vartheta_\tau))\to \sE^*_\tau:=\oct((\sE, \mu, 1, \vartheta))_\tau; 
\]
we will henceforth identify these two oriented cohomology theories via this isomorphism and write $\sE^*_\tau$ for both $\oct((\sE, \mu, 1, \vartheta_\tau))$ and $\oct((\sE, \mu, 1, \vartheta))_\tau$.

Let  $H\Z\in \SH(k)$ be the commutative $\PP^1$ ring spectrum representing integral motivic cohomology \cite{VoevICM} (see also \cite[\S2.4]{RO}, where this spectrum is denoted $M\ZZ$). $H\Z$ has the  orientation $v_H\in H\Z^{2,1}(\PP^\infty/0)$ is given by the sequence of hyperplane classes $c_1^{\CH}(\sO_{\PP^n}(1))\in H\Z^{2,1}(\PP^n)=\CH^1(\PP^n)$.

 Let  $R$ be a $\Z$-graded commutative ring. We have the  motivic oriented cohomology theory on $\Sm_k$ 
\[
HR=\oplus_{n\in\Z}\Sigma_{\PP^1}^n HR^{-n},
\]
where $HR^{-n}$ is the $\PP^1$ spectrum representing motivic cohomology with coefficients $R^{-n}$ and where we give $HR$ the   the orientation induced by that of $H\Z$.  

Now let $(R, F)$ be a formal group law, with $R$ a (graded) $\Q$-algebra. Taking $(\tau_i\in R^{-i}=HR^{-2i,-i}(k))_i$ to be the sequence such that $\lambda_\tau(u)$ is the exponential function for $( R, F)$, we form the twisted motivic oriented cohomology theory $HR_\tau$; by construction $HR_\tau$ has associated formal group law $(R, F)$. We note that $(R, F)$ is Landweber exact, since $R$ is a $\Q$-algebra.

We may also form the motivic oriented cohomology theory $\MGL\otimes_{\Laz} R$ associated to the Landweber exact formal group law $(R ,F )$. As this is the universal motivic  oriented cohomology theory with group law $(R, F)$, the classifying map $\Theta_{HR_\tau}:\MGL\to HR_\tau$ factors through $\Theta_{F,R}:\MGL\to \MGL\otimes_{\Laz} R$, giving the induced classifying map $\bar{\Theta}_{HR_\tau}:\MGL\otimes_{\Laz} R\to HR_\tau$, unique up to a phantom map.

\begin{lemma} \label{lem:MotReal} The map $\bar{\Theta}_{HR_\tau}:\MGL\otimes_{\Laz} R\to HR_\tau$ induces an isomorphism 
$\MGL^{*,*}\otimes_{\Laz} R\to HR^{*,*}_\tau$ of bi-graded cohomology theories on $\Sm_k$, in particular, we have the isomorphism of associated oriented cohomology theories on $\Sm_k$
\[
\bar{\Theta}_{HR_\tau}:R^*=\MGL^*\otimes_{\Laz} R\to HR^*_\tau
\]
\end{lemma}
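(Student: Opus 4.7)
The strategy is to reduce to the rational splitting of $\MGL_\Q$ and then base-change along $\phi_F$. Because $R$ is a $\Q$-algebra, the formal group law $(F,R)$ is automatically Landweber exact: for each prime $\ell$, $v_0 = \ell$ is a unit in $R$, so every quotient $R/(v_0,\dots,v_{n-1})$ vanishes and the injectivity condition on $v_n$ is vacuous. Hence Theorem~\ref{thm: Landweber} applies, giving $(\MGL\otimes_{\Laz}R)^{*,*}(X) = \MGL^{*,*}(X)\otimes_{\Laz}R$ on $\Sm_k$, and it suffices to show that $\bar{\Theta}_{HR_\tau}$ is an equivalence of motivic oriented spectra. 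Note also that the twist construction leaves the underlying bigraded groups untouched, so $HR_\tau^{*,*}(X)$ coincides with motivic cohomology of $X$ with coefficients in the $\Q$-module $R$.

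First I would handle the universal case. Take $R_0\cong \Laz$ from the preceding Example, and pass to $\Q$-coefficients so that $\lambda_b(u)=u+\sum_{i\ge 1}b_iu^{i+1}\in \Laz_\Q[\![u]\!]$ is the universal exponential, classifying $F_b=F_\Laz$ over $\Laz_\Q$. The twisted motivic spectrum $H\Laz_{\Q,\lambda_b}$ is then a motivic oriented cohomology theory whose formal group law is $(F_\Laz,\Laz)\otimes\Q$, and the classifying map
\[
\MGL \longrightarrow H\Laz_{\Q,\lambda_b}
\]
is a morphism of motivic oriented spectra which induces an isomorphism on coefficients. By the rational Hopkins-Morel isomorphism in the form of Hoyois's theorem, $\MGL_\Q$ splits as a direct sum of shifted copies of $H\Q$ whose assembly, governed precisely by the universal exponential $\lambda_b$, is $H\Laz_{\Q,\lambda_b}$; consequently the displayed map becomes an equivalence after inverting no primes (since we are already over $\Q$).

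Second, I would base-change along $\phi_F:\Laz_\Q\to R$. Tensoring $H\Laz_{\Q,\lambda_b}$ over $\Laz_\Q$ with $R$ produces $HR$ equipped with the twist given by $\phi_F(\lambda_b)$; but by definition of the classifying map, $\phi_F(\lambda_b)=\lambda_\tau$, the exponential of $(F,R)$. Thus
\[
\MGL\otimes_{\Laz}R \;=\; \MGL_\Q\otimes_{\Laz_\Q}R \;\simeq\; H\Laz_{\Q,\lambda_b}\otimes_{\Laz_\Q}R \;=\; HR_\tau,
\]
and by the uniqueness (up to phantoms) of a classifying map of motivic oriented spectra with formal group law $(F,R)$, this composite equivalence agrees with $\bar{\Theta}_{HR_\tau}$. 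Restricting to the diagonal bigrading $\sE^{*}=\sE^{2*,*}$ yields the asserted isomorphism $R^*\cong HR^*_\tau$ of oriented cohomology theories on $\Sm_k$.

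The main technical obstacle is the third step of the first paragraph, namely upgrading the rational Hopkins-Morel equivalence from an equivalence of motivic spectra to one of motivic \emph{oriented} spectra: one must check that the canonical orientation $\vartheta_{\MGL}$ corresponds, under the splitting, to the twisted orientation $\vartheta_{\lambda_b}$ on $H\Laz_\Q$. This compatibility can be verified directly on $\sE^{2,1}(\PP^\infty/0)$, since both orientations restrict to the universal logarithm/exponential of the tautological formal group law, a compatibility built into the definition of the twisting construction. Once this is in hand, the rest is naturality of base change.
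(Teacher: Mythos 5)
Your overall architecture --- note that every formal group law over a $\Q$-algebra is Landweber exact, establish the universal case $\MGL_\Q\simeq H\Laz_{\Q,\lambda_b}$, then base-change along $\phi_F:\Laz_\Q\to R$ --- is a legitimate reorganization, and it differs from the paper, which compares $\MGL\otimes_{\Laz}R$ with $HR_\tau$ directly for general $R$. The base-change step is fine once the universal case is known (the splitting exhibits $\MGL^{*,*}_\Q(X)$ as a free $\Laz_\Q$-module, so no Tor terms appear). The problem is that the universal case itself is asserted rather than proved, and what you identify as "the main technical obstacle" is not where the difficulty lies. Matching the orientation $\vartheta_{\MGL}$ with $\lambda_b(v_H)$ in $\sE^{2,1}(\PP^\infty/0)$ is automatic from the construction of the classifying map; it only controls the map on coefficient rings and on Chern classes. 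A morphism of motivic oriented spectra that induces an isomorphism on $\sE^{*,*}(k)$ is not \emph{a priori} an isomorphism on $\sE^{*,*}(X)$ for every $X$ --- promoting the coefficient-level statement to an isomorphism of bigraded cohomology theories is precisely the content of the lemma, so your "consequently the displayed map becomes an equivalence" begs the question. (Also, the source of that map must be $\MGL_\Q$, not $\MGL$: an integral spectrum cannot be equivalent to a rational one.)

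Concretely, Hoyois's theorem gives $\MGL/(a_1,a_2,\dots)\simeq H\Z$; extracting from it the statement that the specific classifying map $\MGL_\Q\to H\Laz_{\Q,\lambda_b}$ is an equivalence requires knowing that the slice tower of $\MGL_\Q$ splits with identified layers and that the induced maps on layers are detected on homotopy. This is exactly what the paper's proof supplies, and for arbitrary $R$ at once: Spitzweck's identification $s_n(\MGL\otimes_{\Laz}R)\simeq \Sigma_{\PP^1}^n HR^{-n}$ together with Voevodsky's computation of $s_nHA$, Pelaez's result that the induced maps on slices are $H\Q$-module maps, and the Cisinski--D\'eglise computation $\Hom_{H\Q\text{-Mod}}(\Sigma_{\PP^1}^n HR^{-n},\Sigma_{\PP^1}^n HR^{-n})\cong\Hom_{\Q}(R^{-n},R^{-n})$, which reduces everything to the coefficient check you do perform (the twisted group law equals $F$). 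Without this slice-tower comparison, or an equivalent rigidity statement for maps between sums of shifted $H\Q$-modules, your argument has a genuine gap at its central step.
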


\begin{proof} We apply the slice spectral sequence to $\MGL\otimes_{\Laz} R$ and $HR_\tau$; the map $\bar{\Theta}_{HR_\tau}$ induces a map of spectral sequences. For a $\PP^1$ spectrum $\sE$ denote the $n$th layer in the slice tower for $\sE$ as $s_n\sE$. For an abelian group $A$, it follows from  \cite[Theorem 6.5.1, Theorem 9.0.3]{Lev_Coniveau} that
\[
s_n HA=\begin{cases} HA&\text{ for }n=0\\0&\text{ for }n\neq0.\end{cases}
\]
Also $s_n\Sigma^m_{\PP^1}\sE=\Sigma^m_{\PP^1}s_{n-m}\sE$. As $HR=\oplus_{n\in\Z}\Sigma_{\PP^1}^n HR^{-n}$, it follows  that the $n$th layer in the slice tower for $HR$ is given by  $s_nHR=\Sigma_{\PP^1}^n HR^{-n}$. By Spitzweck's computation of the layers in the slice tower for a Landweber exact theory (\cite[Theorem 6.1]{Spitzweck}, relying on \cite{Hoyois}), we have the same $s_n\MGL\otimes_{\Laz} R=\Sigma_{\PP^1}^n HR^{-n}$ as well. The maps on the layers of the slice tower induced by $\bar{\Theta}_{HR_\tau}$ are $H\Q$-module maps (by results of Pelaez \cite[Theorem 3.6.14]{Pelaez}), and one knows by a result of Cisinski-Deglise \cite[Theorem 16.1.4]{CD} that
\[
\Hom_{H\Q-\text{Mod}}(\Sigma_{\PP^1}^n HR^{-n}, \Sigma_{\PP^1}^n HR^{-n})\cong \Hom_{\QQ-\text{Vec}}(R^{-n}, R^{-n})
\]
In particular, the map $s_n\bar{\Theta}_{HR_\tau}:\Sigma_{\PP^1}^n HR^{-n}\to \Sigma_{\PP^1}^n HR^{-n}$ is determined by the induced map after applying the functor $H^{-2n,-n}(k,-)$, that is, on the coefficient rings of the theories $\MGL^{*,*}\otimes_{\Laz} R$ and $HR^{*,*}_\tau$. However, by construction, this is the map $\bar{\Theta}:R\to R$ induced by the classifying map $\Laz\to R$ associated to the formal group $F_\tau(u,v)=\lambda_\tau(g_\tau(u)+g_\tau(v))$. As this latter formal group law is equal to $F$ by construction, the map $\bar{\Theta}:R\to R$ is the identity map.

Thus ${\Theta}_{HR_\tau}$ induces an isomorphism of the slice spectral sequences. As these are bounded and convergent \cite[Theorem 8.12]{Hoyois}, $\Theta_{HR_\tau}$ is an isomorphism of bi-graded cohomology theories on $\Sm_k$.
\end{proof}

\subsection{Universality} We have already noted (remark~\ref{rem:MGLOrientedCoh}(3)) that for a field of characteristic zero, $\MGL^*$ is the universal oriented cohomology theory on $\Sm_k$. In this subsection, we use the universality of $\MGL$ as a motivic oriented cohomology theory plus some tricks with formal group laws to show that $\MGL_\QQ^*$ is the universal oriented cohomology theory for theories  in $\Q$-algebras on $\Sm_k$, for an arbitrary field $k$.

\begin{lemma}\label{lem:CHUniv} Let $k$ be a field. If $k$ has characteristic zero, then $\CH^*$ is the universal oriented cohomology theory on $\Sm_k$ with formal group law $(u+v, \Z)$. If $k$ has characteristic $p>0$, then $\CH^*_\QQ$ is the universal oriented cohomology theory on $\Sm_k$  with formal group law $(u+v, \QQ)$.
\end{lemma}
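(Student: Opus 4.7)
The plan is to reduce the universality of $\CH^*$ (respectively $\CH^*_\QQ$) to the universality of $\Omega^*$ combined with the Levine--Morel identification $\Omega^* \otimes_{\Laz} \Z \cong \CH^*$. In the characteristic zero case, given an oriented cohomology theory $A^*$ on $\Sm_k$ with $F_A = u+v$, the universality of $\Omega^* \cong \MGL^*$ (Levine and Hopkins--Morel--Hoyois) produces a unique classifying morphism $\Theta_A : \Omega^* \to A^*$. Since $F_A$ is additive, the classifying ring map $\phi_A : \Laz \to A^*(k)$ factors through the quotient $\Laz \to \Z$ that classifies the additive formal group law. This forces $\Theta_A$ to descend uniquely to $\Omega^* \otimes_{\Laz} \Z \to A^*$, and the identification $\Omega^* \otimes_{\Laz} \Z \cong \CH^*$ from \cite[Theorem~4.5.1]{LM} then yields the desired unique morphism $\CH^* \to A^*$.

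In the positive characteristic case, I would apply the same strategy, using that $\Omega^*$ remains the universal oriented Borel--Moore $\Laz$-functor on $\Sm_k$ of geometric type by \cite[Theorem~2.4.13]{LM}, a structure carried canonically by every oriented cohomology theory. Given $A^*$ with $A^*(k)$ a $\QQ$-algebra and $F_A = u+v$, the canonical morphism $\Theta_A : \Omega^* \to A^*$ of Borel--Moore $\Laz$-functors exists, and the classifying map $\Laz \to A^*(k)$ factors through $\Laz \to \Z \to \QQ$. By the same factorization argument as in characteristic zero, $\Theta_A$ descends to a morphism $\Omega^*_\QQ \otimes_{\Laz_\QQ} \QQ \to A^*$. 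Uniqueness of the resulting map $\CH^*_\QQ \to A^*$ then follows from the uniqueness of $\Theta_A$ together with the surjectivity of $\Omega^*_\QQ \to \CH^*_\QQ$.

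The main obstacle is to identify $\Omega^*_\QQ \otimes_{\Laz_\QQ} \QQ$ with $\CH^*_\QQ$ in positive characteristic, since the original Levine--Morel proof of the integral statement invokes Hironaka's resolution of singularities. I would replace resolutions by de~Jong's alterations: every closed subvariety of $X \in \Sm_k$ admits a smooth projective alteration of some positive degree $d$, and dividing by $d$ makes sense rationally, producing a well-defined preimage in $\Omega^*_\QQ(X)$ of any cycle class. This should suffice to show both surjectivity of $\Omega^*_\QQ \to \CH^*_\QQ$ and the description of the kernel as the ideal generated by the augmentation of $\Laz_\QQ \to \QQ$. An alternative route is to apply Lemma~\ref{lem:MotReal} to $R = \QQ$ with trivial twist $\tau = 0$, which directly gives the isomorphism $\MGL^* \otimes_{\Laz} \QQ \cong H\QQ^* \cong \CH^*_\QQ$ of oriented cohomology theories, and then route the factorization of $\Theta_A$ through the canonical comparison $\Omega^* \to \MGL^*$ after tensoring with $\QQ$.
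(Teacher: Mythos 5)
Your characteristic-zero argument is essentially the content of \cite[Theorems 1.2.2 and 4.5.1]{LM}, which is exactly what the paper cites, and your uniqueness argument in positive characteristic also works: any morphism of oriented cohomology theories $\CH^*_\QQ\to A^*$ precomposed with $\Theta_{\CH^*_\QQ}:\Omega^*\to\CH^*_\QQ$ must equal $\Theta_A$ by the universality of $\Omega^*$ among oriented Borel--Moore $\Laz$-functors of geometric type, and de Jong's alterations plus Noetherian induction show that the image of $\Theta_{\CH^*_\QQ}(X)$ spans $\CH^*_\QQ(X)$ over $\QQ$. The gap is in the \emph{existence} half in characteristic $p$. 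To descend $\Theta_A$ to a map out of $\CH^*_\QQ$ you must show that $\ker(\Omega^*(X)\to\CH^*_\QQ(X))$ is contained in $\ker\Theta_A$ for every $X$, and your mechanism for this is the identification of $\CH^*_\QQ(X)$ with $\Omega^*_\QQ(X)\otimes_{\Laz_\QQ}\QQ$. That identification is not a routine consequence of alterations: the characteristic-zero proof of \cite[Theorem 4.5.1]{LM} rests on the generalized degree formula and on the well-definedness, under rational equivalence, of an inverse map $\CH^*\to\Omega^*\otimes_{\Laz}\Z$, both of which use resolution of singularities in an essential way. Alterations plausibly give you surjectivity of $\Omega^*_\QQ\to\CH^*_\QQ$, but they do not compute the kernel; that computation in characteristic $p$ is a substantial theorem of exactly the type the paper flags as open integrally, and the paper's proof is engineered precisely to avoid needing it.

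Your fallback via Lemma~\ref{lem:MotReal} does not repair this. That lemma identifies $\CH^*_\QQ$ with $\MGL^*\otimes_{\Laz}\QQ$, but the universality of $\MGL$ is universality among \emph{motivic} oriented cohomology theories, so it produces no map from $\MGL^*$ (hence none from $\MGL^*\otimes_{\Laz}\QQ$) to an arbitrary, possibly non-representable, oriented cohomology theory $A^*$; and routing through $\Omega^*\to\MGL^*$ requires knowing that $\Theta_A$ factors through $\Theta_{\MGL}$ rationally, which is again the open comparison problem in positive characteristic. The paper instead constructs the classifying map by hand: it twists $A^*[t,t^{-1}]$ to the multiplicative formal group law, builds a modified Chern character $\mathrm{ch}^A_t:K_0[t,t^{-1}]_\QQ\to A^*[t,t^{-1}]_\tau$ which Panin's Riemann--Roch theorem shows is a morphism of oriented theories, inverts the corresponding Grothendieck isomorphism for $A^*=\CH^*_\QQ$, untwists, uses Adams operations to recover the grading, and gets uniqueness from Grothendieck--Riemann--Roch. (A further, smaller, issue with your route: $\Theta_A$ is a priori compatible only with smooth pullbacks, so even after descending you would still have to verify that the resulting map commutes with pullback along arbitrary morphisms in $\Sm_k$, as required of a morphism of oriented cohomology theories.)
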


One would expect that over an arbitrary field, $\CH^*$ is the universal oriented cohomology theory on $\Sm_k$ with formal group law $(u+v, \Z)$. This does not seem to be known.

\begin{proof} The case of characteristic zero is proven in \cite[Theorem~1.2.2]{LM}. In characteristic $p>0$, let $A^*$ be an oriented cohomology theory with additive formal group law $F_A(u,v)=u+v$ and with $A^*(k)$ a $\QQ$-algebra. Extend the coefficients in the theory $A^*$ by a Laurent polynomial ring, forming the theory $A^*[t,t^{-1}]$, with $t$ of degree -1. Then take the twist with respect to the modified exponential function $\lambda_t(u):=t^{-1}(1-e^{-tu})$,
that is, $\tau_i:=(-1)^it^i/(i+1)!$. A simple computation shows that theory $A^*[t,t^{-1}]_\tau$ has the multiplicative group law $F(u,v)=u+v-tuv$, and that the twisted first Chern class is given by  $c_1^t(L)=t^{-1}(1-e^{-tc_1^A(L)})$.

One can define the modified Chern character
\[
\text{ch}^A_t:K_0[t,t^{-1}]_\Q\to A^*[t,t^{-1}]_\tau,
\]
which sends a vector bundle $E$ of rank $r$  to $\text{ch}^A_t(E):=r-tc_1^t(E^\vee)$.
For a line bundle $L$,  we have $\text{ch}^A_t(L)=1-tc_1^t(L^\vee)=e^{-tc_1^A(L^\vee))}=e^{tc_1^A(L))}$.
Using the splitting principle, the fact that $A^*$ has the  additive formal group law implies that $\text{ch}^A_t$ is a natural transformation of functors to graded $\QQ[t, t^{-1}]$-algebras. Since $c_1^{K_0[t, t^{-1}]}(L)=t^{-1}(1-L^{-1})$,  we have
\[
\text{ch}^A_t(c_1^K(L))=c_1^t(L)
\]
for all line bundles $L$. By Panin's Riemann-Roch theorem \cite[Corollary 1.1.10]{PaninRR}, this shows that $\text{ch}^A_t$ is a natural transformation of oriented cohomology theories.

We have the  Adams operations $\psi_k$, $k=1, 2, \ldots$,  on $K_0(X)$, which we extend to Adams operations on $K_0(X)[t,t^{-1}]_\QQ$ by $\QQ[t,t^{-1}]$-linearity. Define the operation $\psi^A_k$ on $A^*(X)[t,t^{-1}]$ to be the $\QQ[t,t^{-1}]$-linear map which is multiplication by $k^n$ on $A^n(X)$; it is easy to see that $\psi^A_k$ is a natural $\QQ[t,t^{-1}]$-algebra homomorphism. As $A$ has the additive group law, $c_1^A(L^{\otimes k})=kc_1^A(L)$ and thus
\[
\text{ch}^A_t(\psi_k(L))=\text{ch}^A_t(L^{\otimes k})
=e^{tc_1^A(L^{\otimes k})}
=e^{ktc_1^A(L)}
=\psi^A_k(e^{tc_1^A(L)})
=\psi^A_k(\text{ch}^A_t(L))
\]
for all line bundles $L$. By the splitting principle, this gives the identity
\[
\text{ch}^A_t\circ \psi_k=\psi^A_k\circ \text{ch}^A_t.
\]

If we take $A^*=\CH^*_\QQ$,  $\text{ch}^A_t$ is a modified version of the classical Chern character; thus by Grothendieck's classical result, the natural transformation
\[
\text{ch}^{\CH^*_\Q}_t:K_0[t,t^{-1}]_\Q\to \CH^*_\QQ[t,t^{-1}]_\tau
\]
is an isomorphism. This gives us the natural transformation of oriented cohomology theories
\[
\text{ch}^A_t\circ(\text{ch}^{\CH^*_\Q}_t)^{-1}:\CH^*_\QQ[t,t^{-1}]_\tau\to A^*[t,t^{-1}]_\tau
\]
Twisting back  gives us the natural transformation of oriented cohomology theories
\[
\vartheta^{\CH t}_A:\CH^*_\QQ[t,t^{-1}]\to A^*[t,t^{-1}].
\]

We give $\CH^*_\QQ[t,t^{-1}]$ a bi-grading by putting $\CH^n_\QQ\cdot t^m$ in bi-degree $(n,m)$, and do the same for
$A^*[t,t^{-1}]$.
Since both $\text{ch}^{\CH^*_\Q}_t$ and $\text{ch}^A_t$ commute with the Adams operations, we have
$\vartheta^{\CH t}_A\circ \psi_k^{\CH}=\psi_k^A\circ \vartheta^{\CH}_A$
and thus $\vartheta^{\CH t}_A$ respects the bi-grading. Passing to the respective quotients by the ideal $(t-1)$ gives us the natural transformation of oriented cohomology theories
\[
\vartheta^{\CH}_A:\CH^*_\QQ\to A^*
\]
where we now use the original grading on $\CH^*_\QQ$ and $A^*$.

The uniqueness of $\vartheta^{\CH}_A$ follows from  Grothendieck-Riemann-Roch. Indeed, as a natural transformation of oriented cohomology theories, $\vartheta^{\CH}_A(c_n^{\CH}(E))=c_n^A(E)$ for all  vector bundles $E$ on $X\in \Sm_k$, and all $n$. But for irreducible $X\in \Sm_k$, the Grothendieck-Riemann-Roch theorem implies that $\CH^*(X)_\QQ$ is generated as a $\Q$-vector space by the elements of the form $c_n^{\CH}(E)$, $E$ a vector bundle on $X$, $n\ge1$ an integer, together with the identity element $1\in \CH^0(X)$. Thus $\vartheta^{\CH}_A$ is unique.
\end{proof}

We now consider the generic twist $\CH^*_\Q[\mathbf{b}]_{\mathbf{b}}$, where $\mathbf{b}=\{1=b_0, b_1, b_2,\ldots\}$. We have as well the motivic oriented cohomology theory $H\QQ\in \SH(k)$. We may form the generic twist $H\QQ[\mathbf{b}]_{\mathbf{b}}$ by taking the orientation $\lambda_b(v_H):=\sum_{n\ge0}b_nv_H^{n+1}$.

\begin{prop} Let $k$ be a field.
\begin{enumerate}
 \item Suppose  $k$  has characteristic zero. Then $\MGL^*$ is the universal oriented cohomology theory on $\Sm_k$.
 \item For $k$ arbitrary,   $\MGL^*_\QQ$ is the universal oriented cohomology theory in $\QQ$-algebras on $\Sm_k$.
\item  Let $(F, R)$ be a formal group law, with $R$ a $\Q$-algebra and let $\lambda_\tau(u)=\sum_{n\ge0}\tau_nu^{n+1}$ be the associated exponential function. Then the classifying map $\MGL_\QQ^*\otimes_{\Laz} R\to (\CH^*\otimes R)_\tau$ is an isomorphism.
\item Consider the generic twist $\CH^*_\QQ[\mathbf{b}]_{\mathbf{b}}$. The classifying map $\MGL^*_\QQ\to \CH^*_\QQ[\mathbf{b}]_{\mathbf{b}}$ is an isomorphism.
\end{enumerate}
\end{prop}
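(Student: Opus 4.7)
The plan is to prove the four statements in the order (1), (3), (4), (2), with each building on the earlier ones.

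Part (1) follows immediately from two results already cited in the excerpt: Levine--Morel's universality of $\Omega^*$ and Levine's comparison isomorphism $\Theta_\MGL:\Omega^*\to\MGL^*$ in characteristic zero. Part (3) is a direct application of Lemma~\ref{lem:MotReal}: since $R$ is a $\QQ$-algebra, every formal group law over $R$ is automatically Landweber exact, so $\MGL\otimes_{\Laz}R$ exists as a motivic oriented cohomology theory, and the lemma furnishes an isomorphism of bi-graded cohomology theories $\MGL^{*,*}\otimes_{\Laz}R\cong HR^{*,*}_\tau$. Restricting to bi-degree $(2*,*)$ and identifying $HR^{2*,*}\cong\CH^*_\QQ\otimes R$ (immediate from $HR=\oplus_n\Sigma^n_{\PP^1}HR^{-n}$ together with the fact that rational motivic cohomology in bi-degree $(2p,p)$ is $\CH^p_\QQ$) gives the desired iso of oriented cohomology theories. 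Part (4) is the specialisation of part (3) at $R=\QQ[\mathbf{b}]$ and $F=F_b$: by the Example, the classifying map $\Laz_\QQ\to R$ is an isomorphism, so $\MGL^*_\QQ\otimes_{\Laz}R\cong\MGL^*_\QQ$ and part (3) reads $\MGL^*_\QQ\cong\CH^*_\QQ[\mathbf{b}]_{\mathbf{b}}$.

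The main work is in part (2). Given an oriented cohomology theory $A^*$ on $\Sm_k$ with $R:=A^*(k)$ a $\QQ$-algebra and formal group law $F_A$ having exponential $\lambda_\tau$ (with $\tau_i\in R^{-i}$), I would construct $\Theta:\MGL^*_\QQ\to A^*$ by reduction to the additive case via twisting. Let $g_A:=\lambda_\tau^{-1}$ be the logarithm; then $A^*_{g_A}$ has additive formal group law, and Lemma~\ref{lem:CHUniv} provides a unique natural transformation $\vartheta:\CH^*_\QQ\to A^*_{g_A}$. Using the natural $R$-module structure of $A^*_{g_A}$ (pullback of $R=A^*(k)$ times cup product), this extends uniquely by $R$-linearity to a natural transformation $\CH^*_\QQ\otimes R\to A^*_{g_A}$ of oriented $R$-algebra cohomology theories. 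Twisting both sides back by $\lambda_\tau$ produces a natural transformation $(\CH^*_\QQ\otimes R)_\tau\to A^*$, which by part (3) is the same datum as a natural transformation $\MGL^*_\QQ\otimes_{\Laz}R\to A^*$. Composing with the canonical $\MGL^*_\QQ\to\MGL^*_\QQ\otimes_{\Laz}R$ (given by $m\mapsto m\otimes 1$) yields the desired $\Theta$. For uniqueness, any $\Theta$ restricts on $\Spec k$ to the classifying map $\phi_A$ (forced by the formal group law compatibility), and by naturality plus the projection formula for the $R$-action, $\Theta$ factors through $\MGL^*_\QQ\otimes_{\Laz}R$; reversing the chain of reductions traces uniqueness back to the uniqueness in Lemma~\ref{lem:CHUniv}.

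The hard part will be justifying the ``$R$-linear extension'' step in part~(2): one must show that $\vartheta:\CH^*_\QQ\to A^*_{g_A}$ extends uniquely to an $R$-linear natural transformation of oriented cohomology theories $\CH^*_\QQ\otimes R\to A^*_{g_A}$, compatible with smooth pullback, projective pushforward, first Chern classes, and the ring structure. This requires careful use of the projection formula and of the naturality of the $R$-action coming from pullback along structure morphisms. Once this is in place, the twisting step and the identifications via parts~(3) and~(4) assemble cleanly.
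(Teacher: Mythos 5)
Your proposal is correct and follows essentially the same route as the paper: parts (1), (3), (4) are handled identically, and for part (2) both arguments twist $A^*$ by its logarithm to reach the additive case, invoke Lemma~\ref{lem:CHUniv}, extend the resulting classifying map over a coefficient ring, and twist back. The only (cosmetic) difference is that you extend over $R=A^*(k)$ directly and route through $\MGL^*_\QQ\otimes_{\Laz}R$ via part (3), whereas the paper extends over the generic ring $\QQ[\mathbf{b}]$ via the classifying homomorphism $\phi:\QQ[\mathbf{b}]\to A^*(k)$ and then quotes part (4); the ``$R$-linear extension'' step you flag as the hard part is exactly the step the paper performs (without further comment) in extending $\theta_{\tau^{-1}}$ to $\CH^*_\QQ[\mathbf{b}]$ by means of $\phi$.
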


\begin{proof} (1) is just remark~\ref{rem:MGLOrientedCoh}~(3).

(4) follows from (3), noting that the classifying map $\Laz\to \Z[b_1, b_2,\ldots]$  for the formal group law   $F_b(u,v):=\lambda_b(\lambda_b^{-1}(u)+\lambda_b^{-1}(v))$ induces an isomorphism
$\Laz_\QQ\to \QQ[b_1, b_2,\ldots]$ (see e.g. \cite[Theorem 7.8]{Adams}), hence $\MGL_\QQ^*\to \MGL_\QQ^*\otimes_{\Laz_\QQ}\QQ[\mathbf{b}]$ is an isomorphism. The assertion (3) follows immediately from Lemma~\ref{lem:MotReal},  as the isomorphism $\CH^*\cong H^{2*,*}(-,\Z)$ gives rise to a canonical isomorphism $HR^*_\tau\cong (\CH^*\otimes R)_\tau$.

To prove (2), as we have inverted the characteristic, we may replace $k$ with its perfect closure, so we may assume that $k$ is perfect.  It suffices by (4) to show that $\CH^*_\QQ[\mathbf{b}]_{\mathbf{b}}$ is the universal oriented cohomology theory in $\Q$-algebras on $\Sm_k$. This follows by applying the twisting construction. Indeed,  let $A^*$ be an oriented cohomology theory on $\Sm_k$, such that $A^*(k)$ is a $\Q$-algebra.  Let $(\tau_n^{-1}\in A^{-n})_n$ be the sequence such that $\lambda_{\tau^{-1}}(u)$ is the logarithm of the formal group law $(F_{A^*}, A^*(k))$. The twist $A^*_{\tau^{-1}}$ thus has the additive formal group law and hence by Lemma~\ref{lem:CHUniv} we have the (unique) classifying map $\theta_{\tau^{-1}}:\CH^*_\QQ\to A^*_{\tau^{-1}}$. As we have already mentioned above, the map $\Laz_\QQ\to \QQ[b_1, b_2, \ldots]$ classifying the formal group law  $F_b(u,v)$ is an isomorphism, hence there is a unique ring homomorphism $\phi: \QQ[b_1, b_2, \ldots]\to A^*(k)$ with $\phi(F_b(u,v))=F_{A^*}(u,v)$.   Extend $\theta_{\tau^{-1}}$ to $\Theta_{\tau^{-1}}:\CH^*_\QQ[\mathbf{b}]\to A^*_{\tau^{-1}}$ by using $\phi$, and then twist back by $\mathbf{b}$ and $\phi(\mathbf{b})=\tau$ to get the map $\Theta_A:\CH^*_\QQ[\mathbf{b}]_{\mathbf{b}}\to A^*$ of oriented cohomology theories on $\Sm_k$. The uniqueness of $\Theta_A$ follows from the uniqueness of $\theta_{\tau^{-1}}$ and that of $\phi$. This completes the proof.
\end{proof}

Consider a formal group law $(R,F)$ with exponential $\lambda(t):=t+\sum_{i\ge 1}\tau_it^{i+1}\in R_\QQ[\![t]\!]$, $\tau_i\in R^{-i}_\QQ$, $\tau_0=1$.  We have two methods of constructing an oriented cohomology theory with formal group law $(R_\QQ,F)$: the specialisation construction $\MGL^*\otimes_{\Laz}R_\QQ$  and the twisting construction $(\CH^*\otimes R_\QQ)_\tau$.  These two oriented cohomology theories are canonically isomorphic; we will denote both of them by $R_\QQ^*$:
$\MGL^*\otimes_{\Laz}R_\QQ=:R^*_\QQ:=(\CH^*\otimes R_\QQ)_\tau$.

\section{Algebraic elliptic cohomology}\label{sec: Landweber}

\subsection{The elliptic formal group law}\label{sec:ellip group law}
An \textit{algebraic elliptic cohomology theory} is the cohomology theory corresponding to an elliptic formal group law. More precisely, let $R$ be a ring and let $p:E\to \Spec R$ be an elliptic curve over $R$  with  a  chosen local uniformizer $t$ in a neighborhood of the identity section. The expansion of the group law of $E$ in terms of the coordinate $t$ gives a formal group law $F_E$ with coefficients in $R$.

There is a well-studied elliptic formal group law and the corresponding cohomology theory. Let $A=\ZZ[\mu_1, \mu_2, \mu_3, \mu_4, \mu_6]$. We take $E$ to be the Weierstrass curve \[y^2 + \mu_1xy + \mu_3y = x^3 + \mu_2x^2 + \mu_4x + \mu_6\] over the ring $A$ and use $t=y/x$ as the local uniformizer. This formal group law will be referred as the TMF elliptic formal group law. See \cite{Hop_ICM} and \cite{H_MITNotes} for survey of this theory. The classifying map $F_E :\Laz\to A[\Delta^{-1}]$   is Landweber exact, where $\Delta$ is the discriminant of $E_A/A$.

The elliptic formal group law we are using is called the Krichever elliptic formal group law in the literature. It is related to, but different from the TMF elliptic formal group law. We recall the genus corresponding to this formal group law, following the convention in \cite{T}.\footnote{Krichever uses the function $te^{-k_0t}\Phi(t, z;\tau)$ as does H\"ohn, except that H\"ohn leaves the exponential factor $\text{exp}(-k_0t+\zeta(z,\tau)t)$ as an unspecified ``constant of integration''. Totaro introduces the change of variable $t\mapsto \frac{t}{2\pi i}$ as we do, so that the resulting function is expressible in terms of $e^x$ instead of $e^{2\pi i x}$. This has the effect of replacing H\"ohn's choice of lattice $\<2\pi i, 2\pi i\tau\>$ with our $\<1,\tau\>$. Totaro's $k$ is different from ours, but this only affects formulas for bundles with non-zero first Chern class. In particular, the rigidity property holds for all these elliptic theories.} 

Let
\[
\sigma(z, \tau):=z\prod_{w\in \Z+\Z\tau, w\neq 0}(1-\frac{z}{w})e^{\frac{z}{w}+\frac{1}{2}(\frac{z}{w})^2}
\]
be the Weierstrass sigma function. Set $\zeta(z,\tau):=\partial \log\sigma(z,\tau)/\partial z$. The {\em Baker-Akhiezer function} is defined as
\[\Phi(t, z;\tau)=e^{\zeta(z,\tau)t}\frac{\sigma(z-t, \tau)}{\sigma(t, \tau)\sigma(z, \tau)}\]
and we let
\begin{equation}\label{eqn:EllipticExp}
Q(t):=\frac{t}{2\pi i}e^{kt}\Phi(\frac{t}{2\pi i}, z;\tau), \quad \lambda(t):=\frac{t}{Q(t)}.
\end{equation}
The coefficients of the power series $Q(t)$ are in $\QQ(\!(e^{2\pi iz})\!)[\![e^{2i\pi\tau},k]\!]$ (see e.g. \cite[pg. 765]{T}).

We define the algebraic elliptic genus as the ring homomorphism
\[
\phi_E: \Laz\to \QQ(\!(e^{2\pi iz})\!)[\![e^{2i\pi\tau},k]\!]
\]
associated to the Hirzebruch genus $Q(t)$, that is, the associated formal group law has exponential   power series $\lambda(t)$.   Explicitly,  the elliptic genus of a $n$-dimensional smooth variety $X$ is defined by
$\phi(X):=\langle\prod_{i=1}^nQ(\xi_i), [X]\rangle$, where the $\xi_i$ are the Chern roots of the tangent bundle $\TT_X$ of $X$.

The coefficient ring $\tilde\Ell$ of the   the elliptic formal group law is by definition the image of $\phi_E$ and the elliptic formal group law is by definition
\[
x+_E y=\lambda(\lambda^{-1}(x)+\lambda^{-1}(y)).
\]
The algebraic elliptic cohomology theory associated to this formal group law, $\MGL^*\otimes_{\Laz}\tilde\Ell$, is denoted by $\tilde\Ell^*$.

Define functions $a_2, a_3, a_4$ by
\begin{equation}\label{eqn:Generators}
a_2 =(2\pi i)^{-2}{\wp}(z;\tau), a_3=(2\pi i)^{-3}{\wp}'(z;\tau), a_4=\frac{1}{2}(2\pi i)^{-4}g_2(\tau).
\end{equation}
where ${\wp}(z;\tau)$ is the Weierstrass ${\wp}$-function and $g_2(\tau)$ is the  Eisenstein series $60G_4(\tau)$.   Let $a_1=k\in \QQ(\!(e^{2\pi iz})\!)[\![e^{2i\pi\tau},k]\!]$.

Recalling the classical expansions of the functions  $a_2, a_3, a_4$ in terms of $e^{2i\pi\tau}$ and $e^{2\pi iz}$ (see e.g. \cite[pg. 46]{Lang}) defines these functions as elements of $\QQ(\!(e^{2\pi iz})\!)[\![e^{2i\pi\tau},k]\!]$.  We let $R=\Z[a_1, a_2, a_3, \frac{1}{2}a_4]\subset \QQ(\!(e^{2\pi iz})\!)[\![e^{2i\pi\tau},k]\!]$ be the corresponding  subalgebra of $\QQ(\!(e^{2\pi iz})\!)[\![e^{2i\pi\tau},k]\!]$.

As the general Weierstrass equation is of the form
\[
y^2=4x^3-g_2(\tau)x -g_3(\tau)
\]
and is parametrized by $(x,y)=({\wp}(z,\tau), {\wp}'(z,\tau))$, the functions ${\wp}(z,\tau), {\wp}'(z,\tau), g_2(\tau)$ are algebraically independent in $\QQ(\!(e^{2\pi iz})\!)[\![e^{2i\pi\tau},k]\!]$ over $\QQ$, and thus $R$ is a polynomial ring over $\Z$. Giving $a_i$ degree $-i$ gives $R$ a grading.

 It follows directly from  \cite[Lemma~44]{BB} that $\tilde{\Ell}$ is contained in the subring $R$ and that $\phi_E:\Laz
 \to R$ is a homomorphism of graded rings.\footnote{See \cite[\S5.1]{BB}. Again, we have a different normalization from that of \cite{BB}, where they use the function $\Phi(t, z;\tau)$ instead of $\Phi(\frac{t}{2\pi i}, z;\tau)$. This accounts for the factors of $2\pi i$.} This is accomplished as follows. We let $E_R\to\Spec R$ be the elliptic curve   over $R$ defined as the base change from the Weierstrass curve on $A=\ZZ[\mu_1,\mu_2,\mu_3,\mu_4,\mu_6]$ via the map of rings $\varphi: A\to R$
\[
\mu_1\mapsto2a_1,\ \mu_2\mapsto 3a_2-a_1^2,\
\mu_3\mapsto-a_3,\
\mu_4\mapsto -\frac{1}{2}a_4+3a_2^2-a_1a_3,\
\mu_6\mapsto0.
\]
The chosen parameter $t$ on the Weierstrass curve gives by base-change a parameter $t_R$ along the zero section of $E_R$. This gives us a formal group law over $R$, this just being the one induced from the TMF formal group law through change of coefficient ring via the map $\varphi$. An explicit isomorphism of this formal group law with Krichever's elliptic formal group law (after extending the coefficient ring from $\tilde{\Ell}$ to $R$) is given in \cite[Lemma~44]{BB}.

\begin{remark} We shall see below that $\tilde\Ell_\Q=\Q[a_1,a_2,a_3,a_4]$. However, as noted by H\"ohn \cite{H} and Totaro \cite[\S6]{T}, $\tilde\Ell$ itself is not finitely generated over $\Z$. 
\end{remark}

Let $\Ell=\ZZ[a_1,a_2,a_3,\frac{1}{2}a_4][\Delta^{-1}]$, where $\Delta$ is the discriminant of $E_R/R$. An explicit formula for $\Delta$ is
\begin{align}\label{discr}
\Delta&=36(-4a_1a_3-a_4+6a_2^2)^2 a_2^2-8(-4a_1a_3-a_4+6a_2^2)^3\\&\hskip30pt-27a_3^4
+108(-4a_1a_3-a_4+6a_2^2) a_2 a_3^2-432a_2^3 a_3^2.\notag
\end{align}

\begin{theorem}[Theorem~\ref{thm: ellip_exact_intro}]\label{thm: ellip_exact}
The Krichever elliptic formal group law $(\Ell[1/2],F_{Kr})$ is Landweber exact. Therefore, for $k$ a field of exponential characteristic $p$,  the oriented cohomology theory  $\Ell[1/2p]^*:=\MGL^*\otimes_{\Laz}\Ell[1/2p]^*$ on $\Sm_k$  is represented by a motivic oriented cohomology theory $\sEll[1/2p]$ on $\Sm_k$.
\end{theorem}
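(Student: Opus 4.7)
The plan is to apply Theorem~\ref{thm: Landweber}: once we establish that the Krichever formal group law $(\Ell[1/2], F_{Kr})$ is Landweber exact, the representability of $\Ell[1/2]^*$ by a motivic oriented cohomology theory $\sEll[1/2]$ is immediate. Accordingly, the entire task is to verify, for every prime $\ell$ and every $n \ge 0$, that multiplication by $v_n$ is injective on $\Ell[1/2]/(v_0,\ldots,v_{n-1})$.

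At the prime $\ell = 2$ the condition is vacuous since $\Ell[1/2]/2 = 0$, which explains why inverting $2$ is built into the statement. For odd primes I would exploit the identification, recalled in the excerpt just before the theorem (following \cite[Lemma~44]{BB}), of $F_{Kr}$ as the pullback along $\varphi \colon A \to R$ of the universal Weierstrass formal group law on $A = \ZZ[\mu_1, \ldots, \mu_6]$. Since the two laws agree up to strict isomorphism, the classifying map $\Laz \to R$ factors as $\Laz \to A \to R$, so the image of each $v_n$ in $\Ell[1/2]$ is $\varphi(v_n^A)$. The classical Landweber-Ravenel-Stong theorem asserts Landweber exactness of the Weierstrass formal group law over $A[\Delta^{-1}]$, giving us the desired regular-sequence property for the $v_n^A$'s there.

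The core technical step is then to transport this regularity along $\varphi$. For primes $\ell \ge 5$ I would localize at $\ell$ (inverting in particular $3$): a direct inspection of the formulas displayed in the excerpt shows that after inverting $6$, $\varphi$ induces a ring isomorphism $(A/(\mu_6))[1/6] \simeq R[1/6]$ intertwining the Weierstrass discriminant with the displayed $\Delta$, and hence the Landweber conditions at such $\ell$ reduce to the classical Weierstrass case, combined with the observation that reducing modulo the regular element $\mu_6$ preserves Landweber exactness --- a consequence of the fact that $\{\mu_6 = 0\}$ meets the supersingular strata of the Weierstrass family transversely over $A[\Delta^{-1}]$.

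The main obstacle will be the prime $\ell = 3$: here $3$ cannot be inverted, and modulo $3$ the image of $\varphi$ in $R/3$ is the proper subring $\mathbb{F}_3[a_1, a_3, a_4]$ omitting $a_2$, so the reduction above collapses. I expect the cleanest route at this prime is to bypass $\varphi$ altogether and compute $v_1, v_2 \bmod 3$ for $F_{Kr}$ directly, as explicit polynomials in $a_1, a_2, a_3, a_4$, using the Hirzebruch exponential $\lambda_E(t) = t/Q(t)$ of Section~\ref{sec:ellip group law}. Using the explicit form of $\Delta$ displayed before the theorem, one then checks that $v_1 \bmod 3$ is a non-zero divisor in $\Ell[1/2]/3$ and that $v_2$ becomes a unit in $\Ell[1/2]/(3, v_1)$. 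Combining this direct verification at $\ell = 3$ with the reduction to the classical case at all other odd primes, Theorem~\ref{thm: Landweber} then yields the motivic oriented cohomology theory $\sEll[1/2]$ representing $\Ell[1/2]^*$.
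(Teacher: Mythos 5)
Your overall frame (reduce everything to Theorem~\ref{thm: Landweber} and check the $v_n$-conditions prime by prime, with $\ell=2$ vacuous because $2$ is inverted) matches the paper, but the route you take for odd primes has two genuine gaps, and both occur exactly where the real work lies. First, at $\ell\ge 5$ you assert that Landweber exactness of the Weierstrass law over $A[\Delta^{-1}]$ descends to the quotient by $\mu_6$ because $\{\mu_6=0\}$ "meets the supersingular strata transversely." That transversality is not verified, and it is not a soft fact: killing $\mu_6$ could a priori land the whole mod-$\ell$ family inside the supersingular locus, which would make $v_1$ a zero divisor (indeed this is precisely what happens at $\ell=2$, per Remark~\ref{rmk: no_integral_ell}, so the phenomenon is real). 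Making your transversality claim precise amounts to showing that the restricted family over $\mathbb{F}_\ell[a_1,\dots,a_4][\Delta^{-1}]$ has a non-supersingular geometric fiber, i.e.\ that $v_1\neq 0$ there --- which is the very thing to be proved, not a consequence of the classical theorem. Second, at $\ell=3$ you defer to an explicit computation of $v_1=a_3$ and $v_2=a_9$ of the $3$-series as polynomials in $a_1,\dots,a_4$ and a verification that $v_2$ is a unit modulo $(3,v_1)$; this computation is not performed, and it is by far the heaviest step of the whole argument if done by hand.

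The paper avoids both problems by never trying to transport exactness along $\varphi$. It works directly with the family $E_R\to\Spec R$ (which is an honest family of elliptic curves by construction, regardless of whether $\varphi$ is surjective mod $\ell$) and argues uniformly for all $\ell>2$: $v_0=\ell$ is injective because $\Ell[1/2]$ is a domain; $v_1$ is injective because the mod-$\ell$ family is non-isotrivial (it contains the Legendre family, so the $j$-invariant is non-constant) and hence has a non-supersingular member, so $v_1\neq 0$ in the domain $R_\ell[\Delta^{-1}]$; and $v_2$ is a unit modulo $(\ell,v_1)$ because the formal group of an elliptic curve over a field of positive characteristic has height at most $2$, so no closed point can kill $v_2$ once $\Delta$ is inverted. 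This height argument is exactly the missing justification for both of your gaps, and it renders the case split $\ell=3$ versus $\ell\ge5$ unnecessary. If you want to salvage your reduction-to-Weierstrass strategy, you still have to supply this geometric input, at which point you may as well run the paper's argument directly.
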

The second statement follows from the first by Theorem~\ref{thm: Landweber}.  The Landweber exactness of $(\Ell[1/2],F_{Kr})$  can be proved following the proof of the Landweber exactness of the TMF formal group law. As a sketch, we mention that the injectivity of $v_i$ for $i>0$ is related to the height of the formal group law of these curves and that the only possible height of these curves are 1, 2, or infinity. The generic members in our family all have height one, which shows that multiplication by $v_1$ is injective.  Finally, we claim that  $v_2$ in $R_l[\Delta^{-1}]/(v_1)$ is a unit. This claim implies
that multiplication by $v_2$ is injective and that  $R_l[\Delta^{-1}]/(v_1,v_2)=0$, which implies the required injectivity for $n\geq 3$. To verify the claim, assume otherwise, then $v_2$ is contained in a maximal ideal $\mathfrak{m} \subset R_l[\Delta^{-1}]/(v_1)$. Therefore, the fiber of the family of curves over this closed point has associated formal group law with height greater than $2$. This
contradicts with the fact that the height of the formal group law of elliptic curves over a field
of characteristic $l > 0$ can only be 1 or 2.

\section{Flops in the cobordism ring}\label{sec:CobordFlop}

\subsection{Double point cobordism}

Following \cite{LP}, we  have the double-point cobordism $\omega^*(X)$ of $X\in \Sm_k$, defined as the graded abelian group $M^*_+(X)$  generated by  projective morphisms $Y\to X$, with $Y\in \Sm_k$ irreducible, modulo the  \textit{double point relation} of \cite[Definitions 0.1, 0.2]{LP}. It is shown in \cite[Theorem 1]{LP} that if $k$ has characteristic zero, then $\omega^*(X)$ is  canonically isomorphic to  $\Omega^*(X)$. We will use a weaker version of this theorem, which holds in a characteristic free fashion.

\begin{prop}[Levine and Pandharipande, \hbox{\cite[Proposition 3.5]{LM}}]
For any field $k$ and scheme $X\in \Sm_k$ of finite type over $k$, the natural projection $\Pi:M^*_+(X)\to \Omega^*(X)$ factors through $\omega^*(X)$.
\end{prop}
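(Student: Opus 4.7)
The content of the proposition is the assertion that, in $\Omega^*(X)$, the combination $[Y_\zeta\to X]-[A\to X]-[B\to X]+[\PP(\pi)\to X]$ vanishes for every double-point degeneration $g:Y\to X\times\PP^1$. Since all four classes are pushforwards along $f:=p_1\circ g:Y\to X$, I would first reduce to proving the identity
\[
[Y_\zeta\hookrightarrow Y]\;=\;[A\hookrightarrow Y]+[B\hookrightarrow Y]-i_{D*}[\PP(\pi)\to D]
\]
in $\Omega^*(Y)$, where $i_D:D\hookrightarrow Y$ is the inclusion of the double locus.

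The key device is the line bundle $L:=\pi^*\sO_{\PP^1}(1)$ on $Y$, whose first Chern class can be computed in two ways. Writing $[A],[B]\in\Omega^1(Y)$ for $[A\hookrightarrow Y],[B\hookrightarrow Y]$: from $L\cong\sO_Y(Y_\zeta)$ one obtains $c_1(L)=[Y_\zeta\hookrightarrow Y]$, since $Y_\zeta$ is a smooth divisor; from $L\cong\sO_Y(A)\otimes\sO_Y(B)$ and the formal group law $F_\Omega(u,v)=u+v+uv\,G(u,v)$ of $\Omega^*$ one obtains
\[
c_1(L)\;=\;F_\Omega([A],[B])\;=\;[A]+[B]+[A]\cdot[B]\cdot G([A],[B]).
\]
Transversality of $A$ and $B$ along $D$ gives $[A]\cdot[B]=i_{D*}(1_D)$, and the projection formula combined with $\sO_Y(A)|_D\cong N_DB$ and $\sO_Y(B)|_D\cong N_DA$ then rewrites the correction term as $i_{D*}\,G(c_1(N_DB),c_1(N_DA))$.

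The crucial remaining step is to identify this correction with $-i_{D*}[\PP(\pi)\to D]$. The trivialization $L|_D\cong\sO_D$, a consequence of $\pi(D)=0$, gives $c_1(N_DA)+_{F_\Omega}c_1(N_DB)=0$; this simultaneously yields the canonical isomorphism $\PP(\sO_D\oplus N_DA)\cong\PP(\sO_D\oplus N_DB)=\PP(\pi)$ and lets one apply Quillen's formula~\eqref{Quill-formula} to $\sO_D\oplus N_DA$ with $f(t)=1$, producing the symbolic identity $[\PP(\pi)\to D]=\tfrac{1}{c_1(N_DA)}+\tfrac{1}{c_1(N_DB)}$. Substituting $F_\Omega(c_1(N_DA),c_1(N_DB))=0$ into the defining relation $uv\,G(u,v)=F_\Omega(u,v)-u-v$ then matches this expression to $-G(c_1(N_DB),c_1(N_DA))$, completing the identification. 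All of the tools used -- first Chern class operators, transverse intersection, projection formula, Quillen's formula -- are part of the structure of $\Omega^*$ as a universal oriented Borel-Moore $\Laz$-functor of geometric type, so the argument goes through uniformly in all characteristics.

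The main obstacle is this final matching. The individual summands $\tfrac{1}{c_1(N_DA)}$ coming from Quillen's formula are purely formal and only combine to a genuine element of $\Omega^*(D)$ after using the constraint $F_\Omega(c_1(N_DA),c_1(N_DB))=0$. Tracking sign conventions carefully and confirming that the symmetric expression $-G(c_1(N_DA),c_1(N_DB))$ produced by the formal-group-law manipulation really agrees with $[\PP(\pi)\to D]$ is where the double-point geometry interacts most directly with the structure of the Lazard ring.
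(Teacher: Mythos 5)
Your argument is correct and is essentially the proof of Levine--Pandharipande that the paper itself invokes: the paper gives no independent argument, only the citation to \cite{LP} together with the remark that the proof there uses only smooth pullback, projective pushforward, first Chern classes and external products, and hence is characteristic-free. Your reconstruction --- computing $c_1(\pi^*\sO_{\PP^1}(1))$ two ways, using transversality to localize the correction term on $D$, and identifying $-G\bigl(c_1(N_DB),c_1(N_DA)\bigr)$ with $[\PP(\pi)\to D]$ via $F_\Omega\bigl(c_1(N_DA),c_1(N_DB)\bigr)=0$ --- is exactly that argument, correctly carried out with precisely those tools.
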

The proof in \cite{LP} uses only the existence of smooth pull-back, projective push-forward, the first Chern class of a line bundle, and external product, which means it does not depend on any assumption on $k$. Thus, the double point relation also holds when the field $k$ has positive characteristic.

We note that $\omega^*$ has the following structures:
\begin{enumerate}
\item pullback maps $f^*:\omega^*(X)\to \omega^*(Y)$ for each smooth morphism $f:Y\to X$ in $\Sm_k$.
\item push-forward maps $f_*:\omega^*(Y)\to \omega^{*-d}(X)$ for each projective morphism $f:Y\to X$ of relative dimension $d$ in $\Sm_k$.
\item associative, commutative external products, and an identity element $1\in\omega^0(k)$.
\end{enumerate}
The pullback and pushforward maps are functorial, and  are compatible with the external products.

Composing the map $\omega^*\to \Omega^*$ with the natural transformation $\Theta_{\MGL}$ gives us the  transformation
\[
\theta_{\MGL}:\omega^*\to \MGL^*,
\]
natural with respect to smooth pullback, projective push-forward, external products and unit.

Let $F \subseteq X$ be a smooth closed subscheme of some $X\in \Sm_k$. The double point relation yields the following blow-up formula in
$\omega^*(X)$, and hence in $\Omega^*(X)$ and $\MGL^*(X)$:
\begin{equation}\label{eqn: blow_up0}
1_X = [Bl_F X\to X]+[\PP(N_F X \oplus \sO)\to X] -[\PP_{\PP(N_FX)}(\sO_{N_F X}(1) \oplus \sO)\to X]
\end{equation}
where $Bl_F X$ is blow-up of $X$ along $F$, and $N_F X$ is the normal bundle of $F$. This is proved by the usual method of   deformation to the normal cone. In case $X$ is projective over $k$, pushing forward to $\Spec k$ gives the relation in $\omega^*(k)$,  $\Omega^*(k)$ and $\MGL^*(k)$
\begin{equation}\label{eqn: blow_up}
[X] = [Bl_F X]+[\PP(N_F X \oplus \sO)] -[\PP_{\PP(N_FX)}(\sO_{N_F X}(1) \oplus \sO)]
\end{equation}

We say two smooth projective $n$-folds $X_1$ and $X_2$ are related by a flop if we have the following diagram of projective birational morphisms:
\begin{equation}
\xymatrix{
& \widetilde{X}\ar[ld]\ar[rd] & \\
X_1\ar[rd]^{p_1}& & X_2\ar[ld]_{p_2}\\
& Y & \\
}
\end{equation}
Here $Y$ is a singular projective $n$-fold with singular locus $Z$, such that $Z$ is smooth of dimension $n-2k+1$. We assume in addition that there exist rank $k$ vector bundles $A$ and $B$ on $Z$, such that the exceptional locus $F_1$ in $X_1$ is the $\PP^{k-1}$-bundle $\PP(A)$ over $Z$, with normal bundle $N_{F_1}X_1=B\otimes \sO_A(-1)$. Similarly, the the exceptional locus $F_2$ in $X_2$ is $\PP(B)$, with normal bundle $N_{F_2}X_2=A\otimes \sO_B(-1)$.
Let $Q^3\subset \PP^4$ denote the 3-dimensional quadric with an ordinary double point $v$, defined by the equation $x_1x_2=x_3x_4$.  We say that $X_1$ and $X_2$ are related by a {\em classical} flop if in addition  $k=2$, and  along $Z$, $(Y,Z)$ is {\em Zariski} locally isomorphic to
 $(Q^3\times Z, v\times Z)$.


We assume now $X_1$ and $X_2$ are related by a   flop.
Let $X=X_1$ and $F=F_1$, the terms on the right hand side of formula \eqref{eqn: blow_up} become:
\begin{align*}
&Bl_F X=\widetilde{X},\\
&\PP(N_F X \oplus \sO)=\PP_{\PP(A)} (B\otimes \sO_A(-1) \oplus \sO),\\
&\PP_{\PP(N_X)}(\sO_{N_F X}(1) \oplus \sO)=\PP_{\PP(B\otimes \sO_A(-1))}(\sO_{B\otimes \sO_A(-1)}(1) \oplus \sO),
\end{align*}
where $\PP_{\PP(A)} (B\otimes \sO_A(-1) \oplus \sO)$ is a projective bundle over $\PP(A)$,  which in turn is a projective bundle over $Z$; and $\PP_{\PP(B\otimes\sO_A(-1))}(\sO_{B\otimes \sO_A(-1)}(1) \oplus \sO)$ is a projective bundle over $\PP(B\otimes\sO_A(-1))$,  which is a projective bundle over $\PP(A)$.

Thus, we get the following immediate lemma.
\begin{lemma}\label{lem: double_pt}
In the cobordism ring $\MGL^*(k)$, we have
\begin{align*}
[X_1]-[X_2]=&[\PP_{\PP(A)} (B\otimes \sO_A(-1) \oplus \sO)]-[\PP_{\PP(B\otimes\sO_A(-1))}(\sO_{B\otimes \sO(-1)}(1) \oplus \sO)]\\
&-[\PP_{\PP(B)} (A\otimes \sO_B(-1) \oplus \sO)]+[\PP_{\PP(A\otimes\sO_B(-1))}(\sO_{A\otimes \sO(-1)}(1) \oplus \sO)].
\end{align*}
In particular, $[X_1]-[X_2]$ in $\MGL^*(k)$ comes from an element in $\MGL^*(Z)$.
\end{lemma}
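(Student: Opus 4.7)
The plan is to apply the blow-up formula \eqref{eqn: blow_up} twice, once to each of $X_1$ and $X_2$ with its exceptional locus, and then subtract. Since by hypothesis $\widetilde{X}\to X_1$ is the blow-up of $X_1$ along $F_1=\PP(A)$ and $\widetilde{X}\to X_2$ is the blow-up of $X_2$ along $F_2=\PP(B)$, the two applications produce the same term $[\widetilde{X}]$, which will cancel in the difference $[X_1]-[X_2]$.

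Concretely, I would first substitute $F=F_1=\PP(A)$ with normal bundle $N_{F_1}X_1 = B\otimes\sO_{\PP(A)}(-1)$ into \eqref{eqn: blow_up} to obtain
\[
[X_1]=[\widetilde{X}] + [\PP_{\PP(A)}(B\otimes\sO_{\PP(A)}(-1)\oplus\sO)] - [\PP(\sO_{\PP(B\otimes\sO(-1))}(1)\oplus\sO)]
\]
in $\MGL^*(k)$, and symmetrically apply it to $X_2$ using $F_2=\PP(B)$ with $N_{F_2}X_2=A\otimes\sO_{\PP(B)}(-1)$ to obtain a parallel identity. Subtracting the two identities then yields the stated formula for $[X_1]-[X_2]$.

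For the final assertion, I would observe that each of the four terms on the right-hand side is the class of a smooth projective scheme $E$ equipped with a (tower of) projective bundle morphism(s) $\pi: E\to Z$. If $q: Z\to \Spec k$ denotes the structure morphism, then $[E\to\Spec k]=q_*\pi_*(1_E)$, so each class lies in the image of $q_*: \MGL^*(Z)\to \MGL^*(k)$. Consequently $[X_1]-[X_2]$ is the image under $q_*$ of the corresponding alternating sum of the four pushforwards $\pi_*(1_E)\in \MGL^*(Z)$.

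The argument is essentially a direct substitution, so there is no real obstacle to overcome; the only step demanding attention is to correctly match the exceptional locus and normal bundle on each side of the flop, and this information is built into the very definition of a flop recalled just above the lemma.
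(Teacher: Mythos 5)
Your proposal is correct and is essentially identical to the paper's argument: the paper likewise applies the blow-up relation \eqref{eqn: blow_up} to $X_1$ along $F_1=\PP(A)$ (with $Bl_{F_1}X_1=\widetilde{X}$ and $N_{F_1}X_1=B\otimes\sO_{\PP(A)}(-1)$) and symmetrically to $X_2$, subtracts so that $[\widetilde{X}]$ cancels, and deduces the last assertion from the fact that each remaining term is an iterated projective bundle over $Z$.
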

The second claim follows from the observation that each of the projective bundles are smooth varieties over $Z$. We will abuse notation by denoting a lifting of $[X_1]-[X_2]$ to $\MGL^*(Z)$ by $[X_1]-[X_2]$ itself.

\subsection{Flops in the cobordism ring}\label{sec: flop in cobord}
Since each term in the formula in Lemma~\ref{lem: double_pt} is a iterated projective bundle over $Z$, we will apply Quillen's formula iteratively to each term, to calculate the fundamental class of the iterated projective bundles in $\MGL^*(Z)$.

\begin{prop}\label{prop:Diff_Omega}
Let $a_1,\dots, a_k$ be the Chern roots of the bundle $A$ over $Z$ and let $b_1, \dots,b_k$ be the Chern roots of the bundle $B$ over $Z$, all Chern roots to be taken in $\MGL^*$. Then in $\MGL^*(Z)$, we have
\begin{equation*}
[X_1]-[X_2]=\sum_{m=1}^k\left(\frac{1}{\prod_{i=1}^k(b_i+_\Omega a_m)\prod_{l\neq m}(a_l-_\Omega a_m)}-\frac{1}{\prod_{i=1}^k(a_i+_\Omega b_m)\prod_{l\neq m}(b_l-_\Omega b_m)}\right).
\end{equation*}
\end{prop}

The rest of this subsection is devoted to prove this proposition. We let $u_B:=
c_1(\sO_{B\otimes \sO_A(-1)}(1))$, $v_A:=c_1(\sO_A(1))$, $\PP^B:=\PP_{\PP(A)}(B\otimes \sO_A(-1))$,
$\PP^A:=\PP_{\PP(B)}(A\otimes \sO_B(-1))$.

\subsubsection{The term $\PP_{\PP^B}(\sO_{B\otimes \sO_A(-1)}(1) \oplus \sO)$}
We first prove the following
\begin{lemma}
We have $[\PP_{\PP^B}(\sO_{B\otimes \sO_A(-1)}(1) \oplus \sO)]=[\PP_{\PP^A}(\sO_{A\otimes \sO_B(-1)}(1)\oplus \sO)]$ in $\MGL^*(Z)$.
\end{lemma}

\begin{proof}
Let $\pi_1: \PP_{\PP^B}(\sO_{B\otimes \sO_A(-1)}(1) \oplus \sO) \to \PP^B$ be the natural projection.  The two Chern roots of $\sO_{B\otimes \sO_A(-1)}(1)\oplus \sO$ are $u_B$ and $0$, so
applying Quillen's formula (\ref{Quill-formula}) with $f_1(t) \equiv 1$ being the fundamental class, we have
\begin{equation}\label{term 2}
\pi_{1*}([\PP_{\PP^B}(\sO_{B\otimes \sO(-1)}(1) \oplus \sO)])=\frac{1}{0-_\Omega u_B}+\frac{1}{u_B-_\Omega 0}=\frac{1}{-_\Omega u_B}+\frac{1}{u_B}.
\end{equation}

Next, let $\pi_2: \PP_{\PP(A)} (B\otimes \sO_A(-1) ) \to \PP(A)$ be the projection.
The Chern roots of the bundle $B\otimes \sO_A(-1)$ are $b_i-_\Omega v_A$ $i=1,\dots,k$. Applying Quillen's formula (\ref{Quill-formula}) with $f_2(t) =\frac{1}{-_\Omega t}+\frac{1}{t}$, we get
\begin{align*}
\pi_{2*}(f_2(u_B))
&=\sum_{i=1}^k\frac{(-_\Omega(-_\Omega(b_i-_\Omega v_A)))^{-1}+(-_\Omega(b_i-_\Omega v_A))^{-1}}{\prod_{j\neq i}((b_j-_\Omega v_A)-_\Omega(b_i-_\Omega v_A))}\\
&=\sum_{i=1}^k\frac{(b_i-_\Omega v_A)^{-1}+(-_\Omega b_i+_\Omega v_A)^{-1}}{\prod_{j\neq i}(b_j-_\Omega b_i)}.
\end{align*}

Finally, let $\pi_3: \PP(A) \to Z$ be the projection.   Letting
$f_3(t):=\sum_{i=1}^k\frac{(b_i-_\Omega t)^{-1}+(-_\Omega b_i+_\Omega t)^{-1}}{\prod_{j\neq i}(b_j-_\Omega b_i)}$, then $\pi_*(\PP_{\PP^B}(\sO_{B\otimes \sO_A(-1)}(1) \oplus \sO))=\pi_{3*}(f_3(v_A))$ and
Quillen's formula (\ref{Quill-formula}) yields
\[
\pi_{3*}(f_3(v_A))
=\sum_{i=1}^k\sum_{m=1}^k\frac{(b_i+_\Omega a_m)^{-1}+(-_\Omega b_i-_\Omega a_m)^{-1}}{\prod_{j\neq i}(b_j-_\Omega b_i)\prod_{l\neq m}(a_l-_\Omega a_m)}.
\]

Similarly, for the projective bundle $\pi': \PP_{\PP^A}(\sO_{A\otimes \sO_B(-1)}(1) \oplus \sO) \to Z$, we have:
\[
\pi'_*([\PP_{\PP^A}(\sO_{A\otimes \sO_B(-1)}(1) \oplus \sO)])=\sum_{i=1}^k\sum_{m=1}^k\frac{(a_i+_\Omega b_m)^{-1}+(-_\Omega a_i-_\Omega b_m)^{-1}}{\prod_{j\neq i}(a_j-_\Omega a_i)\prod_{l\neq m}(b_l-_\Omega b_m)}
\]
and the lemma follows.
 \end{proof}

\subsubsection{The term $\PP_{\PP(A)} (B\otimes \sO_A(-1) \oplus \sO)$}

Let $\pi_1: \PP_{\PP(A)} (B\otimes \sO_A(-1) \oplus \sO) \to \PP(A)$ be the projection.
The Chern roots of the bundle $B\otimes \sO_A(-1) \oplus \sO$ are  $b_i-_\Omega v_A$, $i=1,\dots,k$, and $0$.
Applying Quillen's formula (\ref{Quill-formula}) with $f_1(t) \equiv 1$, we get
\[
\pi_{1*}([\PP(B\otimes \sO_A(-1) \oplus \sO)])=\sum_{i=1}^k\frac{1}{(v_A-_\Omega b_i)\prod_{j\neq i}(b_j-_\Omega b_i)}+\frac{1}{\prod_{i=1}^k(b_i-_\Omega v_A)}.
\]

Now let $\pi_2: \PP(A) \to Z$ be the projection.  Letting $f_2(t):=\sum_{i=1}^k[(t-_\Omega b_i)\prod_{j\neq i}(b_j-_\Omega b_i)]^{-1}+(\prod_{i=1}^k(b_i-_\Omega t))^{-1}$, we have $\pi_*([\PP_{\PP(A)} (B\otimes \sO_A(-1) \oplus \sO)])=\pi_{2*}(f_2(v_A))$ and 
Quillen's formula (\ref{Quill-formula}) gives
\[
\pi_{2*}(f_2(v_A))=\sum_{m=1}^k\frac{\sum_{i=1}^k((-_\Omega a_m-_\Omega b_i)\prod_{j\neq i}(b_j-_\Omega b_i))^{-1}+(\prod_{i=1}^k(b_i+_\Omega a_m))^{-1}}{\prod_{l\neq m}(a_l-_\Omega a_m)}\\
\]
Similarly, for the bundle $\pi': \PP_{\PP(B)} (A\otimes \sO_A(-1) \oplus \sO) \to Z$, we have:
\[
\pi'_*([\PP_{\PP(B)}(A\otimes \sO_A(-1) \oplus \sO)])
=\sum_{m=1}^k\frac{\sum_{i=1}^k((-_\Omega b_m-_\Omega a_i)\prod_{j\neq i}(a_j-_\Omega a_i))^{-1}+(\prod_{i=1}^k(a_i+_\Omega b_m))^{-1}}{\prod_{l\neq m}(b_l-_\Omega b_m)}
\]
Therefore,
\begin{multline*}
\pi_*([\PP_{\PP(A)} (B\otimes \sO_A(-1) \oplus \sO)])-\pi'_*([\PP_{\PP(B)} (A\otimes \sO_B(-1) \oplus \sO)])\\
=\sum_{m=1}^k\left(\frac{1}{\prod_{i=1}^k(b_i+_\Omega a_m)\prod_{l\neq m}(a_l-_\Omega a_m)}-\frac{1}{\prod_{i=1}^k(a_i+_\Omega b_m)\prod_{l\neq m}(b_l-_\Omega b_m)}\right).
\end{multline*}

This finishes the proof of proposition~\ref{prop:Diff_Omega}.

\subsection{Flops in the elliptic cohomology ring}\label{sec: flop in ell}
In this subsection, we prove the following Proposition.
\begin{prop}\label{prop: flop in ell}  Suppose $X_1$ and $X_2$ are smooth projective varieties related by a flop.
Notations as above, in the ring $\Ell^*_\QQ(Z)$, we have 
\[
[\PP_{\PP(A)} (B\otimes \sO_A(-1) \oplus \sO)]=[\PP_{\PP(B)} (A\otimes \sO_B(-1) \oplus \sO)].
\] In particular, we have $[X_1]-[X_2]=0$ in $\Ell_\QQ^*(Z)$, and hence $[X_1]-[X_2]=0$ in $\tilde{\Ell}[1/p]^*(k)$, where $p$ is the exponential characteristic of $k$.
\end{prop}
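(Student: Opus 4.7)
The plan is to use Proposition~\ref{prop:Diff_Omega} to write the class $X_1-X_2\in\Ell^*_\Q(Z)$ as an explicit four-term expression in the elliptic Chern roots of $A$ and $B$, translate that expression into an identity for the Baker-Akhiezer function $\Phi(t,z)$, and deduce that identity from a classical $\sigma$-function relation.

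By the identification \eqref{prop: twistBM} applied to the elliptic formal group law $(\Ell_\Q, F_E)$, the ring $\Ell^*_\Q(Z)=\MGL^*(Z)\otimes_{\Laz}\Ell_\Q$ is the twist $(\CH^*_\Q(Z)\otimes\Ell_\Q)_\tau$ determined by the Krichever exponential $\lambda(t)=t/Q(t)$. If $\alpha_1,\alpha_2$ and $\beta_1,\beta_2$ denote the Chern roots of $A$ and $B$ in $\CH^*_\Q(Z)$, then under this identification the elliptic Chern roots become $a_i=\lambda(\alpha_i)$ and $b_j=\lambda(\beta_j)$, while $a_i\pm_E b_j=\lambda(\alpha_i\pm\beta_j)$ since the exponential converts the additive group law to $F_E$. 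Using the explicit form $\lambda(t)=2\pi i/(e^{kt}\Phi(t,z))$,
$$\frac{1}{a_i\pm_E b_j}\;=\;\frac{e^{k(\alpha_i\pm\beta_j)}\,\Phi(\alpha_i\pm\beta_j,z)}{2\pi i}.$$
Substituting into the formula of Proposition~\ref{prop:Diff_Omega} (with the universal operations $\pm_\Omega$ replaced by their elliptic counterparts $\pm_E$), a direct accounting of the arguments shows that every one of the four terms acquires the same exponential prefactor $e^{k(\alpha_1+\alpha_2+\beta_1+\beta_2)}/(2\pi i)^3$, which therefore cancels from the total sum.

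Writing $\Phi(\cdot)$ for $\Phi(\cdot,z)$, the vanishing of $X_1-X_2$ is thus equivalent to the four-variable identity
\begin{align*}
&\Phi(\alpha_1{+}\beta_1)\Phi(\alpha_1{+}\beta_2)\Phi(\alpha_2{-}\alpha_1)+\Phi(\alpha_2{+}\beta_1)\Phi(\alpha_2{+}\beta_2)\Phi(\alpha_1{-}\alpha_2) \\
&\qquad =\Phi(\alpha_1{+}\beta_1)\Phi(\alpha_2{+}\beta_1)\Phi(\beta_2{-}\beta_1)+\Phi(\alpha_1{+}\beta_2)\Phi(\alpha_2{+}\beta_2)\Phi(\beta_1{-}\beta_2).
\end{align*}
Expanding each $\Phi$ as a $\sigma$-ratio via $\Phi(t,z)=e^{\zeta(z)t/(2\pi i)}\sigma(\tfrac{t}{2\pi i}-z)/(\sigma(\tfrac{t}{2\pi i})\sigma(-z))$ and applying the same argument-summing trick, the $e^{\zeta(z)(\cdot)/(2\pi i)}$ prefactors cancel and the $\sigma(-z)$ denominators factor out trivially; what remains is a homogeneous four-term identity among products of shifted $\sigma$-values. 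This is a classical consequence of Riemann's theta relation (equivalently, Fay's trisecant identity specialised to an elliptic curve) applied with parameters $\alpha_i,\beta_j,z$. Once the identity holds in $\Ell^*_\Q(Z)$, the statement $X_1-X_2=0$ in $\tilde\Ell^*(k)$ follows by pushing forward along $Z\to\Spec k$ and using that $\tilde\Ell$ embeds into $\Ell_\Q$.

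\textbf{Main obstacle.} The essential step is the final $\sigma$-function identity: the preceding twisting and exponential bookkeeping are routine, but matching the four-term $\Phi$-relation above to a standard form of the Riemann/Fay identity (or verifying it directly from the addition theorem for $\sigma$) is the classical heart of the argument, and this is precisely the ``identity satisfied by the $\sigma$ function'' alluded to in the introduction.
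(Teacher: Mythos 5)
Your proposal follows essentially the same route as the paper: reduce via Proposition~\ref{prop:Diff_Omega}, pull the elliptic Chern roots back through the exponential $\lambda$, observe that the $e^{kt}$ and $e^{\zeta(z)t/2\pi i}$ prefactors cancel because each of the four terms has the same total argument $A_1+A_2+B_1+B_2$, and conclude from a classical $\sigma$-function identity. The only difference is the final reference: the paper invokes the $n=4$ case of the identity $\sum_{r}\prod_j\sigma(x_r-y_j)/\prod_{j\neq r}\sigma(x_r-x_j)=0$ for $\sum x_r=\sum y_r$ (Whittaker--Watson \S 20.53, Ex.~3) rather than Fay's trisecant identity, which in its basic three-term elliptic form is not quite the statement needed here.
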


\begin{remark} Once we know that $[X_1]-[X_2]=0$ in $\Ell_\QQ^*(Z)$, it follows by pushing forward to $\Spec k$ that
$[X_1]-[X_2]=0$ in $\Ell^*_\QQ(k)$. But $[X_1]-[X_2]$ is a well-defined element in $\tilde{\Ell}[1/p]^*(k)$ and $\tilde{\Ell}[1/p]^*(k)\to \Ell_\QQ^*(k)$ is injective, since $\tilde{\Ell}[1/p]^*(k)$ is by construction an integral domain, hence $[X_1]-[X_2]=0$ in $\tilde{\Ell}[1/p]^*(k)$, as claimed above.
\end{remark}

\begin{proof}[Proof of the proposition] We retain the notation of the previous section.
Thanks to Proposition~\ref{prop:Diff_Omega}, we can reduce $[X_1]-[X_2]$ to an explicit element in $\MGL^*(Z)$.
Applying the canonical map $\MGL^*(Z)\to \Ell[1/p]^*(Z)$ to this element,
we have, in the ring $\Ell[1/p]^*(Z)$,
\[
[X_1]-[X_2]
=\sum_{m=1}^k\left(\frac{1}{\prod_{i=1}^k(b_i+_\Omega a_m)\prod_{l\neq m}(a_l-_\Omega a_m)}-\frac{1}{\prod_{i=1}^k(a_i+_\Omega b_m)\prod_{l\neq m}(b_l-_\Omega b_m)}\right).
\]  
We would like to show the above expression is   $0$ in $\Ell^*_\QQ(Z)$.

Recall \eqref{eqn:EllipticExp} the exponential $\lambda(t)$ of our formal group law. Let $A_i:=\lambda^{-1}(a_i)$, and $B_i:=\lambda^{-1}(b_i)$, $i=1,\dots,k$. Then in $\Ell^*_\QQ(Z)$, $[X_1]-[X_2]$ becomes:
\[
\sum_{m=1}^k\left(\prod_{i=1}^k\frac{1}{\lambda}(B_i+ A_m)\prod_{l\neq m}\frac{1}{\lambda}(A_l- A_m)-\prod_{i=1}^k\frac{1}{\lambda}(A_i+ B_m)\prod_{l\neq m}\frac{1}{\lambda}(B_l- B_m)\right).
\]

We expand $1/\lambda$ using the definition
\[\frac{1}{\lambda}(t)=\frac{Q(t)}{t}=\frac{1}{2\pi i}e^{kt}e^{\zeta(z)\frac{t}{2\pi i}}\frac{\sigma(\frac{t}{2\pi i}-z, \tau)}{\sigma(\frac{t}{2\pi i}, \tau)\sigma(-z, \tau)}. 
\]
After factoring out some obvious common terms and deleting these, 
$[X_1]-[X_2]$ becomes
\[
\sum_{m=1}^k\left(\prod_{i=1}^k\frac{\sigma(B_i+  A_m-z)}{\sigma(B_i+ A_m)}\prod_{l\neq m}\frac{\sigma(A_l-A_m-z)}{\sigma(A_l-A_m)}-\prod_{i=1}^k\frac{\sigma(A_i+ B_m-z)}{\sigma(A_i+ B_m)}\prod_{l\neq m}\frac{\sigma(B_l-B_m-z)}{\sigma(B_l- B_m)}\right).
\]

Now let
\[
x_i=A_i, x_{k+j}=-B_j, y_i=A_i-z,y_{k+j}=-B_j+z;\  i,j=1,\dots,k.
\]
Using the fact that $\sigma(z)$ is an odd function, we get, for all $m=1,\dots,k$,
\[
\sigma(-z)\prod_{i=1}^k\frac{\sigma(B_i+ A_m-z)}{\sigma(B_i+ A_m)}\prod_{l\neq m}\frac{\sigma(A_l-  A_m-z)}{\sigma(A_l- A_m)}
=\frac{\prod_{j=1}^k\sigma(y_{k+j}-x_m)\prod_{l=1}^k\sigma(y_l-x_m)}{\prod_{j=1}^k\sigma(x_{k+j}-x_m)\prod_{l\neq m}\sigma(x_l-x_m)}
\]
and
\[
-\sigma(-z)\prod_{i=1}^k\frac{\sigma(A_i+ B_m-z)}{\sigma(A_i+ B_m)}\prod_{l\neq m}\frac{\sigma(B_l-  B_m-z)}{\sigma(B_l-B_m)}
=\frac{\prod_{i=1}^k\sigma(y_i-x_{k+m})\prod_{l=1}^k\sigma(y_{k+l}-x_{k+m})}{\prod_{i=1}^k\sigma(x_i-x_{k+m})\prod_{l\neq m}\sigma(x_{k+l}-x_{k+m})}.
\]

The proposition now follows from the following classical identity for the sigma-function (see \S20.53, Example 3 of \cite{WW}):
Assuming $\sum_{r=1}^n x_r=\sum_{r=1}^n y_r$,  we have
\[
\sum_{r=1}^n\frac{\sigma(x_r-y_1)\sigma(x_r-y_2)\cdots\sigma(x_r-y_n)}
{\sigma(x_r-x_1)\sigma(x_r-x_2)\cdots*\cdots\sigma(x_r-x_n)}=0
\]
with the $*$ denoting that the  term $\sigma(x_r-x_r)$ is to be omitted.
\end{proof}

\section{The algebraic elliptic cohomology ring with rational coefficients}\label{sec:FlopIdeal}

Let $\sI_\fl$ be the ideal in $\MGL_\QQ^*(k)$ generated by differences $[X_1]-[X_2]$, where $X_1$ and $X_2$ are related by a flop and $\sI_\clf\subseteq \sI_\fl$  the ideal in $\MGL_\QQ^*(k)$ generated by differences $[X_1]-[X_2]$, where $X_1$ and $X_2$ are related by a  classical flop. Section~\ref{sec: flop in ell} shows that the elliptic genus $\phi:\MGL^*(k)\to\Ell[1/p]^*(k)$ factors through the quotient $\MGL^*(k)/\sI_\fl$.

The next proposition is proved in \cite[\S5]{T}.  However, as Totaro's proof uses some non-algebraic objects, we give a completely algebraic (albeit quite similar) proof in an appendix.

\begin{prop}\label{prop:Ideal}
The ideal $\sI_\clf$ in $\MGL^*_\QQ(k)$ contains a system of polynomial generators $x_n$ of $\MGL^*_\QQ(k)$ in degree $n\le -5$.
\end{prop}

Our main results in this section are:

\begin{prop}\label{prop: deg<5}
The classifying map  $\phi_E:\MGL^*(k)\to  \QQ(\!(e^{2\pi iz})\!)[\![e^{2i\pi\tau},k]\!]$ descends to define an isomorphism of
$\MGL_\QQ^*(k)/\sI_\clf$ with the polynomial subalgebra   $R_\QQ=\QQ[a_1, a_2, a_3, a_4]$ of $\QQ(\!(e^{2\pi iz})\!)[\![e^{2i\pi\tau},k]\!]$, with $a_i$ in degree $-i$.
\end{prop}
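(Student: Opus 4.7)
The plan is to assemble the two preparatory pieces already in place: Proposition~\ref{prop: flop in ell} (which gives $\mathcal I_{\fl}\subset\ker\phi_E$, hence $\mathcal I_{\clf}\subset\ker\phi_E$), and the previous proposition together with the explicit low-degree computation of $\phi$ on $W_1,W_2,W_3,W_4$. First I would note that since $\mathcal I_{\clf}\subset\mathcal I_{\fl}$ and Proposition~\ref{prop: flop in ell} already shows that $\mathcal I_{\fl}$ is contained in $\ker\phi_E$, the elliptic genus descends to a well-defined $\QQ$-algebra map
\[
\bar\phi_E:\MGL^*_\QQ(k)/\mathcal I_{\clf}\longrightarrow \QQ(\!(e^{2\pi iz})\!)[\![e^{2\pi i\tau},\tfrac{k}{2\pi i}]\!].
\]

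Next I would cut down the source. Since $\MGL^*_\QQ(k)\cong \Laz_\QQ$ is a polynomial ring with one generator in each negative degree, and since in each degree $-n$ for $n=1,2,3,4$ the class $[\PP^n]$ is indecomposable in $\Laz_\QQ$, inspection of the explicit formulas for $W_1,W_2,W_3,W_4$ (each of which equals a nonzero rational multiple of $[\PP^n]$ plus decomposable classes) shows that $W_1,W_2,W_3,W_4$ are indecomposable and can be chosen as polynomial generators of $\Laz_\QQ$ in degrees $-1,-2,-3,-4$. By the proposition preceding this one, $\mathcal I_{\clf}$ contains polynomial generators $x_n$ in each degree $-n$ with $n\ge 5$. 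Completing $W_1,W_2,W_3,W_4$ together with these $x_n$ to a set of polynomial generators gives
\[
\MGL^*_\QQ(k)\;\cong\;\QQ[W_1,W_2,W_3,W_4,x_5,x_6,\dots],
\]
so the quotient $\MGL^*_\QQ(k)/\mathcal I_{\clf}$ is a quotient of $\QQ[W_1,W_2,W_3,W_4]$.

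Finally I would use the explicit computations above. The formulas for $K_1,\dots,K_4$ show $\phi_E(W_1)=A$, $\phi_E(W_2)=B$, $\phi_E(W_3)=C$, $\phi_E(W_4)=D$, and the dictionary $B=24a_2$, $C=a_3$, $D=6a_2^2-a_4$, together with $A$ involving $k/(2\pi i)$ nontrivially (proportional to $a_1$), is triangular in the $a_i$ and hence defines an isomorphism $\QQ[A,B,C,D]\xrightarrow{\sim}\QQ[a_1,a_2,a_3,a_4]$. The algebraic independence of $A,B,C,D$ — visible from the fact that $A$ is the unique generator carrying the variable $k$, while $B,C,D$ are algebraically independent elements of $\QQ(\!(y)\!)[\![q]\!]$ (a classical calculation in H\"ohn's thesis, reproduced in the passage preceding the proposition) — says that the composite
\[
\QQ[W_1,W_2,W_3,W_4]\twoheadrightarrow \MGL^*_\QQ(k)/\mathcal I_{\clf}\xrightarrow{\bar\phi_E}\QQ[a_1,a_2,a_3,a_4]
\]
is injective, and it is surjective by construction. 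Hence both arrows in the composite are isomorphisms, and $\bar\phi_E$ gives the claimed identification.

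The only substantive issue is the middle step: that $W_1,W_2,W_3,W_4$ genuinely span the quotient, which rests on the preceding proposition producing polynomial generators $x_n\in\mathcal I_{\clf}$ for all $n\ge 5$. That is where the real work lives (and is carried out in the appendix via Totaro's construction); once it is granted, the identification with $\QQ[a_1,a_2,a_3,a_4]$ is a short exercise in comparing indecomposables degree by degree, and I do not expect any further obstacle.
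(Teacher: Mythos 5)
Your proposal is correct and follows essentially the same route as the paper: the paper's proof \emph{is} the preceding discussion (the appendix proposition supplying generators $x_n\in\sI_\clf$ for $n\ge 5$, the computation $\phi(W_i)=A,B,C,D$ via the $K_i$, and the algebraic independence of $A,B,C,D$ deduced from $B=24a_2$, $C=a_3$, $D=6a_2^2-a_4$ and the fact that only $A$ involves $k/2\pi i$), assembled exactly as you assemble it. Your explicit sandwich argument with $\QQ[W_1,W_2,W_3,W_4]\twoheadrightarrow \MGL^*_\QQ(k)/\sI_\clf\to\QQ[a_1,a_2,a_3,a_4]$ is a clean way to phrase the "summarizing all the above" step, and your identification of the appendix proposition as the real content is accurate.
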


This implies the following corollary.
\begin{corollary}\label{cor: injModFlop}
The natural ring homomorphism $\MGL_\QQ^*(k)/\sI_\clf\to \Ell_\QQ^*(k)$ is injective and $\sI_\clf=\sI_\fl$.
\end{corollary}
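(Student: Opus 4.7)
The plan is to assemble the three ingredients that are already in place: the inclusion $\sI_\clf\subseteq\sI_\fl$ (by definition), the fact from Proposition~\ref{prop: flop in ell} that $\phi_\QQ\colon\MGL^*_\QQ(k)\to\Ell^*_\QQ(k)$ kills $\sI_\fl$, and the explicit description from Proposition~\ref{prop: deg<5} identifying $\MGL^*_\QQ(k)/\sI_\clf$ with the polynomial ring $\QQ[a_1,a_2,a_3,a_4]$ inside $\QQ(\!(e^{2\pi iz})\!)[\![e^{2\pi i\tau},\tfrac{k}{2\pi i}]\!]$. Putting these together, $\phi_\QQ$ admits the factorization
\[
\MGL^*_\QQ(k)/\sI_\clf \;\xrightarrow{\;\pi\;}\; \MGL^*_\QQ(k)/\sI_\fl \;\xrightarrow{\;\bar\phi_\QQ\;}\; \Ell^*_\QQ(k),
\]
where $\pi$ is the canonical surjection and $\bar\phi_\QQ$ is induced from $\phi_\QQ$.

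Now I would invoke the elementary principle that if a composition is injective then its first factor is injective. The composition $\bar\phi_\QQ\circ\pi$ is precisely the map studied in Proposition~\ref{prop: deg<5}, and there it is shown to be an isomorphism onto $\QQ[a_1,a_2,a_3,a_4]\subseteq\Ell^*_\QQ(k)$; in particular it is injective. Hence $\pi$ is injective. But $\pi$ is also surjective by construction, so $\pi$ is an isomorphism, which forces $\sI_\clf=\sI_\fl$. Injectivity of $\MGL^*_\QQ(k)/\sI_\clf\to\Ell^*_\QQ(k)$ is then literally the injectivity statement of Proposition~\ref{prop: deg<5}.

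There is essentially no obstacle here: all the real content (the vanishing of flop differences under $\phi_\QQ$, and the degree-by-degree analysis showing $\sI_\clf$ already cuts out the right polynomial quotient) has been handled earlier in the paper. The corollary is a purely formal consequence of the factorization, extracted by the observation that a surjection whose composition with another map is injective must itself be injective. The only thing worth flagging is that one should verify the induced map $\bar\phi_\QQ$ is well-defined on $\MGL^*_\QQ(k)/\sI_\fl$ (which is exactly what Proposition~\ref{prop: flop in ell} provides) so that the factorization is legitimate.
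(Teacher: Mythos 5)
Your proposal is correct and matches the paper's proof: both derive injectivity of $\MGL^*_\QQ(k)/\sI_\clf\to\Ell^*_\QQ(k)$ from Proposition~\ref{prop: deg<5}, and both conclude $\sI_\clf=\sI_\fl$ from the factorization through $\MGL^*_\QQ(k)/\sI_\fl$ provided by Proposition~\ref{prop: flop in ell}, using the elementary fact that a surjection whose composition with another map is injective is itself an isomorphism.
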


\begin{proof}[Proof of the corollary, assuming the proposition] The first assertion follows immediately from proposition~\ref{prop: deg<5}; the second from the first, noting that $\MGL_\QQ^*(k)/\sI_\clf\to \Ell_\QQ^*(k)$ factors through the quotient $\MGL_\QQ^*(k)/\sI_\fl$ by proposition~\ref{prop: flop in ell}.
\end{proof}

\begin{proof}[Proof of proposition~\ref{prop: deg<5}]
Following H\"ohn \cite{H}, we define the four elements $W_i$, $i=1, 2, 3, 4$ in $\MGL^*(k)$ via  their Chern numbers as follows,
\begin{align*}
&c_1[W_1]=2;\\
&c_1^2[W_2]=0, c_2[W_2]=24;\\
&c_1^3[W_3]=0, c_1c_2[W_3]=0, c_3[W_3]=2;
\\
&c_1^4[W_4]=0, c_1^2c_2[W_4]=0, c_2^2[W_4]=2, c_1c_3[W_4]=0, c_4[W_4]=6.
\end{align*}
In fact, solving a system of linear equations, one can write down $W_i$ explicitly as rational linear combinations of products of projective spaces (in $\MGL^*(k)$).
\begin{align*}
W_1&:=[\PP^1];\\
W_2&:=-16[\PP^2]+18[\PP^1\times \PP^1];\\
W_3&:=\frac{3}{2}[\PP^3]-4[\PP^2\times \PP^1]+\frac{5}{2}[(\PP^1)^3];\\
W_4&:=-4[\PP^4]+12.5[\PP^3\times \PP^1]+6[\PP^2\times \PP^2]-26[\PP^2\times (\PP^1)^2]+11.5(\PP^1)^4.
\end{align*}

Write the Hirzebruch characteristic power series as $Q(t)=1+f_1t+f_2t^2+\cdots$.  It follows easily from the fact that $\sigma(z)$ is an odd function with $\sigma'(0)=1$ that $f_1=k$. Let $A$, $B,$ $C$, and $D$ be such that
\begin{align*}
f_1&=\frac{1}{2}A;\\
f_2&={\frac{1}{2^4\cdot3}}(6A^2-B);\\
f_3&={\frac{1}{2^5\cdot3}}(2A^3- AB + 16C);\\
f_4&={\frac{1}{2^9 \cdot 3^2 \cdot 5}}(60A^4-60A^2B + 1920AC + 7B^2 -1152D).
\end{align*}
Let $K_i$ denote the homogeneous degree $i$ part of the elliptic genus $\phi$.   A calculation shows (see, e.g., \S2.2 of \cite{H}) that
\begin{align*}
K_1&=\frac{1}{2}Ac_1;\\
K_2&={\frac{1}{2^4\cdot3}}((6A^2-B)c_1^2+2Bc_2);\\
K_3&={\frac{1}{2^5\cdot3}}((2A^3- AB + 16C)c_1^3+(2AB-48C)c_2c_1+48Cc_3);\\
K_4&={\frac{1}{2^9 \cdot 3^2 \cdot 5}}((60A^4-60A^2B + 1920AC + 7B^2 -1152D)c_1^4\\
&\hskip15pt+(24B^2-2304D)c_2^2+(120A^2B-5760AC-28B^2+4608D)c_1^2c_2\\
&\hskip25pt+(5760AC+8B^2-4608D)c_3c_1+(-8B^2+4608D)c_4).
\end{align*}
Therefore, $\phi(W_1)=A=2k$, $\phi(W_2)=B$, $\phi(W_3)=C$ and $\phi(W_4)=D$.

Next, we compare the elements $A, B, C, D$ with the polynomial generators of $\Ell_\QQ^*(k)$ \eqref{eqn:Generators}. 
The same calculations as in  \cite[\S2.5, pg.57-8]{H} (adjusting for our different normalizations)  show that $B=24a_2$, $C=a_3$, and $D=6a_2^2-a_4$, and we have already seen that $A=2k$. Clearly $A, B, C, D\in 
\QQ[a_1, a_2, a_3, a_4]$ generate the entire polynomial ring, so $\phi:\MGL_\QQ^*(k)/\sI_\clf\to R_\QQ$ is surjective. By proposition~\ref{prop:Ideal}, $\MGL_\QQ^*(k)/\sI_\clf$ is a quotient of a weighted polynomial ring $\QQ[x_1, x_2, x_3, x_4]$ with $\deg x_i=-i$, so by reason of dimension, both the quotient map  $\QQ[x_1, x_2, x_3, x_4]\to \MGL_\QQ^*(k)/\sI_\clf$ and  $\phi:\MGL_\QQ^*(k)/\sI_\clf\to R_\QQ$ are isomorphisms, completing the proof.
\end{proof}

\section{Birational symplectic varieties}\label{sec:Birat}
In this section we work over a base field $k$ of characteristic zero.
\subsection{Specialization in algebraic cobordism theory}
We will use the specialization morphism in algebraic cobordism theory. The existence of a specialization morphism in algebraic cobordism theory is folklore; for lack of a reference, we sketch a construction here.
\begin{prop}
Let $C$ be a smooth curve, and $p:\calX\to C$ a smooth projective morphism. Let $o\in C$ be a closed point with fiber $\calX_o$ and $\eta$ be the generic point of $C$ whose fiber is denoted by $\calX_\eta$. Let $i:\calX_o\to\calX$ and $j:\calX_\eta\to\calX$ be the natural embeddings. Then there is a natural morphism $\sigma:\Omega^*(\calX_\eta)\to\Omega^*(\calX_o)$ such that $\sigma\circ j^*=i^*$ where $j^*$ is the pull-back and $i^*$ is the Gysin morphism.
\end{prop}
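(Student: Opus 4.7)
The plan is to construct $\sigma$ using the localization exact sequence for algebraic cobordism together with the observation that the normal bundle of $\calX_o$ in $\calX$ is trivial. Since $C$ is a smooth curve, the closed point $o$ is a Cartier divisor in $C$, and the smoothness of $p$ then implies that $\calX_o = p^{-1}(o)\hookrightarrow \calX$ is a smooth Cartier divisor, so $i$ is a regular closed embedding of codimension one. Because $\calX_\eta$ is only essentially smooth over $k$, I interpret
\[
\Omega^*(\calX_\eta):=\colim_U \Omega^*(\calX_U),
\]
the colimit running over non-empty open subschemes $U\subseteq C\setminus\{o\}$.

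By the localization theorem for algebraic cobordism in characteristic zero (\cite{LM}), for each such $U$ the inclusions $\calX_o\hookrightarrow\calX_{U\cup\{o\}}\hookleftarrow\calX_U$ fit into a right-exact sequence
\[
\Omega^{*-1}(\calX_o)\xrightarrow{i_*}\Omega^*(\calX_{U\cup\{o\}})\xrightarrow{j_U^*}\Omega^*(\calX_U)\to 0.
\]
Given $\alpha\in\Omega^*(\calX_\eta)$, I would choose a representative $\tilde\alpha\in\Omega^*(\calX_U)$ for some $U\subseteq C\setminus\{o\}$, lift via surjectivity of $j_U^*$ to some $\bar\alpha\in\Omega^*(\calX_{U\cup\{o\}})$, and set $\sigma(\alpha):=i^*(\bar\alpha)\in\Omega^*(\calX_o)$.

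Well-definedness reduces to showing $i^*\circ i_* = 0$. By the self-intersection formula in algebraic cobordism, $i^*i_*(\gamma)=c_1^\Omega(N_{\calX_o/\calX})\cdot\gamma$, so it suffices to check that $N_{\calX_o/\calX}$ is trivial. But $\calX_o = p^{-1}(o)$ as Cartier divisors gives $\sO_\calX(\calX_o)\cong p^*\sO_C(o)$, whence
\[
N_{\calX_o/\calX}\cong i^*p^*\sO_C(o)\cong (p\circ i)^*\sO_C(o)\cong \sO_{\calX_o},
\]
since $p\circ i$ factors through the residue field of $o$ and any line bundle on a point is trivial. Independence of the choice of $U$ follows similarly, since further shrinking alters $\bar\alpha$ only by elements in the image of the corresponding $i_*$.

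The identity $\sigma\circ j^* = i^*$ is then tautological: for $\beta\in\Omega^*(\calX)$, restricting $\beta$ to $\calX_{U\cup\{o\}}$ (with $U=C\setminus\{o\}$) provides a global lift of $j^*\beta$, so $\sigma(j^*\beta)=i^*(\beta)$. The main obstacle is the localization exact sequence for $\Omega^*$; granting this deep input from \cite{LM}, which relies on resolution of singularities in characteristic zero, the construction reduces to the triviality of the normal bundle of a fiber of $p$ together with the self-intersection formula.
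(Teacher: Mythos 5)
Your proof is correct and follows essentially the same route as the paper's: define $\Omega^*(\calX_\eta)$ as a colimit over opens of $C$, use the localization right-exact sequence to lift a class to the family, apply $i^*$, and check well-definedness via $i^*\circ i_*=0$, which both you and the authors reduce to the triviality of $\sO_\calX(\calX_o)|_{\calX_o}\cong (p\circ i)^*\sO_C(o)$ (the paper cites \cite[Lemma 3.1.8]{LM} for the self-intersection identity you invoke). The only cosmetic difference is that the paper first replaces $C$ by $\Spec\sO_{C,o}$ before writing the localization sequence, whereas you work at finite levels $U\cup\{o\}$ and verify independence of $U$ explicitly.
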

\begin{proof}
Let $R$ be the local ring of $C$ at $o\in C$.  Although in this case neither $\sX_R$ nor $\sX_\eta$ are $k$-schemes of finite type, they are both projective limits of such, which allows us to define $\Omega^*(\sX_R)$ and $\Omega^*(\sX_\eta)$ as
\[
\Omega^*(\sX_R):=\lim_{0\in U\subset C}\Omega^*(p^{-1}(U));\quad \Omega^*(\sX_\eta):=\lim_{\0\neq U\subset C}\Omega^*(p^{-1}(U)).
\]
Here $U\subset C$ is an open subscheme. We may then replace $C$ with $\Spec R$, $\sX$ with $\sX_R$.

For any integer $n$, there is a localization short exact sequence
\[
\Omega^{n-1}(\calX_o)\xrightarrow{i_*}\Omega^{n}(\calX)\xrightarrow{j^*}\Omega^{n}(\calX_\eta)\to 0.
\]

Let $i^*:\Omega^{n}(\calX)\to\Omega^n(\calX_o)$ be the pull-back. In order to show it factors through $j^*$, it suffices to check that $i^*\circ i_*=0$. This is true since $i^*\circ i_*\cong c_1\sO_{\calX_o}|_{\calX_o}=0$ (see \cite[Lemma 3.1.8]{LM}).
\end{proof}

We note that the specialization map $\sigma:\Omega^*(\calX_\eta)\to\Omega^*(\calX_o)$ is a ring homomorphism, and is natural with respect to pullback and push-forward in the following sense: Let $q:\sY\to C$ be a smooth projective morphism, with $C$ as above,  let $f:\sY\to \sX$ be a morphism over $C$ and let $f_o:\sY_o\to \sX_o$, $f_\eta:\sY_\eta\to \sX_\eta$  be the respective  restrictions of $f$.  Let $\sigma_\sX:\Omega^*(\sX_\eta)\to\Omega^*(\sX_o)$, $\sigma_\sY:\Omega^*(\sY_\eta)\to\Omega^*(\sY_o)$ be the respective specialization maps. Then
\begin{enumerate}
\item $f_o^*\circ \sigma_\sX=\sigma_\sY\circ f_\eta^*$.
\item Suppose $f$ is projective. Then $f_{o*}\circ \sigma_\sY=\sigma_\sX\circ f_{\eta*}$.
\end{enumerate}
Indeed, as the respective restriction maps $j^*$ are surjective, the fact that $\sigma$ is a ring homomorphism and the compatibility (1) follows from the fact that pullback maps are functorial ring homomorphisms. For (2), we note that the diagram
\[
\xymatrix{
\sY_o\ar[r]^{i_\sY}\ar[d]_{f_o}&\sY\ar[d]^f\\
\sX_o\ar[r]_{i_\sX}&\sX
}
\]
is cartesian, and then the compatibility (2) follows from the base-change identity $i_{\sX}^*\circ f_*=f_{o*}\circ i_\sY^*$ and the surjectivity mentioned above.

\subsection{Cobordism ring of birational symplectic varieties}
Consider two birational irreducible symplectic varieties $X_1$ and $X_2$ satisfying the following condition:
There exist smooth projective algebraic varieties $\mathcal{X}_1$ and  $\mathcal{X}_2$, flat over a smooth quasi-projective curve $C$ with a closed point $o\in C$, such that:

\begin{romenum}
  \item \label{cond:symplectic1} the fiber of $\mathcal{X}_i$ over $o$ is $(\mathcal{X}_i)_o=X_i$;
  \item \label{cond:symplectic2} there is an isomorphism $\Psi: (\mathcal{X}_1)_{C\setminus \{o\}} \to (\mathcal{X}_2)_{C\setminus \{o\}}$ over $C$.
\end{romenum}

The counterpart of the following Proposition in Chow theory is proved in \cite{G13}.
Let $\Omega^*$ be the algebraic cobordism with $\Z$ coefficient. When the base field $k$ has characteristic zero, then $\Omega^*=\MGL^*$.
\begin{prop}\label{prop:SympBirat}
Let $X_1$ and $X_2$ be two birational symplectic varieties satisfying conditions \eqref{cond:symplectic1} and \eqref{cond:symplectic2}. The deformation data induce an isomorphism
\[
\Omega^*(X_1) \cong \Omega^*(X_2).
\]
\end{prop}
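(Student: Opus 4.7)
The plan is to bridge $\Omega^*(X_1)$ and $\Omega^*(X_2)$ via their generic fibers, using the specialization morphism from the preceding proposition together with the isomorphism $\Psi$. By functoriality of $\Omega^*$ under pullback and passage to the colimit defining $\Omega^*$ of the generic fiber, the isomorphism $\Psi|_{C\setminus\{o\}}$ induces a ring isomorphism
\[
\Psi^*:\Omega^*((\calX_2)_\eta)\xrightarrow{\cong}\Omega^*((\calX_1)_\eta).
\]
Combined with the specialization morphisms $\sigma_i:\Omega^*((\calX_i)_\eta)\to\Omega^*(X_i)$ for $i=1,2$, the target is the composite
\[
\Phi:=\sigma_2\circ(\Psi^*)^{-1}\circ\sigma_1^{-1}:\Omega^*(X_1)\to\Omega^*(X_2),
\]
provided $\sigma_1$ and $\sigma_2$ are bijective.

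The core technical step is therefore to show that each $\sigma_i$ is an isomorphism of rings. For \emph{surjectivity}, given $\alpha\in\Omega^*(X_i)$ represented by $[f:Y\to X_i]$ with $Y$ smooth projective, I would construct an extension $\tilde f:\tilde Y\to\calX_i$ with $\tilde Y$ smooth projective over an open neighborhood of $o$ in $C$ and $\tilde f|_{X_i}=f$, possibly after modifying $f$ within its cobordism class using the double-point relations of Section~\ref{sec:CobordFlop}; characteristic zero allows Hironaka's resolution to smooth out any singularities arising in the extension. Then $\sigma_i(j_i^*[\tilde Y\to\calX_i])=i_i^*[\tilde Y\to\calX_i]=[Y\to X_i]$.

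For \emph{injectivity}, the three-term localization sequence from the preceding proposition needs to be extended to a four-term exact sequence
\[
\Omega^{n-1}((\calX_i)_\eta)\xrightarrow{\partial}\Omega^{n-1}(X_i)\xrightarrow{i_{i*}}\Omega^{n}(\calX_i)\xrightarrow{j_i^*}\Omega^n((\calX_i)_\eta)\to 0,
\]
using the Gysin triangle for the closed immersion $X_i\hookrightarrow\calX_i$, which is available via the $\MGL$-representability of $\Omega^*$ in characteristic zero. Since the normal bundle $N_{X_i/\calX_i}$ is trivial (as $X_i$ is the fiber of a smooth map over a point of the smooth curve $C$), the connecting maps admit an explicit description, and one obtains $\ker(i_i^*)=\operatorname{image}(i_{i*})$. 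Hence if $\beta\in\ker\sigma_i$, a lift $\tilde\beta\in\Omega^*(\calX_i)$ with $j_i^*\tilde\beta=\beta$ satisfies $i_i^*\tilde\beta=\sigma_i(\beta)=0$, so $\tilde\beta\in\operatorname{image}(i_{i*})$ and therefore $\beta=j_i^*\tilde\beta=0$.

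The hardest part will be this injectivity of $\sigma_i$: surjectivity admits a fairly elementary extension-of-cobordism-cycles argument, whereas injectivity requires promotion of the three-term localization to a longer Gysin sequence along with a careful identification of the connecting maps in the motivic setting. Once $\sigma_i$ are both shown to be isomorphisms, the map $\Phi$ is automatically a ring isomorphism with inverse $\sigma_1\circ\Psi^*\circ\sigma_2^{-1}$, and the compatibilities of the specialization map with $f^*$ and $f_*$ recorded after the preceding proposition guarantee that $\Phi$ is a genuine morphism of oriented cohomology structures.
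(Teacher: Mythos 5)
Your strategy stands or falls with the claim that the specialization maps $\sigma_i:\Omega^*((\calX_i)_\eta)\to\Omega^*(X_i)$ are isomorphisms, and that claim is false in general; nothing in the hypotheses \eqref{cond:symplectic1}--\eqref{cond:symplectic2} rescues it. Since $\sigma_i\circ j_i^*=i_i^*$ with $j_i^*$ surjective, surjectivity of $\sigma_i$ is equivalent to surjectivity of the restriction $i_i^*:\Omega^*(\calX_i)\to\Omega^*(X_i)$, which already fails on $\CH^1$ for families whose special fiber acquires extra divisor classes (Noether--Lefschetz type jumps); an isomorphism $\Omega^*\cong\CH^*\otimes_{\Z}\Laz$ of the associated graded shows the same failure for $\Omega^*$. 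Injectivity fares no better: your argument needs $\ker(i_i^*)=\operatorname{image}(i_{i*})$, but the Gysin/localization triangle for $X_i\hookrightarrow\calX_i$ relates $i_{i*}$, $j_i^*$ and a boundary map -- the pullback $i_i^*$ does not appear in it, and the inclusion $\operatorname{image}(i_{i*})\subseteq\ker(i_i^*)$ (which is all the paper proves, via $i^*i_*=c_1(\sO_{X_i})=0$) is strict in general. Concretely, for a constant family $\calX=X\times C$ any class supported over a point $o'\neq o$ lies in $\ker(i^*)$ but not in $\operatorname{image}(i_*)$; and for $\CH_0$ of a non-isotrivial abelian or elliptic fibration, distinct sections with the same value at $o$ give nonzero classes killed by specialization. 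The surjectivity step is also not established: a projective $f:Y\to X_i$ has no canonical extension to a family over a neighborhood of $o$, since $\calX_i|_U$ is not a product. So the proposed composite $\sigma_2\circ(\Psi^*)^{-1}\circ\sigma_1^{-1}$ is not defined.

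The paper's proof is designed precisely to avoid inverting $\sigma_i$. It specializes not elements but the \emph{correspondence}: setting $Z:=(\sigma_1\times\sigma_2)([\Delta])\in\Omega^*(X_1\times X_2)$, where $\Delta$ is the graph of $\Psi$ on the generic fibers, one gets maps $[Z]:\alpha\mapsto p_{2*}(p_1^*\alpha\cap Z)$ and $[Z^{\op}]$ in the other direction. The identity $[Z^{\op}]\circ[Z]=\mathrm{id}$ is then obtained by writing the composite of correspondences as $p_{13*}(p_{12}^*Z\cap p_{23}^*Z^{\op})$ and applying the specialization map to the trivially true identity $\widetilde{p}_{13*}(\widetilde{p}_{12}^*\Delta\cap\widetilde{p}_{23}^*\Delta^{\op})=\Delta_{(\calX_1)_\eta}$ on the generic fibers. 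This uses only the existence of $\sigma$ and its compatibility with smooth pullback, projective pushforward and exterior products -- exactly the properties recorded after the construction of $\sigma$ -- and no injectivity or surjectivity of specialization. If you want to salvage your write-up, replace the attempted inversion of $\sigma_i$ by this correspondence argument.
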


It was proved in \cite{Huy97} that the above conditions \eqref{cond:symplectic1} and \eqref{cond:symplectic2} hold, when:
\begin{itemize}
  \item either $X_1$ and $X_2$ are connected by a general Mukai flop,
  \item or $X_1$ and $X_2$ are isomorphic in codimension two, that is, there exist isomorphic open subsets $U_1\subset X_1$, and $U_2\subset X_2$ with
  $\codim_{X_i}(X_i\setminus U_i)\geq 3$, for $i=1, 2$.
\end{itemize}

The proof follows the same idea as in \cite{G13}, nevertheless, for the convenience of the readers, we include the proof.

Let $\sigma_i:\Omega^*(\mathcal{X}_{i \eta})\to\Omega^*(\mathcal{X}_{i o})$ be the specialization map, where $i=1, 2$. Let $\Delta\subset (\mathcal{X}_{1 \eta}\times \mathcal{X}_{2 \eta})$ be the diagonal, or the graph of the isomorphism $\Psi$ as in condition~\eqref{cond:symplectic2} restricted to the generic fiber.
We define an element \[Z:=(\sigma_1\times \sigma_2)([\Delta])\in \Omega^*(X_1\times X_2).\]
The projection $X_1\times X_2\to X_i$ for $i=1$, $2$ is denoted by $p_i$.
The element $Z\in\Omega^*(X_1\times X_2)$ defines a map $[Z]: \Omega^*(X_1)\to \Omega^*(X_2)$ by $\alpha\mapsto p_{2*}( p_1^*(\alpha)\cap Z)$. By symmetry, we also have a map $[Z^{op}]: \Omega^*(X_2)\to \Omega^*(X_1)$, given by $\beta\mapsto p_{1*}( p_2^*(\beta)\cap Z^{op})$, where $Z^{op}\in \Omega^*(X_2\times X_1)$ is the image of $Z$ by the symmetry morphism $X_1\times X_2\to X_2\times X_1$.

We summarize the notations in the following diagram:
\begin{equation}\label{dia:sympl}
\xymatrix{
& & X_1\times X_2\times X_1 \ar[lld]_{p_{12}}\ar[d]^{p_{13}}\ar[rrd]^{p_{23}}&&\\
X_1\times X_2 \ar[rd]_{p_1}\ar[drr]|-(.35){p_2} |(.63)\hole |(.67)\hole&& X_1\times X_1\ar[ld]_{pr_1}\ar[rd]^{pr_2}&&X_2\times X_1
\ar[ld]^{p_1}\ar[dll]|-(.35){p_2}|(.63)\hole |(.67)\hole\\
& X_1 &X_2& X_1 &\\
}\end{equation}

Now we check that $[Z^{op}]\circ [Z]=1$, i.e., for any $\alpha\in\Omega^*(X_1)$ we have \[p_{1*}\bigg( p_2^*\big(p_{2*}( p_1^*\alpha\cap Z)\big)\cap Z^{op}\bigg)=\alpha.\] The diagonal in $X_i\times X_i$ will be denoted by $\Delta_{X_i}$. Similarly we have $\Delta_{\mathcal{X}_{i\eta}}\subseteq \mathcal{X}_{i\eta}\times\mathcal{X}_{i\eta}$ as the diagonal.
We have:
\begin{align*}
p_{1*}\bigg( p_2^*\big(p_{2*}( p_1^*\alpha\cap Z)\big)\cap Z^{op}\bigg)
&=p_{1*}\bigg(\big( p_{23*}\big(p_{12}^*( p_1^*\alpha\cap Z)\big)\big)\cap Z^{op}\bigg)\\
&=(p_{1}p_{23})_*\bigg((p_{13} pr_1)^*\alpha\cap p_{12}^*Z \cap p_{23}^*Z^{op}\bigg)\\
&=(pr_2)_*\bigg(pr_1^*\alpha\cap p_{13*}\big(p_{12}^*Z \cap p_{23}^*Z^{op}\big)\bigg)\\
&=(pr_2)_*(pr_1^*\alpha\cap \Delta_{X_1})\\
&=\alpha.
\end{align*}
These  equalities are all  obvious, with the exception of the fourth one, which follows from the following lemma.
\begin{lemma} Notations as in the diagram, we have:
\[p_{13*}(p_{12}^*Z \cap p_{23}^*Z^{op})=\Delta_{X_1}\in \Omega(X_1\times X_1).\]
\end{lemma}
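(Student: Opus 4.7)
The plan is to lift the identity to the generic fibers of $\sX_1,\sX_2$ over $C$, where $\Delta$ is literally the graph of the isomorphism $\Psi$, verify the identity there via correspondence composition, and then descend to $X_1\times X_1$ using the compatibility of the specialization map with pullback and pushforward recorded in the previous subsection.

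Denote by $\tilde p_{ij}$ the projections from the $C$-fiber product $\sX_1\times_C\sX_2\times_C\sX_1$ to the corresponding pairwise $C$-fiber products, and write $\tilde\Delta\in\Omega^*(\sX_{1\eta}\times_\eta\sX_{2\eta})$ and $\tilde\Delta^{op}\in\Omega^*(\sX_{2\eta}\times_\eta\sX_{1\eta})$ for the fundamental classes of the graph of $\Psi$ and of $\Psi^{-1}$ respectively. The first step is to establish, on the generic fiber, the correspondence-composition identity
\[
\tilde p_{13\eta *}\bigl(\tilde p_{12\eta}^* \tilde\Delta \cap \tilde p_{23\eta}^* \tilde\Delta^{op}\bigr) = [\Delta_{\sX_{1\eta}}].
\]
Because $\Psi$ is an isomorphism of smooth projective $\eta$-varieties, the scheme-theoretic intersection $\tilde p_{12\eta}^{-1}(\Gamma_\Psi)\cap \tilde p_{23\eta}^{-1}(\Gamma_{\Psi^{-1}})$ is the smooth $n$-dimensional locus $\{(x,\Psi(x),x)\}$ of the expected codimension $2n$, hence the intersection is transverse and the refined pullback agrees with the intersection product in $\Omega^*$; the projection $\tilde p_{13}$ then maps this locus isomorphically onto $\Delta_{\sX_{1\eta}}$, giving the identity.

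The second step is to apply the specialization map $\sigma:\Omega^*(\sX_{1\eta}\times_\eta\sX_{1\eta})\to\Omega^*(X_1\times X_1)$ to both sides. By the compatibility of $\sigma$ with smooth pullback along $\tilde p_{12}$ and $\tilde p_{23}$ and with projective pushforward along $\tilde p_{13}$, together with the very definition $Z=\sigma(\tilde\Delta)$, $Z^{op}=\sigma(\tilde\Delta^{op})$, the specialization of the left-hand side is precisely $p_{13*}(p_{12}^*Z\cap p_{23}^*Z^{op})$. For the right-hand side, the relative diagonal $\Delta_{\sX_1/C}\subset \sX_1\times_C\sX_1$ is smooth over $C$ with generic fiber $\Delta_{\sX_{1\eta}}$ and special fiber $\Delta_{X_1}$; unwinding the construction of $\sigma$ via the localization sequence, and using $i^*[\Delta_{\sX_1/C}]=[\Delta_{X_1}]$, yields $\sigma([\Delta_{\sX_{1\eta}}])=[\Delta_{X_1}]$. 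Combining the two sides gives the lemma.

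The main obstacle is the correspondence-composition identity of the first step: in Chow theory it is routine, but here one must rely on the base-change and projection-formula axioms for $\Omega^*$, with the transversality furnished by $\Psi$ being an isomorphism to ensure that the set-theoretic computation reflects the class-theoretic one. Everything after this step is formal, being a direct consequence of the recorded naturality of the specialization morphism.
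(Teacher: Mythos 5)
Your proposal follows the same route as the paper's own proof: establish the correspondence-composition identity over the generic fiber of the family (where it reduces to the standard fact that the graph of an isomorphism composed with the graph of its inverse is the diagonal), then apply the specialization morphism to both sides using its compatibility with smooth pullback, projective pushforward and products, together with $Z = \sigma(\tilde\Delta)$, $Z^{op}=\sigma(\tilde\Delta^{op})$ and $\sigma([\Delta_{\sX_{1\eta}}]) = [\Delta_{X_1}]$. The paper states the generic-fiber identity and the diagonal specialization without elaboration; you supply the transversality and relative-diagonal justifications, which is exactly what should be filled in, so the argument is correct and matches the paper's approach.
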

\begin{proof}
Consider the same diagram as \eqref{dia:sympl} with $X_i$ replaced by $\mathcal{X}_{i\eta}$. We make the convention here in the proof that for any map $p$ in diagram~\eqref{dia:sympl}, the corresponding map for generic fibers will be denoted by $\widetilde{p}$.
Note that
\[\widetilde{p_{13}}_*(\widetilde{p_{12}}^*\Delta \cap \widetilde{p_{23}}^*\Delta^{op})=\Delta_{\mathcal{X}_{1\eta}} \in \Omega((\mathcal{X}_1)_{\eta}\times (\mathcal{X}_1))_{\eta}.\]
Applying the specialization map $\sigma$ on both sides, we get:
\[p_{13*}(p_{12}^*Z \cap p_{23}^*Z^{op})=\Delta_{X_1}\in \Omega(X_1\times X_1).\]
\end{proof}

Let $\pi_i: \Omega^*(X_i)\to \Omega^*(k)$ be the structure map of $X_i \to k$, where $i=1, 2$.
\begin{prop}\label{prop:Birat}
Let $[Z]: \Omega^*(X_1)\to \Omega^*(X_2)$ be the map constructed as above. We have,
\begin{enumerate}
  \item $[Z](1_{X_1})=1_{X_2},$
  \item $\pi_{1*}(1_{X_1})=\pi_{2*}(1_{X_2})$.
\end{enumerate}
\end{prop}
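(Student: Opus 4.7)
Both parts will be deduced from the naturality of the specialization map with respect to projective pushforward, applied to the smooth projective family $\mathcal{X}_1 \times_C \mathcal{X}_2 \to C$ at the closed point $o \in C$. Write $\sigma \colon \Omega^*(\mathcal{X}_{1\eta} \times \mathcal{X}_{2\eta}) \to \Omega^*(X_1 \times X_2)$ for the resulting specialization map. The central object $Z = \sigma([\Delta])$ is the specialization of the graph of the generic-fiber isomorphism $\Psi$, and this identification together with the naturality properties of $\sigma$ proved earlier will do all the work.

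For part (1), I would first use that pullback preserves units, $p_1^*(1_{X_1}) = 1_{X_1 \times X_2}$, together with the projection formula, to reduce the claim $[Z](1_{X_1}) = 1_{X_2}$ to showing $p_{2*}(Z) = 1_{X_2}$. Since $p_2 \colon \mathcal{X}_1 \times_C \mathcal{X}_2 \to \mathcal{X}_2$ is projective over $C$, naturality of specialization with respect to pushforward gives
\[
p_{2*}(Z) \;=\; p_{2*}\bigl(\sigma([\Delta])\bigr) \;=\; \sigma_2\bigl(\tilde{p}_{2*}[\Delta]\bigr),
\]
where $\tilde{p}_2$ is the restriction of $p_2$ to generic fibers. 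Because $\Delta \subset \mathcal{X}_{1\eta} \times \mathcal{X}_{2\eta}$ is the graph of the isomorphism $\Psi$, the map $\tilde{p}_2 \colon \Delta \to \mathcal{X}_{2\eta}$ is itself an isomorphism, hence $\tilde{p}_{2*}[\Delta] = 1_{\mathcal{X}_{2\eta}}$. Applying $\sigma_2$, which is a ring homomorphism and therefore sends the unit to the unit, yields $p_{2*}(Z) = 1_{X_2}$.

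For part (2), I would apply $\pi_{2*}$ to both sides of the identity in (1) and use functoriality of pushforward to obtain $\pi_{2*}(1_{X_2}) = \pi_{2*} p_{2*}(Z) = \pi_{X_1 \times X_2,\,*}(Z)$. Running the symmetric argument with the roles of $X_1$ and $X_2$ interchanged gives $[Z^{\op}](1_{X_2}) = 1_{X_1}$, and hence $\pi_{1*}(1_{X_1}) = \pi_{X_2 \times X_1,\,*}(Z^{\op})$. The swap isomorphism $\tau \colon X_1 \times X_2 \xrightarrow{\sim} X_2 \times X_1$ sends $Z$ to $Z^{\op}$ by the very definition of $Z^{\op}$ and intertwines the two structure maps to $\Spec k$, so the two right-hand sides agree, proving (2).

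The argument is essentially formal bookkeeping: everything reduces to naturality of $\sigma$ with respect to pushforward, the projection formula, and the fact that on the generic fiber the graph $\Delta$ projects isomorphically onto either factor. The only mild verification required is that the family $\mathcal{X}_1 \times_C \mathcal{X}_2 \to C$ is smooth and projective (immediate from the assumptions on each $\mathcal{X}_i$), so that the specialization map from its generic to special fiber is available and compatible with the specialization maps for the factors. I do not foresee any genuine obstacle.
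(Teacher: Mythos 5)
Your proof is correct, and part (1) matches the paper's argument closely. The paper asserts the identity
\[
\widetilde{p_{2*}}\bigl(\widetilde{p_1}^*(1_{\mathcal{X}_{1\eta}})\cap\Delta\bigr)=1_{\mathcal{X}_{2\eta}}
\]
and applies specialization; you unpack this identity slightly (via $p_1^*(1)=1$, the projection formula, and the observation that $\Delta$ projects isomorphically onto $\mathcal{X}_{2\eta}$), which is a legitimate elaboration of the same step.

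For part (2) you take a genuinely different (though equally short) route. The paper's argument bypasses the operator $[Z]$ entirely: from the isomorphism $\Psi$ it reads off $\Psi_*(1_{\mathcal{X}_{1\eta}})=1_{\mathcal{X}_{2\eta}}$, deduces $\widetilde{\pi_{1*}}(1)=\widetilde{\pi_{2*}}(1)$ in $\Omega^*(\eta)$, and then specializes this single equality. You instead apply $\pi_{2*}$ to the identity from (1), run the symmetric version $[Z^{\mathrm{op}}](1_{X_2})=1_{X_1}$ (which is warranted, since $\Delta$ is the graph of an isomorphism and hence projects isomorphically onto either factor), and match the two sides via the swap isomorphism $X_1\times X_2\xrightarrow{\sim}X_2\times X_1$, which sends $Z$ to $Z^{\mathrm{op}}$ and commutes with the structure maps to $\Spec k$. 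Both arguments rest on the same ingredient — compatibility of specialization with projective pushforward — so neither is stronger; the paper's version is marginally more economical since it proves (2) without invoking (1), whereas yours makes the logical dependence on (1) explicit.
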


\begin{proof}
For (1) we have
\[
\widetilde{p_{2}}_*(\widetilde{p_1}^*(1_{(\mathcal{X}_1)_{\eta}})\cap \Delta)=
1_{(\mathcal{X}_2)_{\eta}}\in \Omega((\mathcal{X}_2)_{\eta}).
\]
Applying the specialization map $\sigma$ on both sides, we get:
\[
p_{2*}(p_1^*(1_{X_1})\cap Z)=1_{X_2}\in \Omega^*(X_2).
\]

For (2), let $\Psi: \mathcal{X}_{1\eta}\cong \mathcal{X}_{2\eta}$ be the isomorphism as in condition~\eqref{cond:symplectic2},
we have $\Psi_*(1_{\mathcal{X}_{1\eta}})=1_{\mathcal{X}_{2\eta}}$, and $\widetilde{\pi_{1}}_*(1_{\mathcal{X}_{1\eta}}))=\widetilde{\pi_{2}}_*(1_{\mathcal{X}_{2\eta}}))$.
Applying the specialization map, (2) follows.
\end{proof}

For a vector bundle $E$ on some $X\in \Sm_k$, let $c^{i_1,\ldots, i_r}(E)$ denote the product
 $c_{i_1}(E)\cdots c_{i_r}(E)$ in $\CH^*(X)$. For $X$ a smooth projective variety over $k$, the {\em Chern number} $c^I(X)$ associated to an index $I=(i_1,\ldots, i_r)$ with $\sum_ji_j=\dim_kX$ and $i_j>0$ is $\text{deg}_k(c^I(\TT_X))$.

\begin{corollary}\label{cor:ChernNum} Let $X_1$ and $X_2$ be two birational symplectic varieties satisfying conditions \eqref{cond:symplectic1} and \eqref{cond:symplectic2}. Then $X_1$ and $X_2$ have the same Chern numbers.
\end{corollary}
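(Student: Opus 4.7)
The plan is to derive the corollary as a direct consequence of Proposition~\ref{prop:Birat}(2), combined with the classical fact that Chern numbers of a smooth projective variety depend only on its class in algebraic cobordism.

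First, invoking Proposition~\ref{prop:Birat}(2) gives
\[
[X_1] \;=\; \pi_{1*}(1_{X_1}) \;=\; \pi_{2*}(1_{X_2}) \;=\; [X_2] \qquad \text{in } \Omega^*(k) = \MGL^*(k),
\]
which in characteristic zero is canonically identified with the Lazard ring $\Laz$.

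Next I would invoke the unique natural transformation $\Theta \colon \Omega^* \to \CH^*$ of oriented cohomology theories, guaranteed by the universality of $\Omega^*$ in characteristic zero. This $\Theta$ is compatible with projective pushforwards and sends $c^I_\Omega(\TT_X)$ to $c^I_{\CH}(\TT_X)$. Applied to $\pi_X \colon X \to \Spec k$ and the tangent bundle, and using the identification $\Omega^0(k) = \Z = \CH^0(k)$, this yields
\[
c^I(X) \;=\; \pi^{\CH}_{X*}\!\bigl(c^I_{\CH}(\TT_X)\bigr) \;=\; \pi^{\Omega}_{X*}\!\bigl(c^I_{\Omega}(\TT_X)\bigr) \in \Z
\]
for every multi-index $I = (i_1,\dots, i_r)$ with $\sum_j i_j = \dim X$, expressing each Chern number of $X$ purely in terms of cobordism-theoretic data.

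Finally, the right-hand side depends only on $[X] \in \Omega^{-\dim X}(k)$: Chern numbers define $\Z$-linear functionals on $\Laz$. This is the classical Milnor--Thom theorem, transported to our setting via the identification $\Omega^*(k) \cong \MU^{2*}(\pt) \cong \Laz$ available in characteristic zero. Combined with the first step, the equality $[X_1]=[X_2]$ forces $c^I(X_1) = c^I(X_2)$ for every $I$, which is the corollary. The only genuine point requiring comment is this last step --- the well-definedness of Chern numbers as functionals on $\Omega^{-d}(k)$ --- but it is standard in the complex-cobordism literature and transfers formally to algebraic cobordism in characteristic zero; no further algebro-geometric input is needed beyond Proposition~\ref{prop:Birat}.
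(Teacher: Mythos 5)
Your proof is correct and takes essentially the same route as the paper: both deduce $[X_1]=[X_2]$ in $\Omega^*(k)$ from Proposition~\ref{prop:Birat}(2) and then invoke the standard fact that Chern numbers descend to well-defined functionals on $\Omega^{-d}(k)$ (the paper cites this without elaboration, while you spell out the universality of $\Omega^*$ and the natural transformation to $\CH^*$; either is fine).
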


\begin{proof} For an integer $d>0$, let $P_d$ denote the number of partitions of $d$. It is well-known that the function $X\mapsto \prod_I c^I(X)$ on smooth projective irreducible $k$-schemes of dimension $d$ over $k$ descends via the map $X\mapsto [X]\in \Omega^{-d}(k)$ to well-defined   homomorphism $c^{*,d}:\Omega^{-d}(k)\to \Z^{P_d}$. The result is now an immediate consequence of Proposition~\ref{prop:Birat}.
\end{proof}

\appendix
\section{The ideal  generated by differences of flops}
Let $\sI_\clf$ be the ideal in $\MGL^*_\QQ(k)$ generated by those $[X_1]-[X_2]$ with $X_1$ and $X_2$ related by a classical flop.
\begin{prop}
The ideal $\sI_\clf$ in $\MGL^*_\QQ(k)$ contains a system of polynomial generators $x_n$ of $\MGL^*_\QQ(k)$ in all degrees  $n \leq -5$.
\end{prop}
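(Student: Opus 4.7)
The plan is to exhibit, for each integer $n \geq 5$, a pair of smooth projective $n$-folds $X_1, X_2$ related by a classical flop such that the difference $x_n := [X_1]-[X_2] \in \sI_\clf$ is a polynomial generator of $\MGL^*_\QQ(k) \cong \Laz_\QQ$ in degree $-n$. The standard tool is Milnor's $s_n$-number criterion: an element $y \in \Laz_\QQ^{-n}$ is a polynomial generator in degree $-n$ (completing any chosen polynomial basis in lower degrees) if and only if the characteristic number $s_n(y) \in \QQ$ is nonzero, where $s_n$ is the Newton power sum $\sum \xi_i^n$ evaluated on the fundamental class. Crucially, $s_n$ annihilates every decomposable product $[M]\cdot[N]$ in $\Laz^{-n}_\QQ$ with $\dim_k M, \dim_k N > 0$, since then $s_n(T_{M\times N}) = p_M^* s_n(T_M) + p_N^* s_n(T_N)$ lands in codimension $n$ in a product of smaller-dimensional factors, hence vanishes; therefore $s_n$ descends to the indecomposables $\Laz^{-n}_\QQ/(\Laz^{<0}_\QQ)^2$.

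To build the flop, fix $Z := \PP^{n-3}$ and two split rank-$2$ vector bundles $A = \sO \oplus \sO(a)$, $B = \sO \oplus \sO(b)$ on $Z$, where $a, b$ are integer parameters to be chosen. A singular projective $n$-fold $Y$ with singular locus $Z$ whose local structure along $Z$ is Zariski-locally $(Q^3 \times Z, v \times Z)$ is obtained, for example, as the closure in $\PP_Z(A \otimes B^\vee \oplus \sO)$ of the rank-$\leq 1$ locus in the total space of $A \otimes B^\vee$; its two small resolutions $X_1, X_2 \to Y$ then satisfy the hypotheses of a classical flop with the prescribed bundles $A$ and $B$, so that $[X_1]-[X_2] \in \sI_\clf$.

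With the flop in hand, the computation of $[X_1]-[X_2]$ in $\MGL^*_\QQ(k)$ proceeds by applying Proposition~\ref{prop:Diff_Omega} in $\MGL^*(Z)$ and pushing forward along $Z \to \Spec k$ via Quillen's formula. To extract the $s_n$-number it is enough to work modulo the ideal of decomposables in $\Laz^{<0}_\QQ$: the universal formal group law $u +_\Omega v$ differs from $u+v$ by correction terms whose coefficients lie in $\Laz^{<0}$, and each such correction, multiplied by any further factor of negative degree, lies in $(\Laz^{<0}_\QQ)^2$ and is therefore killed by $s_n$. Modulo decomposables the Quillen push-forward collapses to the classical Grothendieck push-forward from a projective bundle, and the whole computation reduces to a polynomial identity in $\CH^*(\PP^{n-3})_\QQ$ depending on $a, b, n$.

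The main obstacle is the concrete verification that, for a suitable choice of $(a,b)$, the resulting $s_n$-number is nonzero for every $n \geq 5$. This is a single polynomial identity obtained by expanding the four terms of Proposition~\ref{prop:Diff_Omega}, simplifying in $\CH^*(\PP^{n-3})_\QQ$, and picking off the $s_n$-contribution via standard Schubert calculus; it mirrors exactly the calculation carried out by Totaro in the complex analytic setting \cite{T}, and the appendix below records a direct transcription suited to our algebraic framework. Once this nonvanishing is confirmed, the elements $x_n := [X_1]-[X_2]$ for $n \geq 5$ furnish the required polynomial generators in $\sI_\clf$; together with the classes $W_1, W_2, W_3, W_4$ constructed above, they give a complete polynomial presentation $\MGL^*_\QQ(k) \cong \QQ[W_1, W_2, W_3, W_4, x_5, x_6, \ldots]$.
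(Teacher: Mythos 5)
Your proposal follows the same strategy as the paper's appendix: reduce the question to the $s_n$-number criterion, construct a classical flop over $Z=\PP^{n-3}$ for any prescribed pair of rank-$2$ bundles, use the double-point/blow-up formula to write $[X_1]-[X_2]$ as a difference of iterated projective bundles, and compute the relevant Chern number by Segr\'e/Grothendieck push-forward. Your observation that $s_n$ kills decomposables, so that one may replace $+_\Omega$ by $+$ and Quillen's formula by the classical projective-bundle formula, is correct and is exactly what the paper implements by passing to Chern roots in $\CH^*$ and invoking the Segr\'e-class lemma. Your construction of $Y$ from the rank-$\le 1$ locus in $\Hom(A,B)$ is a perfectly good alternative to the paper's cone-over-Segre construction in $\PP(A\oplus B\oplus \sO_Z)$; both produce the required local model $(Q^3\times\mathbb A^{n-3},v\times\mathbb A^{n-3})$ Zariski-locally and yield the prescribed exceptional $\PP^1$-bundles up to a twist.

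The genuine gap is that the crucial step is deferred rather than carried out. Everything hinges on exhibiting a \emph{specific} $Z$, $A$, $B$ for which $s_n\bigl(\PP_{\PP(A)}(B\otimes\sO(-1)\oplus\sO)\bigr)-s_n\bigl(\PP_{\PP(B)}(A\otimes\sO(-1)\oplus\sO)\bigr)\neq 0$, and your text only asserts that ``this is a single polynomial identity'' mirroring Totaro, pointing to ``the appendix below''. That identity \emph{is} the proof; nothing before it forces nonvanishing. Moreover, using two free parameters $A=\sO\oplus\sO(a)$, $B=\sO\oplus\sO(b)$ leaves a genuinely messier expansion than the paper's carefully chosen specialization $a_2=b_1=b_2=0$, $a_1=c_1(\sO_{\PP^{n-3}}(1))$, which collapses the sums to a closed form $s^n([X_1]-[X_2])=\tfrac{n^2-3n-2+2(-1)^{n-1}}{2}$. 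One must also confirm the sanity check that this quantity vanishes for $n=3,4$ (it does: the flop construction exists there, but the $s_n$-number is $0$, consistent with the quotient being $\QQ[a_1,\dots,a_4]$) and is nonzero for all $n\ge 5$. Without the explicit specialization and the resulting closed-form evaluation, the argument is a program rather than a proof; with it, the proposal coincides with the paper's.

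One smaller point: the final sentence asserting a polynomial presentation $\MGL^*_\QQ(k)\cong\QQ[W_1,\dots,W_4,x_5,x_6,\dots]$ is correct but not needed for this proposition, and it requires the additional (easy) check that $s_i(W_i)\neq 0$ for $i=1,\dots,4$; the $W_i$ were chosen in the body of the paper for their images under the elliptic genus, not \emph{a priori} as $s_n$-generators.
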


This proposition was originally proved in Section~5 of \cite{T}, using some non-algebraic constructions. Totaro's proof is based on explicit calculations that lend themself to our setting after a slight adjustment.

For a smooth irreducible projective variety $X$ over $k$, we have the characteristic class $s^n(X)=\langle \xi_1^n+\cdots +\xi_n^n,[X]\rangle$, with $\xi_i$ being the Chern roots of the tangent bundle of $X$ (in the Chow ring $\CH^*$). We will use the fact that an element $x$ of $\MGL^{-n}_\QQ(k)$ is a polynomial generator of the ring $\MGL^*_\QQ(k)$ if and only if the Chern number $s^n$ is not zero on $x$ (see e.g., \cite{Adams}).

Following Fulton, we have the $i$th  Segr\'{e} class  of a rank $r$ vector bundle $V$  bundle over a smooth $k$-scheme $X$,   defined as
\[
s_i(V)=\pi_*(u^{i+r-1}),\quad i=0, 1, \ldots,
\] 
where $u=c_1(\sO(1))\in \CH^1(\PP(V))$ and $\pi:\PP(V)\to X$ is the structure morphism. From \cite[Thereom 3.2]{F},  we have
\begin{lemma}\label{lem: segre}
Let $V\to X$ be a vector bundle over a smooth $k$-scheme $X$, let  $s(V)=\sum_is_i(V)$ be the total Segr\'{e} class, $c(V)=\sum_ic_i(V)$ the total Chern class. Then $s(V)=c(V)^{-1}$ in $\CH^*(X)$.
\end{lemma}

Recall that we have shown that, in $\MGL^*(k)$,
\[
[X_1]-[X_2]=[\PP_{\PP(A)} (B\otimes \sO_{\PP(A)}(-1) \oplus \sO)]-[\PP_{\PP(B)} (A\otimes \sO_{\PP(B)}(-1) \oplus \sO)].
\]
For each smooth $Z$ and rank-2 vector bundles $A$ and $B$, there is a pair $X_1$ and $X_2$, related by a classical flop and with exceptional fibers equal to $\PP(A)$ and $\PP(B)$ respectively. Indeed, consider the $\PP^1\times\PP^1$ bundle $q:\PP(A)\times_Z\PP(B)\to Z$, which we embed in the $\PP^3$ bundle $\PP(A\oplus B)\to Z$ via the line bundle  $p_1^*\sO_A(1)\otimes p_2^*\sO_B(1)$.  We then take $Y^0$ to be the affine $Z$ cone in $A\oplus B$ associated to $\PP(A)\times_Z\PP(B)\subset \PP(A\oplus B)$, and $Y$ the closure of $Y^0$ in $\PP(A\oplus B\oplus \sO_Z)$. $Y$ thus contains the $\PP^2$ bundles $P_1:=\PP(A\oplus\sO_Z)$ and $P_2:=\PP(B\oplus\sO_Z)$; we take $X_i\to Y$ to be the blow-up of $Y$ along $P_i$, $i=1, 2$ and $\tilde X$ the blow-up of $Y$ along $Z$.

For each $n\geq5$,  we will find an $(n-3)$-fold $Z$ and rank two vector bundles $A$ and $B$ over $Z$, such that $s^n([\PP_{\PP(A)} (B\otimes \sO_{\PP(A)}(-1) \oplus \sO)])\neq s^n([\PP_{\PP(B)} (A\otimes \sO_{\PP(B)}(-1) \oplus \sO)])$. In fact, we take $Z=\PP^{n-3}$, $A=\sO_Z(1)\oplus \sO_Z$ and $B=\sO_Z^2$.

Set  $h=c_1(\sO_Z(1))$, $v_A:=c_1(\sO_{\PP(A)}(1))$, $v_B=c_1(\sO_{\PP(B)}(1))$, $w_A:=c_1(\sO_{\PP(A\otimes\sO(-1)\oplus \sO)}(1))$,  $w_B=c_1(\sO_{\PP(B\otimes\sO(-1)\oplus 1)}(1))$,  and let  $z_1\cdots,z_{n-3}$ be the Chern roots of  the tangent bundle of $Z=\PP^{n-3}$. Then the Chern roots of the tangent bundle of $\PP_{\PP(A)} (B\otimes \sO_{\PP(A)}(-1) \oplus \sO)$ are $-v_A+w_B, -v_A+w_B, w_B, h+v_A, v_A, z_1, \dots, z_{n-3}$.

For the bundle $\pi_1: \PP_{\PP(A)} (B\otimes \sO_{\PP(A)}(-1) \oplus \sO) \to \PP(A)$, Lemma~\ref{lem: segre} yields
\[
\pi_{1*}(w_B^i)=s_{i-2}(B\otimes \sO_{\PP(A)}(-1) \oplus \sO)=(i-1)v_A^{i-2}.
\]
Similarly, for $\pi_2: \PP(A) \to Z$, we have
\[
\pi_{2*}(v_A^i)=s_{i-1}(A)=\sum_{j=0}^{i-1}(-h)^j.
\]
Using this and the projection formula, we find
\begin{align*}
\pi_{1*}s^n\big([\PP_{\PP(A)} (B\otimes \sO_{\PP(A)}(-1) \oplus \sO)]\big) 
&=\pi_{1*}\left[2(-v_A+w_B)^n\!+\!w_B^n\!+\!(h+v_A)^n\!+\!(v_A)^n\!+\!\sum_{i=1}^{n-3}z_i^n\right]\\
&=2\sum_{i=2}^n{\binom{n}{i}}(-v_A)^{n-i}(i-1)(v_A)^{i-2}+(n-1)(v_A)^{n-2}\\
&=(v_A)^{n-2}(2(-1)^n+n-1).
\end{align*}
According to Lemma~\ref{lem: segre}, $\pi_{2*}(v_A)^{n-2}=(-h)^{n-3}$. Therefore,
\[
\pi_{2*}\pi_{1*}s^n\big(\PP_{\PP(A)} (B\otimes \sO_{\PP(A)}(-1) \oplus \sO) \big)
=h^{n-3}(-2+(-1)^{n-3}(n-1)).
\]

We do the same for the projection $\pi'_1: \PP_{\PP(B)} (A\otimes \sO_{\PP(B)}(-1) \oplus 1) \to \PP(B)$, and $\pi'_2: \PP(B) \to Z$. Using Lemma~\ref{lem: segre}, a calculation  similar to the one above gives
\[
\pi'_{2*}\pi'_{1*}s^n \big(\PP_{\PP(B)} (A\otimes \sO_{\PP(B)}(-1) \oplus 1)\big)
=h^{n-3}(-(n-1)^2+{\binom{n}{2}}+(n-2)(-1)^{n-3})
\]
This gives 
\[
s^n([X_1]-[X_2]) =\frac{n^2-3n-2+2(-1)^{n-1}}{2},
\]
so for $n\geq 5$, $s^n([X_1]-[X_2])\neq 0$, as desired.

\section{$\ell'$-alterations and dualisability (by Jo\"el Riou)}\label{App:B}

\begin{prop}
Let $k$ be a perfect field. Let $\ell$ be a prime number different from the
characteristic of $k$. Then, for any smooth finite type $k$-scheme $U$, the
suspension spectrum $\Sigma^\infty_T U_+$ belongs to the pseudo-abelian
triangulated subcategory of $\SH(k)_{\Z_{(\ell)}}$ generated by the objects
$\Sigma^\infty_T X_+$ where $X$ is projective and smooth over $k$.
\end{prop}

\begin{corollary}\label{cor:Duality}
Let $k$ be a perfect field. Let $p$ denote the caracteristic exponent of
$k$ (i.e., $p>0$ or $p=1$ if the characteristic of $k$ is zero). Then, 
for any smooth finite type $k$-scheme $U$, the
suspension spectrum $\Sigma^\infty_T U_+$ is strongly dualisable in 
$\SH(k)_{\Z[\frac1p]}$.
\end{corollary}

First, we shall see how the corollary follows from the
proposition. The strong dualisability can be formulated using the internal
Hom, which shall be denoted $\SheafHom$ here. An object $A\in
\SH(k)_{\Z[\frac1p]}$ is strongly dualisable if and only if for any object
$M\in \SH(k)_{\Z[\frac1p]}$, the canonical morphism
$\SheafHom(A,\bS_k)\wedge M\to \SheafHom(A,M)$ is an isomorphism. At the
level of stable motivic homotopy sheaves, the localisation functor
$\SH(k)_{\Z[\frac1p]}\to \SH(k)_{\Z_{(\ell)}}$ (for any prime number $\ell$
not dividing $p$) corresponds to the tensor product with $\Z_{(\ell)}$ over
$\Z[\frac1p]$. Then, we see that in order to prove that an object
$A\in\SH(k)_{\Z[\frac1p]}$ is strongly dualisable, it suffices to prove
that its images in all the categories $\SH(k)_{\Z_{(\ell)}}$ (for $\ell$
not dividing $p$) are strongly dualisable. Finally, the corollary follows
from the proposition and the fact that the objects $\Sigma^\infty_T X_+$
are strongly dualisable in $\SH(k)$ if $X$ is projective and
smooth (see \cite[\S2]{Riou} which relies on the work by J.~Ayoub
\cite{Ayoub} and V.~Voevodsky).

In order to prove the proposition, we shall need the following lemma:

\begin{lemma}\label{lemma-degree}
Let $k$ be a field of characteristic $p>0$. Let $\pi\colon V\to U$ be
a finite and
\'etale morphism between connected and smooth $k$-schemes. Let $d$ be the
degree of $\pi$. Then, there exists a dense open subset $U'\subset U$ and a
morphism $s\colon \Sigma^\infty_T U'_+ \to\Sigma^\infty_T V'_+$ in $\SH(k)$
(where $V':=\pi^{-1}(U')$), such that if we denote $\pi'\colon 
\Sigma^\infty_T V'_+ \to\Sigma^\infty_T U'_+$ the induced morphism, the
composition $\pi'\circ s\in\End_{\SH(k)}(\Sigma^\infty_T U'_+)$ can be
written as $\pi'\circ s=d+\alpha$, where $\alpha$ is a nilpotent
endomorphism of $\Sigma^\infty_T U'_+$ in $\SH(k)$.
\end{lemma}

\begin{proof}
We may observe that the map $s$ of the lemma can be obtained by
application of the functor $a_\sharp\colon \SH(U')\to
\SH(k)$ (see \cite[p.~104]{MV}) where $a\colon U'\to \Spec(k)$ is the obvious
morphism. Indeed, passing to the generic point of $U$, we see that
in order to prove the lemma we may assume that $U=\Spec(k)$ and that $V$
is the spectrum of a finite separable field extension $L$ of $k$. The map
$s$ is constructed in \cite[Lemme~1.9]{Riou} and what we know is that the
composition $\pi'\circ s\in \End_{SH(k)}(\bS_k)$ is of the form $d+\alpha$
where $\alpha$ is an endomorphism that vanishes after base change to a big
enough extension of $k$. We now use the isomorphism
$\End_{SH(k)}(\bS_k)\simeq GW(k)$ from \cite[Corollary 1.24 and Remark
1.26]{Morel}. Using this identification, we see that $\alpha$ belongs to
the kernel of
the rank morphism $GW(k)\to \Z$. Then,
we can conclude using the following lemma:
\end{proof}

\begin{lemma}
Let $k$ be a field of characteristic $p>0$. Let $\alpha\in GW(k)$ be an
element in the kernel of the rank morphism $GW(k)\to \Z$. Then, $\alpha$ is
nilpotent in $GW(k)$.
\end{lemma}

\begin{proof}
As the set of nilpotent elements in the commutative ring $GW(k)$ is an
ideal, we may assume $\alpha=\<t\>-1$ where $t\in
k^\times$. We have $(1+\alpha)^2=\<t^2\>=1$, so that $\alpha^2=-2\alpha$. By
induction, we get $\alpha^n=(-2)^{n-1}\alpha$ for $n\geq 1$: we have
to show that $\alpha$ is annihilated by a power of two. If $p=2$,
$2\alpha=0$ holds (see \cite[Lemma~3.9]{Morel}), i.e. $\alpha^2=0$.
Now we assume $p\geq 3$ so that
there is no danger thinking in terms of usual quadratic forms. We first
consider
$\gamma:=\<-1\>-1\in GW(\F_p)$. The quadratic form $-x^2-y^2$ over $\F_p$
represents $1$ (see \cite[Proposition~4, \S{}IV.1.7]{Serre}) so that
$\<-1\>+\<-1\>=\<1\>+\<1\>\in GW(\F_p)$, i.e. $2\gamma=0\in GW(\F_p)$,
which gives $\gamma^2=0$. Let $t\in k^\times$ be any nonzero element in an
extension $k$ of $\F_p$. The quadratic form $q(x,y):=x^2-y^2=(x+y)(x-y)$
represents $t$ (this is $q(\frac{1+t}{2},\frac{1-t}{2})$), which easily
implies that $\<1\>+\<-1\>=\<t\>+\<-t\>$. This is equivalent to saying
$(2+\gamma)\alpha=0\in GW(k)$. It follows that
$4\alpha=(2-\gamma)(2+\gamma)\alpha=0$, and then $\alpha^3=0$.
\end{proof}

\medskip

We can now prove the proposition. It was already proven in the case $k$ is
of characteristic $0$ in \cite{Riou} using Hironaka's resolution of
singularities. In characteristic $p>0$, an argument using de Jong's
alterations also led to the same result with rational coefficients. The
idea was to do an induction on the dimension of the variety $U$, which can
be assumed connected. Then, the property we want to prove becomes a
birational property of $U$, so that we can shrink $U$ if needed. Using the
lemma~\ref{lemma-degree}, we obtain that $\Sigma^\infty_T U'_+$ is a direct
factor of $\Sigma^\infty_T V'_+$ if the degree $d$ has been inverted in the
coefficient ring. If $d$ is invertible in the coefficient ring and if $V'$
is an open subset of a projective and smooth variety, we can deduce the
expected property for $U$. The theorems by de Jong on alterations were
sufficient to conclude in the case of rational coefficients. Here, for
$\Z_{(\ell)}$-coefficients, we have
to ensure that an appropriate alteration can be found with a
prime-to-$\ell$ degree $d$: this is possible thanks to Gabber's improvement
\cite[X~3.5]{Gabber} of de Jong's results.

\newcommand{\arxiv}[1]
{\texttt{\href{http://arxiv.org/abs/#1}{arXiv:#1}}}
\newcommand{\doi}[1]
{\texttt{\href{http://dx.doi.org/#1}{doi:#1}}}
\renewcommand{\MR}[1]
{\href{http://www.ams.org/mathscinet-getitem?mr=#1}{MR#1}}

\end{document}